\def\keywords{\xdef\@thefnmark{}\@footnotetext}
\begin{document}

\onehalfspacing

\title{The classification of automorphisms and quotients \\of Calabi-Yau threefolds of type $A$}

\author{Martina Monti}
\address{Dipartimento di Matematica Federigo Enriques - UNIMI, Via Cesare Saldini, 50, 20133 Milano MI, Italia and  Laboratoire de Mathématiques et Applications - Université de Poitiers, UMR 7348 du CNRS, 11 bd Marie et Pierre Curie, 86073 Poitiers Cedex 9, Frances}
\email{martina.monti1@unimi.it}

\begin{abstract}
The aim of the paper is to investigate
the only two families $\cF^A_{G}$ of Calabi-Yau $3$-folds $A/G$ with $A$ an abelian $3$-fold and $G\le \text{Aut}(A)$ a finite group acting freely: one in constructed in \cite{CD2} and the other is presented here.
We provide a complete classification of the automorphism group of $X\in \cF^A_{G}$. Additionally, we construct and classify the quotients $X/\Upsilon$ for any $\Upsilon\le \text{Aut}(X)$. Specifically, for those groups $\Upsilon$ that preserve the volume form of $X$ then $X/\Upsilon$ admits a desingularization $Y$ which is a Calabi-Yau $3$-fold: we compute the Hodge numbers and the fundamental group of these $Y$, thereby determining all topological in-equivalent Calabi-Yau $3$-folds obtained in this way. 

\end{abstract}

\subjclass[2020]{14E20, 14E30, 14J40, 14K99}%
    \keywords{\emph{Key words and phrases.} Cone Conjecture, Generalized Hyperelliptic Manifolds, Convex Geometry}

\maketitle
\section*{Introduction}
In the last years free quotients of complex tori are not again complex tori, called \emph{Generalized Hyperelliptic Manifolds (GHM)},
have gained significant attention  as they are the natural generalizations of the bi-elliptic surfaces, see \cite{La}, \cite{CC}, \cite{CD2}, \cite{OS}, \cite{HK}, \cite{HL19}, \cite{DG23},  \cite{Cat23}.

We are interested in GHMs that have trivial canonical bundle, specifically in those that Calabi-Yau manifolds. The GHMs that are also Calabi-Yau manifolds were first introduced by Oguiso and Sakurai in \cite{OS}. As the authors observed these Calabi-Yau manifolds have infinite fundamental group, $c_2=0$ and they don't contain rational curves.
Moreover, we observe that they appear only in odd dimension higher than $1$ and the complex torus in always projective, in fact they are called \emph{Calabi-Yau manifolds of type $A$}, see lemma \ref{definitionCYA}.
In \cite{OS} the authors provided a full classification of these manifolds in dimension $3$. There are only two possible groups to construct Calabi-Yau threefolds of type $A$: the group $(\bZ/2\bZ)^2$ and the dihedral group $\fD_4$ of order $8$. In this article we consider the two families $\cF^A_G$ of Calabi-Yau threefolds of type $A$. The main purpose is to classify the automorphism group and the quotients of $A/G=X\in \cF^A_G$ and we mainly focus on the case $G\simeq \fD_4$.

The family $\cF^A_{\fD_4}$ is constructed in \cite{CD2} and using similar techniques we construct the $\cF^A_{(\bZ/2\bZ)^2}$, see Section \ref{section8}. Both the families are irreducible and the space of parameters has dimension equal to $h^{2,1}(X)=h^{1,1}(X)=\begin{cases} 2 & G\simeq \fD_4 \\
3 & G\simeq (\bZ/2\bZ)^2 \end{cases}$ with $X\in \cF^A_{G}$. 
According to a well-known result, see Theorem \ref{aut GHV}, the automorphism group of $X\in \cF^A_{G}$ can be computed by study the normalizer $\text{N}_{\text{Aut}(A)}(G)$. We find that $\text{Aut}(X)$ is finite and contains only elements of order $2$ for every $X\in \cF^A_G$. In particular, we explicitly describe the generators: for $G\simeq \fD_4$ 
see Theorem \ref{mainth1} and for $G\simeq (\bZ/2\bZ)^2$ see Theorem \ref{mainth2}. This knowledge allows us to construct and classify all the possible quotients $Y$ of $X\in \cF^A_{G}$, up to a desingularization.

\begin{mainth}[Theorem \ref{mainth2} and Theorem \ref{mainth8}]
    Let $X\in \cF^A_{G}$ and $\Upsilon\le \textup{Aut}(X)$.
    \begin{romanenumerate}
        \item If $G\simeq \fD_4$ then each quotient $X/\Upsilon$ admits a crepant resolution $\beta\colon Y \rightarrow X/\Upsilon$ where $Y$ is a Calabi-Yau $3$-fold.

        \item If $G\simeq(\bZ/2\bZ)^2$ Let $\beta\colon  Y\rightarrow X/\Upsilon$ be the blow up of the singular locus of $X/\Upsilon$. The followings hold.
        \begin{enumerate}
            \item[a.] If $\Upsilon$ preserves the volume form of $X$, $\beta$ is a crepant resolution and $Y$ is a Calabi-Yau $3$-fold. 
            
            \item[b.] If there exists at least one $\alpha_X \in \Upsilon$ that fixes surfaces on $X$ then $Y$ has negative Kodaira dimension.
            
            \item[c.]  Otherwise, $Y$ has trivial Kodaira dimension and its canonical bundle is not trivial.
            \end{enumerate}
    \end{romanenumerate}    
\end{mainth}

 We are particular interested in quotients $Y$ of $X$ which, up to a resolution of singularities, are Calabi-Yau threefolds. In fact our aim is to classify them. To do this we compute the topological invariants $h^{1,1}(Y)$, $h^{2,1}(Y)$ and $\pi_1(Y)$.
For the case $G\simeq \fD_4$, we summarize our results from Table \ref{table3} to Table \ref{table14}: we find at least $19$ non-homeomorphic Calabi-Yau threefolds constructed in this way.
For the case $G\simeq(\bZ/\bZ)^2$, we compute their Hodge numbers in Proposition \ref{hdgnumbers} and we refer to \cite[Section 1]{DW} for a description of their the fundamental groups.
Thanks to the investigation above, we prove that there exist free quotients of $X\in \cF^A_{G}$ which belong to the same family of $X$. This allow us to define a map from $\cF^A_{G}$ to itself which tell us that we can move in $\cF^A_{G}$ by using quotients and coverings: we analyze the case $G\simeq \fD_4$ obtaining a map of degree $2$, see Section \ref{sec: map on family}.

We explain more in details the structure of the article.
In Section \ref{sec: CYtypeA} we introduce the Calabi-Yau manifolds of type $A$ and we collect some of their properties.
In Section \ref{sec: CY 3fold} we give some preliminaries on Calabi-Yau $3$-folds, specifically on involution on it which will be useful in the subsequent sections. The central part of the article, from Section \ref{sec: familyd4} to Section \ref{sec: map on family}, is dedicated to the study of the family $\cF^A_{\fD_4}$. In the last section we briefly present the results for the the family $\cF^A_{(\bZ/2\bZ)^2}$ by extending the techniques used in the previous sections.

\textbf{Acknowledgements.} This work is part of my PhD thesis and I would like to thank my advisors: Alice Garbagnati for introducing me the subject, for her unrivalled assistance, si\-gnificant discussions and suggestions throughout the redaction; and Alessandra Sarti for her hints during the preparation of this paper.
I would like to thank also Bert van Geemen for his help in the computations of Section \ref{sec: neronseveri}. Moreover, I would like to thank Cécilie Gachet for helpful discussions during my visiting at the “Université Côte d'Azur”. Finally, I would like to thank the Departments of Mathematics of “Università degli Studi di Milano” and “Université de Poitiers” for the fundings; I am also partially funded by the grant “VINCI 2022-C2-62” issued by the “Université Franco-Italienne”.\vspace*{1cm}

\section{Preliminaries on Calabi-Yau manifolds of type $A$}\label{sec: CYtypeA}
\begin{definition}\label{def: GHV}
A \emph{Generalized Hyperelliptic Manifold} (GHM) $Y$ is a manifold not isomorphic to a complex torus but admitting a complex torus $T$ as finite étale Galois cover. 
\end{definition}

\begin{remark}\label{notranslation}
Let $Y=T/G$ be a GHM. By \cite[Remark 1.7]{CD20} we can assume in Definition \ref{def: GHV} that $G$ does not contain translations.
\end{remark}

\begin{definition}\label{def: GHV}
A group $\Gamma \le \textup{Iso}(\bR^n)$ is said to be a \emph{crystallographic group} if it is discrete and $\bR^n/\Gamma$ is compact.
\end{definition}

\begin{remark}\label{exact-sequence}
According to the first theorem of Bieberbach \cite[Theorem 3.2.1]{Wolf11} any $\Gamma$ crystallographic group is characterized by the
    following exact sequence:
   \begin{equation}\label{crystallographic sequence}
   0\longrightarrow \Lambda' \xrightarrow{\quad i\quad} \Gamma \xrightarrow{\quad l\quad} G' \longrightarrow 0
\end{equation}
where $i(\lambda) = (I,\lambda)$ and $l(M,m) = M$
for every $\lambda\in \Lambda'$ and $(M,m)\in \Gamma$. In particular, $\Lambda'=\Gamma\cap \bR^n$ is the maximal abelian and normal subgroup of finite index in $\Gamma$.
\end{remark}

By using standard result of topological covering and the definition of crystallographic group we obtain the following. 

\begin{lemma}\label{fund_group}
    Let $Y=T/G$ be a GHM. Then there exists $\Gamma \le \textup{Iso}(\bC^n) $ such that $\pi_1(Y)\simeq \Gamma$ and $Y=\bC^n/\Gamma$. In particular, $\Gamma$ is a crystallographic group. 
\end{lemma}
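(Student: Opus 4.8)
The plan is to realise $Y$ as a quotient of the universal cover of $T$ and then to identify its deck transformation group with a crystallographic subgroup of $\textup{Iso}(\bC^n)$. First I would fix a uniformisation $T=\bC^n/\Lambda$, where $\Lambda\simeq\pi_1(T)$ is a lattice of rank $2n$ and $\pi\colon\bC^n\to T$ is the universal cover. Since $T\to Y$ is a finite étale Galois cover with group $G$, the composition $\bC^n\xrightarrow{\ \pi\ }T\to Y$ is again a covering map, and because $\bC^n$ is simply connected it is the universal cover of $Y$. Writing $\Gamma$ for its group of deck transformations, one gets $\pi_1(Y)\simeq\Gamma$ and $Y=\bC^n/\Gamma$, together with a short exact sequence
\[
0\longrightarrow\Lambda\longrightarrow\Gamma\longrightarrow G\longrightarrow 0,
\]
in which $\Lambda$ is recovered as the image of $\pi_1(T)$ in $\pi_1(Y)$. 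By Remark \ref{notranslation} I may assume $G$ contains no translations, a fact I would only invoke at the very end to pin down $\Lambda$.

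The key step is to embed $\Gamma$ into $\textup{Iso}(\bC^n)$ rather than merely into the affine group. Every element of $G\le\textup{Aut}(T)$ lifts to a holomorphic automorphism of $\bC^n$, and such automorphisms of a complex torus are affine, i.e.\ of the form $z\mapsto A z+v$ with $A\in GL(n,\bC)$ preserving $\Lambda$ and $v\in\bC^n$; hence every $\gamma\in\Gamma$ is affine. The assignment $\gamma\mapsto A_\gamma$ is a homomorphism whose kernel contains $\Lambda$, so its image is a quotient of $G$ and in particular a \emph{finite} subgroup of $GL(n,\bC)$. Averaging the standard Hermitian form over this finite group yields a flat Hermitian metric $H$ on $\bC^n$ for which all linear parts $A_\gamma$ are unitary; since the translation parts are automatically $H$-isometries, the metric $H$ is $\Gamma$-invariant and $\Gamma\le\textup{Iso}(\bC^n)=\bC^n\rtimes U(n,H)$.

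It then remains to verify the crystallographic conditions. Because $G$ acts freely on $T$ and $\Lambda$ acts freely on $\bC^n$, the group $\Gamma$ acts freely; as $\Lambda$ has finite index $\abs{G}$ in $\Gamma$ and acts properly discontinuously and cocompactly, so does $\Gamma$, whence $\Gamma$ is discrete and $\bC^n/\Gamma=Y$ is compact. This is exactly the defining property of a crystallographic group. Finally, any pure translation of $\Gamma$ projects to a translation of $T$ belonging to $G$, which by the no-translation assumption must be trivial; therefore $\Gamma\cap\bC^n=\Lambda$, so $\Lambda$ is the maximal normal abelian subgroup of finite index and the sequence above coincides with the Bieberbach sequence of Remark \ref{exact-sequence}.

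The only genuinely delicate point is the middle step: upgrading the affine, complex-analytic description of $\Gamma$ to an isometric one. I expect the averaging argument to close this cleanly, the essential input being the finiteness of the group of linear parts, which is where the finiteness of $G$ is used. Everything else is a bookkeeping exercise with covering spaces and the exact sequence.
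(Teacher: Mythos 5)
Your proof is correct. Note, however, that the paper does not actually prove Lemma \ref{fund_group}: it is stated without proof, implicitly resting on the classical theory of torus quotients and Bieberbach groups in the references it cites (\cite{Wolf11}, \cite{CC}, \cite{DG23}). Your argument is the standard one that backs the statement up, and it is complete: covering-space theory identifies $\Gamma$ with $\pi_1(Y)$ and yields the extension $0\to\Lambda\to\Gamma\to G\to 0$; the fact that holomorphic automorphisms of a complex torus are affine makes $\Gamma$ a group of affine transformations of $\bC^n$; Weyl's averaging trick makes the finite group of linear parts unitary; and finite index over $\Lambda$ gives discreteness, while compactness of $Y$ gives cocompactness, which is exactly the paper's definition of a crystallographic group. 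Two points of bookkeeping are worth making explicit. First, averaging produces a $\Gamma$-invariant Hermitian form $H$ that need not be the standard one, so to land literally in $\textup{Iso}(\bC^n)$ (which in this paper, cf.\ Lemma \ref{discoaction}, means isometries of the standard flat metric) you should conjugate $\Gamma$ by a linear map carrying $H$ to the standard form; this replaces $\Gamma$ by an isomorphic group with $\bC^n/\Gamma$ unchanged, so the existence claim is unaffected. Second, your closing observation that the no-translation normalization of Remark \ref{notranslation} is needed only to identify $\Gamma\cap\bC^n$ with $\Lambda$ (and not for the statement itself) is exactly right, and it is consistent with the paper's own footnote in Section \ref{sec: neronseveri}, which points out that for $X=A'/H$, where $H$ contains the translation $w$, the lattice $\pi_1(A')$ is \emph{not} the maximal abelian normal subgroup of finite index in $\Gamma$.
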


\begin{remark}
By \cite[Corollary 4.5]{DG23} we know that the GHM $T/G$ is topologically determined by the group $G$ which acts freely on $T$ and does not contain translations, that is, $G$ characterizes the fundamental group of $T/G$. Hence we may say that $T/G$ is a \emph{GHM with the group $G$}.
\end{remark}

The following result characterizes the automorphism group of a GHM.

\begin{corollary}\label{aut GHV} \textup{\cite[Theorem 4.2]{HL19}}
Let $Y=T/G$ be a \textup{GHM} with the group $G$. Then we have the following epimorphism of groups:
\begin{center}
    \begin{tikzcd}
     \varphi\colon  \textup{N}_{\textup{Aut}(T)}(G)\arrow[r,"\simeq"] &   \textup{Aut}(Y) \\
      \alpha_T \arrow[r,mapsto] & \alpha_Y
    \end{tikzcd}
\end{center}
such that \textup{ker}$(\varphi)=G$ and $\alpha_Y$ makes the following diagram commutative
        \begin{tikzcd}
            T\arrow[r,"\alpha_T"] \arrow[d,"\pi"']& T \arrow[d,"\pi"] \\
            Y \arrow[r, "\alpha_Y"'] & Y
        \end{tikzcd}.
\end{corollary}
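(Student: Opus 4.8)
The plan is to produce the epimorphism and identify its kernel by combining the structure of $Y = T/G = \bC^n/\Gamma$ as a crystallographic quotient with the two Bieberbach theorems cited. First I would observe that, by Lemma \ref{fund_group}, we may realize $Y = \bC^n/\Gamma$ where $\Gamma$ fits into the sequence \eqref{crystallographic sequence} with normal maximal abelian subgroup $\Lambda' = \Gamma \cap \bR^{2n}$ (here $\bC^n \cong \bR^{2n}$), and the lattice $\Lambda'$ is exactly the translation lattice defining $T = \bC^n/\Lambda'$, so that $G' \simeq \Gamma/\Lambda' \simeq G$. Every holomorphic automorphism of $Y$ lifts to a biholomorphism of the universal cover $\bC^n$, and since it must permute the fibers of the covering $\bC^n \to Y$, it normalizes $\Gamma$ inside the affine group; conversely an affine map normalizing $\Gamma$ descends to $Y$. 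The map $\varphi$ is then the restriction of this normalizer construction to those automorphisms coming from $\textup{Aut}(T) = \textup{N}_{\textup{Aff}}(\Lambda')$, i.e.\ affine maps preserving $\Lambda'$.

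Next I would make the diagram
\begin{center}
\begin{tikzcd}
T \arrow[r,"\alpha_T"] \arrow[d,"\pi"'] & T \arrow[d,"\pi"] \\
Y \arrow[r,"\alpha_Y"'] & Y
\end{tikzcd}
\end{center}
the defining condition: an $\alpha_T \in \textup{Aut}(T)$ descends to some $\alpha_Y \in \textup{Aut}(Y)$ precisely when $\alpha_T$ maps $G$-orbits to $G$-orbits, which is equivalent to $\alpha_T\, G\, \alpha_T^{-1} = G$ as subgroups of $\textup{Aut}(T)$, that is, to $\alpha_T \in \textup{N}_{\textup{Aut}(T)}(G)$. This gives a well-defined group homomorphism $\varphi$ from $\textup{N}_{\textup{Aut}(T)}(G)$ to $\textup{Aut}(Y)$, sending $\alpha_T \mapsto \alpha_Y$; functoriality and uniqueness of the descended map (the covering $\pi$ is surjective, so $\alpha_Y$ is determined by $\alpha_Y \circ \pi = \pi \circ \alpha_T$) give that it respects composition.

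To compute the kernel, I note that $\alpha_T \in \ker(\varphi)$ means $\alpha_Y = \textup{id}_Y$, i.e.\ $\pi \circ \alpha_T = \pi$, so $\alpha_T$ moves every point within its own $G$-orbit; since $G$ acts freely and properly discontinuously, a connectedness or continuity argument forces $\alpha_T = g$ for a single fixed $g \in G$, giving $\ker(\varphi) = G$. Surjectivity of $\varphi$ is where the second Bieberbach theorem enters: any $\alpha_Y \in \textup{Aut}(Y)$ lifts to an affine normalizer $\tilde\alpha$ of $\Gamma$ in $\textup{Iso}(\bC^n)$, and because $\tilde\alpha$ normalizes $\Gamma$ it preserves the characteristic subgroup $\Lambda' = \Gamma \cap \bR^{2n}$, hence its linear part descends to an automorphism of $T$ lying in $\textup{N}_{\textup{Aut}(T)}(G)$ and mapping to $\alpha_Y$. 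The main obstacle I anticipate is precisely this surjectivity step: one must verify that an arbitrary holomorphic — not merely topological — automorphism of $Y$ lifts to an \emph{affine} (rather than just a general biholomorphic) self-map of $\bC^n$, which is where the rigidity supplied by Bieberbach's second theorem, together with the fact that holomorphic automorphisms of $\bC^n$ normalizing the cocompact lattice $\Lambda'$ must be affine, does the essential work; everything else reduces to diagram-chasing and the normality of $\Lambda'$ in $\Gamma$.
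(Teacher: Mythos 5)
Your proposal is correct and follows essentially the same route as the paper: lift $\alpha_Y$ to the universal cover $\bC^n$, note that the lift normalizes the crystallographic group $\Gamma$ and hence preserves the translation lattice $\Lambda'$ (being the maximal abelian normal subgroup of finite index, it is characteristic), and conclude that the lift descends to an automorphism of $T$ lying over $\alpha_Y$. You supply details the paper leaves implicit (the well-definedness criterion via $G$-orbits, the connectedness argument identifying $\ker(\varphi)=G$), and the only superfluous worry is the affinity of the lift, which is automatic once it descends to the compact torus and is never actually needed as an input to the descent argument.
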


\begin{remark}
In \cite[pag. 10 (a)]{Beau83b} the author have observed that given a free quotient $Y=(T\times S)/G$ with $T$ complex torus, $S$ K\"ahler manifolds with $b_1(S)=0$ and $G$ finite group; then the automorphism group $\text{Aut}(Y)$ can be identified with the normalizer of $G$ in $\text{Aut}(T)\times \text{Aut}(M)$
Thus Corollary \ref{aut GHV} is essentially a particular case of the observation above.
\end{remark}

\begin{definition}
    With the notation of Corollary \ref{aut GHV}. We call $\alpha_Y$ the \emph{automorphism induced by $\alpha_T$}.
\end{definition}

\begin{definition}\label{def: CY}
A \emph{Calabi-Yau $n$-fold} $Y$ is a compact, K\"ahler, complex $n$-fold such that the canonical bundle $\cK_Y$ is trivial and $h^{i,0}(Y)=0$ if $i\not=0,n$.
\end{definition}

Let $Y$ be a Calabi-Yau $n$-fold: we call  \emph{period} or \emph{volume form} of $Y$ the generator $\omega_Y$ of $H^{n,0}(Y)$. We recall that \begin{equation}\label{chi_CY}
    \chi(Y)=\begin{cases}
        0 & \text{n is odd} \\
        2 & \text{n is even}.
    \end{cases}
\end{equation} 

\begin{lemma}\label{definitionCYA}
Let $Y=T/G$ be a \textup{GHM} $n$-fold with the group $G$. If it is a Calabi-Yau $n$-fold, then $n$ is odd, $n\not=1$ and the complex torus $T$ has an ample line bundle.
\end{lemma}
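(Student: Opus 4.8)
The plan is to establish the three conclusions in the order stated, since the proof of projectivity will rely on already knowing that $n\geq 3$. First I would settle the parity of $n$ by a topological Euler characteristic argument. The quotient map $\pi\colon T\to Y=T/G$ is a degree-$|G|$ finite étale covering, and Euler characteristics are multiplicative along such covers, so $\chi_{\mathrm{top}}(T)=|G|\cdot\chi_{\mathrm{top}}(Y)$. Since $T$ is a complex torus we have $\chi_{\mathrm{top}}(T)=0$, whence $\chi_{\mathrm{top}}(Y)=0$. Comparing with the Calabi-Yau Euler characteristic formula \eqref{chi_CY}, which gives $\chi(Y)=2\neq 0$ when $n$ is even, forces $n$ to be odd.

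To rule out $n=1$, I would observe that a Calabi-Yau $1$-fold has trivial canonical bundle, hence is a smooth compact curve of genus $1$, i.e.\ an elliptic curve, which is itself a complex torus. This contradicts the defining property of a GHM, which is not biholomorphic to a complex torus. Therefore $n\neq 1$, and together with the parity this yields $n\geq 3$.

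Next comes the heart of the argument, the projectivity of $T$. Because $n\geq 3$, the index $i=2$ lies strictly between $0$ and $n$, so the Calabi-Yau hypothesis (Definition \ref{def: CY}) gives $h^{2,0}(Y)=0$, and by Hodge symmetry $h^{0,2}(Y)=0$ as well. Thus $Y$ is a compact Kähler manifold with $H^2(Y,\mathbb{R})=H^{1,1}(Y,\mathbb{R})$. The Kähler cone is a nonempty open subset of $H^{1,1}(Y,\mathbb{R})$, while $H^2(Y,\mathbb{Q})$ is dense in $H^2(Y,\mathbb{R})=H^{1,1}(Y,\mathbb{R})$; hence the Kähler cone contains a rational class. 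By the Kodaira embedding theorem $Y$ is projective, say with an ample line bundle $L$.

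Finally I would transfer ampleness upstairs. Since $\pi$ is a finite morphism and $L$ is ample on $Y$, the pullback $\pi^{*}L$ is ample on $T$, by the standard fact that finite pullbacks preserve ampleness. Hence the complex torus $T$ carries an ample line bundle, as claimed. The main obstacle is the projectivity step: its content is the passage from the mere Kähler structure of $Y$ to a rational polarization, which is precisely where the vanishing $h^{2,0}(Y)=0$ (available only because $n\geq 3$) is indispensable. Once $Y$ is known to be projective, pulling the polarization back to $T$ along the finite cover is routine.
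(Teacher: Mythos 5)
Your overall strategy mirrors the paper's: an Euler characteristic argument for parity, exclusion of dimension $1$, projectivity of $Y$ for $n\ge 3$, and pullback of an ample bundle along the finite cover. However, the parity step as written contains a genuine error: you compute the \emph{topological} Euler characteristic, $\chi_{\mathrm{top}}(T)=|G|\cdot\chi_{\mathrm{top}}(Y)=0$, and then compare with \eqref{chi_CY}. But \eqref{chi_CY} cannot be a statement about the topological Euler characteristic: a K3 surface has $\chi_{\mathrm{top}}=24$ and a quintic Calabi-Yau threefold has $\chi_{\mathrm{top}}=-200$, neither of which is $0$ or $2$. The formula \eqref{chi_CY} is the identity $\chi(\mathcal{O}_Y)=\sum_i(-1)^i h^{0,i}(Y)=1+(-1)^n$ for the holomorphic Euler characteristic (Euler characteristic of the structure sheaf), which is the only reading under which it is true and is what the paper's $\chi$ denotes. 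Consequently the vanishing you derive does not yield the conclusion: there is no contradiction between $\chi_{\mathrm{top}}(Y)=0$ and $\chi(\mathcal{O}_Y)=2$, so $n$ even is not excluded by your comparison. The repair is immediate and is exactly the paper's argument: since $\pi$ is finite étale, the holomorphic Euler characteristic is also multiplicative, $\chi(\mathcal{O}_T)=\deg(\pi)\,\chi(\mathcal{O}_Y)$; for a complex torus $\chi(\mathcal{O}_T)=\sum_i(-1)^i\binom{n}{i}=0$, hence $\chi(\mathcal{O}_Y)=0=1+(-1)^n$, forcing $n$ odd.

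The remaining steps are correct. Your exclusion of $n=1$ (a Calabi-Yau $1$-fold is an elliptic curve, hence a complex torus, contradicting Definition \ref{def: GHV}) is a valid and slightly more self-contained variant of the paper's remark that GHMs do not exist in dimension $1$. Your projectivity argument --- $h^{2,0}(Y)=0$ for $n\ge 3$, so the open K\"ahler cone in $H^{1,1}(Y,\bR)=H^2(Y,\bR)$ contains a rational class and Kodaira embedding applies --- is precisely the proof of the result the paper cites from \cite{Hu}, and the transfer of ampleness to $T$ via the finite morphism $\pi$ matches the paper's final step.
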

\begin{proof}
Let us denote  $\pi: T \longrightarrow Y=T/G$ the  finite étale Galois covering. According to the formula $\chi(T)=\text{deg}(\pi) \chi(Y)$, see \cite[Proposition 1.1.28 ]{Laz}, and since $\chi(T)=0$, we obtain $\chi(Y)=0$. Thus by \eqref{chi_CY}, $n$ must be odd. Moreover, by definition the GHMs don't exist in dimension $1$ hence $n\ge3$.
Let $Y$ be a Calabi-Yau GHM of dimension $n\ge 3$.  We recall, for the reader convenience, that every Calabi-Yau $n$-fold with $n>2$ is projective (\cite[Corollary 5.3.3. and Example 5.3.6 (ii)]{Hu} ). Thus there exists an ample line bundle $L$ on $Y$. 
Since $\pi: T \longrightarrow Y=T/G$  is a finite morphism, $\pi^\ast (L)$ defines an ample line bundle on $T$ by \cite[Proposition 4.3]{Hart2}. Hence, $T$ is an abelian variety.
\end{proof}

\begin{definition}
A \emph{Calabi-Yau $(2n+1)$-fold of type $A$} is a GHM which is also a Calabi-Yau manifold.
\end{definition}

\section{Preliminaries on Calabi-Yau $3$-folds} \label{sec: CY 3fold}
Let $Y$ be a Calabi-Yau $3$-fold. We have:\begin{equation}\label{Euler of CY}e(Y)=2\{h^{1,1}(Y)-h^{2,1}(Y)\}. \end{equation}

\begin{lemma}\label{Picgroup}
    Let $Y$ a Calabi-Yau $3$-fold. Then $\textup{Pic}(Y)\simeq \bZ^{h^{1,1}(Y)}\oplus \bA\textup{b}(\pi_1(Y))$
    where $\bA\textup{b}(\pi_1(Y))$ is the abelianization of $\pi_1(Y)$.
\end{lemma}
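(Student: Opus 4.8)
The plan is to identify $\textup{Pic}(Y)$ with $H^2(Y,\bZ)$ through the exponential sequence, and then to split the latter into its free and torsion parts via the universal coefficient theorem. First I would write down the exponential sequence of sheaves
$$0\longrightarrow \bZ \longrightarrow \cO_Y \longrightarrow \cO_Y^\ast \longrightarrow 0$$
and pass to the associated long exact sequence in cohomology, recalling that $\textup{Pic}(Y)=H^1(Y,\cO_Y^\ast)$. The relevant stretch reads
$$H^1(Y,\cO_Y)\longrightarrow \textup{Pic}(Y)\longrightarrow H^2(Y,\bZ)\longrightarrow H^2(Y,\cO_Y).$$
Since $Y$ is Calabi-Yau, Definition \ref{def: CY} gives $h^{1,0}(Y)=h^{2,0}(Y)=0$, and Hodge symmetry yields $h^{0,1}(Y)=h^{0,2}(Y)=0$; hence $H^1(Y,\cO_Y)=H^2(Y,\cO_Y)=0$, which forces the isomorphism $\textup{Pic}(Y)\simeq H^2(Y,\bZ)$.

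Next I would analyse $H^2(Y,\bZ)$ via the universal coefficient theorem, which provides a short exact sequence
$$0\longrightarrow \textup{Ext}^1_{\bZ}(H_1(Y,\bZ),\bZ)\longrightarrow H^2(Y,\bZ)\longrightarrow \textup{Hom}(H_2(Y,\bZ),\bZ)\longrightarrow 0$$
that splits, since the quotient is a free abelian group and hence projective. The free quotient $\textup{Hom}(H_2(Y,\bZ),\bZ)$ has rank $b_2(Y)$, and because $h^{2,0}(Y)=0$ the Hodge decomposition gives $b_2(Y)=h^{1,1}(Y)$; this contributes the summand $\bZ^{h^{1,1}(Y)}$.

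It remains to identify the Ext term with $\bA\textup{b}(\pi_1(Y))$. By the Hurewicz theorem $H_1(Y,\bZ)\simeq \bA\textup{b}(\pi_1(Y))$, and $\textup{Ext}^1_{\bZ}(H_1(Y,\bZ),\bZ)$ is isomorphic to the torsion subgroup of $H_1(Y,\bZ)$. Here lies the one point deserving genuine care, precisely because the fundamental groups relevant to this paper are typically infinite: one must observe that $b_1(Y)=h^{1,0}(Y)+h^{0,1}(Y)=0$, so $H_1(Y,\bZ)$ has rank zero and is therefore a \emph{finite} abelian group, even when $\pi_1(Y)$ itself is infinite. Consequently the torsion subgroup is all of $H_1(Y,\bZ)$, so $\textup{Ext}^1_{\bZ}(H_1(Y,\bZ),\bZ)\simeq H_1(Y,\bZ)\simeq \bA\textup{b}(\pi_1(Y))$ captures the full abelianization rather than merely a proper torsion part. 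Assembling the two summands gives $\textup{Pic}(Y)\simeq \bZ^{h^{1,1}(Y)}\oplus \bA\textup{b}(\pi_1(Y))$, as claimed.
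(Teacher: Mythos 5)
Your proof is correct and follows essentially the same route as the paper's: the paper likewise identifies $\textup{Pic}(Y)\simeq H^2(Y,\bZ)$ via the vanishing of $H^{1,0}(Y)$ and $H^{2,0}(Y)$ (the exponential-sequence argument you spell out), and then combines the Universal Coefficient Theorem with $H_1(Y,\bZ)\simeq \bA\textup{b}(\pi_1(Y))$. Your version simply makes explicit the details the paper leaves implicit, in particular the observation that $b_1(Y)=0$ forces $H_1(Y,\bZ)$ to be finite, so the Ext term recovers the whole abelianization.
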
 
\begin{proof}
    Since $H^{1,0}(Y)=H^{2,0}(Y)=0$, the first Chern map is an isomorphism: $\text{Pic}(Y)\simeq H^2(X,\bZ)$. Combining the Universal Coefficient Theorem \cite[Section 3.1]{AT}
and the fact that $H_1(Y,\bZ)\simeq \bA\text{b}(\pi_1(Y))$ by \cite[Section 2.A]{AT}, we obtain the result.

\end{proof}

By the Bogomolov-Tian-Todorov unobstructed theorem \cite[Theorem 14.10]{GHJ} and \cite[Remark 14.11]{GHJ}, we have the following characterization of the space that parametrizes the first order deformations of a Calabi-Yau threefolds.

\begin{theorem}\label{deform_CY3}
Let $Y$ be a Calabi-Yau threefold. Then the space of deformations of $Y$ is unobstructed and parametrized by $H^1(Y,\cT_Y)\simeq H^{2,1}(Y)$ where $\cT_Y$ is the tangent sheaf.
\end{theorem}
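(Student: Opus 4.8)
The plan is to treat the two assertions separately, since the deep content — unobstructedness — is exactly the cited Bogomolov--Tian--Todorov theorem, while the identification of the first-order deformation space is a short consequence of the Calabi--Yau hypothesis. For the unobstructedness I would simply invoke \cite[Theorem 14.10]{GHJ}: the Kuranishi family of $Y$ is smooth, so that deformations are unobstructed and the base is smooth of dimension $\dim H^1(Y,\cT_Y)$. By Kodaira--Spencer theory the tangent space to this (now smooth) deformation space at the point $[Y]$ is canonically $H^1(Y,\cT_Y)$, which yields the asserted parametrization.

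It then remains to establish the isomorphism $H^1(Y,\cT_Y)\simeq H^{2,1}(Y)$, and this is where the Calabi--Yau condition enters. Since $\cK_Y\simeq\cO_Y$, the space $H^0(Y,\cK_Y)=H^{3,0}(Y)$ is generated by a nowhere-vanishing holomorphic $3$-form $\omega_Y$, the volume form of $Y$. I would use $\omega_Y$ to define contraction
\[
\iota_{(-)}\omega_Y\colon \cT_Y\longrightarrow \Omega^2_Y,\qquad v\longmapsto \iota_v\omega_Y .
\]
The key step is to check that this is an isomorphism of sheaves. This is a fibrewise statement: in local holomorphic coordinates $(z_1,z_2,z_3)$ one has $\omega_Y=f\,dz_1\wedge dz_2\wedge dz_3$ with $f$ nowhere zero, and contracting the frame $\partial_{z_1},\partial_{z_2},\partial_{z_3}$ produces, up to the unit $f$, the frame $dz_2\wedge dz_3,\;dz_3\wedge dz_1,\;dz_1\wedge dz_2$ of $\Omega^2_Y$. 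Hence the map is an isomorphism on each fibre, and therefore globally, precisely because $\omega_Y$ has no zeros.

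Passing to sheaf cohomology and using Dolbeault's theorem then gives
\[
H^1(Y,\cT_Y)\;\simeq\;H^1(Y,\Omega^2_Y)\;=\;H^{2,1}(Y),
\]
which completes the identification. The only genuinely hard ingredient is the unobstructedness, which I take as given via Bogomolov--Tian--Todorov; every other step is either standard deformation theory (Kodaira--Spencer) or the elementary linear-algebra verification that contracting a nowhere-vanishing top form with tangent vectors identifies $\cT_Y$ with $\Omega^{n-1}_Y$ when $\dim Y=3$.
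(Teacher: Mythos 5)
Your proposal is correct and follows essentially the same route as the paper, which states this theorem by citing the Bogomolov--Tian--Todorov theorem \cite[Theorem 14.10]{GHJ} for unobstructedness together with \cite[Remark 14.11]{GHJ} for the identification $H^1(Y,\cT_Y)\simeq H^{2,1}(Y)$. The only difference is that you spell out the content of that remark — the contraction $v\mapsto \iota_v\omega_Y$ with the nowhere-vanishing holomorphic volume form giving a sheaf isomorphism $\cT_Y\simeq \Omega^2_Y$ — which is exactly the standard argument being referenced.
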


We collect some results on involutions on Calabi-Yau threefolds which will be useful later.

\begin{definition}
Let $f\colon Y'\longrightarrow Y$ be a resolution of singularities. It is called \emph{crepant resolution} if $\cK_{Y'}=f^\ast \cK_{Y}$, \emph{i.e.} $f$ preserves the canonical class of $Y'$.
\end{definition}

\begin{proposition}\label{crep-res-3}\textup{\cite[Theorem 1]{Roan96}}
The quotients $\bC^3/G$ with $G\le \textup{SL}(3,\bC)$ a finite group admits a crepant resolution. 
\end{proposition}

\begin{proposition}\label{quotientCYthreefold} 
Let $Y$ be a Calabi-Yau $3$-fold and $\alpha_Y$ be an involution on $Y$ which preserves $\omega_Y$. The fixed locus of $\alpha_Y$ is either empty or made up of a finite number of disjoint smooth curves. Moreover, the quotient variety ${Y}/{\alpha_Y}$ is birational to a Calabi-Yau $3$-fold.
\end{proposition}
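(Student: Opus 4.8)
The plan is to analyze the action of $\alpha_Y$ locally around each fixed point and then use the assumption that $\omega_Y$ is preserved to control the structure of the fixed locus. Since $\alpha_Y$ is an involution, at any fixed point $p\in Y$ the differential $d\alpha_{Y,p}$ acts on the tangent space $T_pY\simeq \bC^3$ as an involution, hence is diagonalizable with eigenvalues in $\{+1,-1\}$. The fixed locus near $p$ is smooth of dimension equal to the multiplicity of the eigenvalue $+1$, so a priori the components have dimension $0$, $1$, or $2$. First I would eliminate dimensions $0$ and $2$ using the condition that $\alpha_Y$ preserves the volume form $\omega_Y$: since $\omega_Y$ is a generator of $H^{3,0}(Y)$, its local expression is a nonvanishing holomorphic $3$-form, and the action of $d\alpha_{Y,p}$ on $\Lambda^3 T^*_pY$ multiplies it by the product of the eigenvalues. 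Preservation of $\omega_Y$ forces this product to equal $+1$, so the number of $-1$ eigenvalues must be even.

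\medskip

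This parity constraint immediately rules out the isolated-point case (three $-1$ eigenvalues, product $-1$) and the surface case (one $-1$ eigenvalue, product $-1$), leaving only the possibility of two $-1$ eigenvalues and one $+1$ eigenvalue. Hence every component of the fixed locus is one-dimensional and smooth, and near each fixed point the local model is $(z_1,z_2,z_3)\mapsto(z_1,-z_2,-z_3)$ with the curve given by $z_2=z_3=0$. That these curves are disjoint and finite in number follows from compactness of $Y$ together with the fact that distinct components of a smooth fixed locus cannot meet (at an intersection point the tangent space to the fixed locus would exceed dimension $1$, contradicting the eigenvalue count). This establishes the first assertion.

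\medskip

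For the birationality statement I would examine the quotient $Y/\alpha_Y$ and its singularities. Away from the fixed locus the quotient map is étale, so $Y/\alpha_Y$ is smooth there. Along each fixed curve $C$, the local transverse model is the quotient $\bC^2/\pm 1$, an ordinary double point ($A_1$ surface singularity) fibered over $C$; thus $Y/\alpha_Y$ has transverse $A_1$ singularities along the curves. I would then resolve these by blowing up the singular locus: the exceptional divisor over each $C$ is a $\bP^1$-bundle, and the key computation is that the resulting resolution $\beta\colon \widetilde{Y}\to Y/\alpha_Y$ is crepant, i.e. $\beta^*\cK_{Y/\alpha_Y}=\cK_{\widetilde{Y}}$, since blowing up a codimension-two $A_1$ locus introduces no discrepancy. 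Because $\omega_Y$ descends to a nonvanishing holomorphic $3$-form on the smooth locus of $Y/\alpha_Y$ and extends across the resolution, $\widetilde{Y}$ has trivial canonical bundle; a Leray spectral sequence or Euler-characteristic argument then recovers the vanishing of the intermediate $h^{j,0}$, so $\widetilde{Y}$ is Calabi-Yau.

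\medskip

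The main obstacle I anticipate is not the local eigenvalue analysis, which is standard, but verifying cleanly that the resolution is crepant and that the Calabi-Yau conditions $h^{1,0}=h^{2,0}=0$ survive the quotient and resolution. The subtlety is that $H^{j,0}(\widetilde{Y})$ must be identified with the $\alpha_Y$-invariant part of $H^{j,0}(Y)$ (which vanishes for $j=1,2$ since those groups are already zero on $Y$), and one must confirm that the exceptional divisors contribute no new holomorphic forms of degree $1$ or $2$ — a consequence of their being $\bP^1$-bundles over curves, whose relevant Hodge groups vanish. I would phrase the birationality conclusion by noting that $\widetilde{Y}$ is a genuine Calabi-Yau threefold birational to $Y/\alpha_Y$, which is exactly the claim.
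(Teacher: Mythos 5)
Your proposal is correct, and while the first half coincides with the paper's argument, the second half takes a genuinely different route. For the fixed locus, both you and the paper linearize the involution at a fixed point and use preservation of $\omega_Y$ to force the local model $\mathrm{diag}(1,-1,-1)$; the paper does this by citing Cartan's linearization lemma, whereas you argue via diagonalizability of the differential plus the determinant-parity constraint (note that strictly you should also dispose of the all-$(+1)$ case, which would force $\alpha_Y=\mathrm{id}$ by the identity principle, and the smoothness of the fixed locus does implicitly need the Cartan-type linearization, not just diagonalizability of $d\alpha_{Y,p}$ — both are minor points). The real divergence is in the birationality statement: the paper invokes Roan's theorem as a black box to obtain a resolution $Z\to Y/\alpha_Y$ with $K_Z=0$, and then uses birational invariance of $h^{j,0}$; you instead identify the singularities explicitly as transverse $A_1$ singularities along the disjoint fixed curves, blow up the singular locus, verify crepancy by the standard local computation for a codimension-two $A_1$ locus, and check the vanishing of $h^{1,0}$ and $h^{2,0}$ directly from the exceptional divisors being $\bP^1$-bundles over curves. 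Your approach is more self-contained and elementary, and it has the added virtue of producing exactly the resolution (blow-up of the singular locus) that the paper later uses in Theorem \ref{mainth2} and in the orbifold cohomology computations; the paper's citation of Roan is shorter and applies uniformly to more general volume-preserving finite group actions, which is convenient when the argument is iterated over subgroups $\Upsilon\simeq(\bZ/2\bZ)^m$.
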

\begin{proof}
Let $y\in \text{Fix}(\alpha_Y)\not=\emptyset$.
Since $\alpha_Y$ preserves the volume form of $Y$ and by \cite[Lemma 1]{Car}, there exists a local holomorphic coordinate system $z_1, z_2, z_3$ around $y$ such that
$\alpha_Y$ linearizes into $\delta=\text{diag}(-1,-1,1)$. Hence the fixed locus of $\alpha_Y$ is a disjoint union of smooth curves and since it is compact there are a finite numbers of curves in it.
Assume Fix$(\alpha_Y)=\emptyset$ then it is easy to see that $Y/\alpha_Y$ is a Calabi-Yau $3$-fold. 
If Fix$(\alpha_Y)\not=\emptyset$: by applying Proposition \ref{crep-res-3} there exist a  resolution of singularities 
$\beta: Z \longrightarrow Y/\alpha_Y$  such that $K_Z=0$. Since $h^{j,0}(X)=h^{j,0}(X/\alpha_Y)=0$ and they are birational invariants we get $h^{j,0}(Z)=0$ for $j=1,2$. Thus $Z$ is a Calabi-Yau $3$-fold.
\end{proof}

\begin{proposition}\label{kodairadimension}
Let $Y$ be Calabi-Yau $3$-folds and $\alpha_Y$ be an involution on $Y$ which does not preserve $\omega_Y$. Then $\textup{Fix}(\alpha_Y)\not=\emptyset$ and it is the disjoint union of odd-codimensional submanifolds of $Y$. Moreover, \begin{romanenumerate}
    \item If $\alpha_Y$ fixes only points then $Y/\alpha_Y$ admits a desingularization $Z$ such that $k(Z)=0$ but $Z$ has no trivial canonical bundle.
    \item Otherwise, $Y/\alpha_Y$ has negative Kodaira dimension.
\end{romanenumerate}
\end{proposition}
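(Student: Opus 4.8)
The plan is to analyze the local action of $\alpha_Y$ at each fixed point and use the non-preservation of $\omega_Y$ to constrain the eigenvalues of the linearized involution. First I would fix $y\in\textup{Fix}(\alpha_Y)$ and, since $\alpha_Y$ is an involution acting on the smooth threefold $Y$, diagonalize its differential at $y$ as $\textup{diag}(\pm1,\pm1,\pm1)$ in suitable local holomorphic coordinates. The volume form $\omega_Y$ transforms under $\alpha_Y^\ast$ by the determinant of this linearization, namely the product of the three eigenvalues. The hypothesis that $\alpha_Y$ does \emph{not} preserve $\omega_Y$ forces $\alpha_Y^\ast\omega_Y=-\omega_Y$, so the product of the eigenvalues must equal $-1$. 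This rules out $\textup{diag}(1,1,1)$ (which would give a trivial fixed-point-free neighbourhood anyway) and $\textup{diag}(-1,-1,1)$ (the volume-preserving case of Proposition \ref{quotientCYthreefold}); the only admissible local models are $\textup{diag}(1,1,-1)$, whose fixed locus is a surface $\{z_3=0\}$, and $\textup{diag}(-1,-1,-1)$, whose fixed locus is the isolated point $y$. This immediately yields that $\textup{Fix}(\alpha_Y)$ is a disjoint union of a $0$-dimensional and a $2$-dimensional smooth submanifold, and compactness gives finiteness of the components. The non-emptiness of $\textup{Fix}(\alpha_Y)$ I would obtain from a topological/holomorphic Lefschetz-type argument, as an involution on a Calabi-Yau threefold reversing the holomorphic volume form cannot act freely (otherwise the quotient would again have a nowhere-vanishing holomorphic $3$-form pulled back to an anti-invariant one, contradicting $h^{3,0}=1$).

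For part (i), suppose $\alpha_Y$ fixes only surfaces, so every fixed point has local model $\textup{diag}(1,1,-1)$. The key point is that a reflection in a codimension-one locus produces a \emph{smooth} quotient: in the coordinates above the invariant ring $\bC[z_1,z_2,z_3^2]$ is again a polynomial ring, so $Y/\alpha_Y$ is smooth, with the quotient map $q\colon Y\to Y/\alpha_Y$ branched along the image of $\textup{Fix}(\alpha_Y)$. I would then apply the Hurwitz-type formula for a double cover branched along a smooth divisor $B$: writing $q^\ast K_{Y/\alpha_Y}=K_Y-R$ with ramification divisor $R$ mapping to $B$, and using $K_Y\simeq\cO_Y$, one sees $2K_{Y/\alpha_Y}$ is represented by $-B$, i.e. $K_{Y/\alpha_Y}$ is anti-effective and its negative is effective and nonzero. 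Hence $-K_{Y/\alpha_Y}$ is effective while $K_{Y/\alpha_Y}$ has no sections for any positive power, giving Kodaira dimension $-\infty$, i.e. negative Kodaira dimension.

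For part (ii), suppose $\alpha_Y$ fixes only isolated points, each with local model $\textup{diag}(-1,-1,-1)$. Here the singularities of $Y/\alpha_Y$ are the images of the fixed points, each an isolated quotient singularity $\bC^3/\pm\textup{Id}$. The plan is to take a resolution $\beta\colon Z\to Y/\alpha_Y$ and compute the discrepancy of the exceptional divisor over such a point. The singularity $\bC^3/\{\pm1\}$ is \emph{not} canonical of discrepancy zero: it is a terminal singularity with positive discrepancy, and a single blow-up of the point resolves it with exceptional divisor $\bP^2$ having discrepancy $+\tfrac12$ (since $\pm\textup{Id}$ acts as $-1$ on each coordinate, the Reid--Tai/age computation gives a strictly positive discrepancy). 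Consequently $K_Z=\beta^\ast K_{Y/\alpha_Y}+aE$ with $a>0$, and since $K_{Y/\alpha_Y}$ is torsion (the volume form squares to a well-defined form downstairs, so $2K_{Y/\alpha_Y}\sim 0$ away from the points, giving $\kappa=0$), one finds $\kappa(Z)=\kappa(Y/\alpha_Y)=0$ while $K_Z$ acquires a genuinely positive contribution from the exceptional locus and is therefore effective but nontrivial. I expect the main obstacle to be the careful discrepancy bookkeeping in part (ii): one must verify precisely that the chosen resolution yields $K_Z$ numerically nontrivial yet with vanishing Iitada dimension, which requires controlling both the torsion of $K_{Y/\alpha_Y}$ (from $h^{3,0}(Y/\alpha_Y)=0$ but $\omega_Y^{\otimes2}$ descending) and the positivity of the discrepancy, rather than any step in parts of the eigenvalue analysis, which is routine.
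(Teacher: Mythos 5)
Your local analysis (Cartan linearization at a fixed point, determinant $-1$ because $\alpha_Y^\ast\omega_Y=-\omega_Y$, hence only the models $\textup{diag}(1,1,-1)$ and $\textup{diag}(-1,-1,-1)$) and your part (i) (smoothness of a quotient by a reflection, Riemann--Hurwitz giving $q^\ast K_{Y/\alpha_Y}=-R$, hence no pluricanonical sections and $\kappa=-\infty$) coincide with the paper's proof. Part (ii) is where you genuinely diverge: you work downstairs, identifying each singular point as the terminal quotient singularity $\tfrac12(1,1,1)$, blowing it up with discrepancy $\tfrac12$, and using that $2K_{Y/\alpha_Y}\sim 0$ (descent of $\omega_Y^{\otimes 2}$) to get $2K_Z\sim E$ effective, exceptional and nonzero; the paper instead works upstairs, blowing up the fixed points on $Y$, lifting the involution to $\widetilde{Y}$, and comparing $K_{\widetilde{Y}}=\delta^\ast K_Y+2R$ with $K_{\widetilde{Y}}=\widetilde{\pi}^\ast K_Z+R$ to obtain $\widetilde{\pi}^\ast K_Z=R$, then sandwiching $0\le \kappa(Z)\le\kappa(Y)=0$. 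The two computations are equivalent --- pulling your relation $K_Z=\beta^\ast K_{Y/\alpha_Y}+\tfrac12 E$ back through the double cover $\widetilde{\pi}$ and using $\widetilde{\pi}^\ast E=2R$, $\pi^\ast K_{Y/\alpha_Y}=K_Y=0$ recovers the paper's identity --- but yours rests on quoted facts about terminal quotient singularities while the paper's is self-contained. To finish your version you should also say why $\kappa(Z)=0$ rather than $\kappa(Z)>0$: e.g.\ because $h^0(Z,2mK_Z)=h^0(Z,mE)=1$ for the $\beta$-exceptional divisor $E$, or by the paper's inequality $\kappa(Z)\le\kappa(\widetilde{Y})=\kappa(Y)=0$ for the finite cover $\widetilde{\pi}$.

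The one step that would fail as written is your justification of $\textup{Fix}(\alpha_Y)\not=\emptyset$. The parenthetical argument is circular: if $\alpha_Y$ acted freely, the quotient would \emph{not} carry a nowhere-vanishing holomorphic $3$-form --- its canonical bundle would be a nontrivial $2$-torsion line bundle and $h^{3,0}(Y/\alpha_Y)=\dim H^{3,0}(Y)^{\alpha_Y}=0$ --- and there is no a priori contradiction in that (in dimension $2$ this is exactly an Enriques quotient of a K3 surface, which does exist and is free). The non-existence of such free quotients in odd dimension is precisely what has to be proved, and the tool you name does prove it: by the holomorphic Lefschetz fixed point formula, a free action forces $\sum_i(-1)^i\,\textup{tr}\bigl(\alpha_Y^\ast|_{H^i(Y,\cO_Y)}\bigr)=0$, whereas here $H^1(Y,\cO_Y)=H^2(Y,\cO_Y)=0$ and, by Serre duality, $\alpha_Y^\ast$ acts by $-1$ on $H^3(Y,\cO_Y)$, so the sum equals $1-(-1)=2\not=0$. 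Equivalently (this is the paper's route) use multiplicativity of the holomorphic Euler characteristic under the would-be \'etale double cover: $0=\chi(\cO_Y)=2\chi(\cO_{Y/\alpha_Y})$ forces $h^{3,0}(Y/\alpha_Y)=1$, contradicting $H^{3,0}(Y/\alpha_Y)=H^{3,0}(Y)^{\alpha_Y}=0$. Either one-line computation closes the gap.
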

\begin{proof}
Let us assume that $\alpha_Y$ has trivial fixed locus. Then $\chi(Y)=2\chi(Y/\alpha_Z)$ and since $\chi(Y)=0$ we have $\chi(Y/\alpha_Z)=0$. This leads to a contradiction since we find $h^{3,0}(Y/\alpha_Z)=1$ which means that $\alpha_Y$ preserves $\omega_Y$. Thus $\text{Fix}(\alpha_Y)\not=\emptyset$. 
Let $y\in \text{Fix}(\alpha_Y)$:
according to \cite[Lemma 1]{Car} there exists a neighbourhood of $y$ where $\alpha_Y$ can be diagonalized in $\delta_y$. Since $\delta_y$ is an involution and $\delta_y\not\in \text{SL}(3,\bC)$ then the eigenvalue $-1$ has odd multiplicity.
Therefore, the fixed locus of $\alpha_Y$ consists of a finite number of smooth manifolds of odd-codimension. Let $F_{2i}$ the $2i$-dimension subset of $\text{Fix}(\alpha_Y)$. We consider the following diagram.
\begin{center}
\begin{tikzcd}
\widetilde{\alpha_Y}\Circlearrowright\widetilde{Y} \arrow[r, "\gamma"] \arrow[d, "\pi", "2:1"'] & Y \arrow[d, "2:1"]\\
Z:=\widetilde{Y}/\langle \widetilde{\alpha_Y}\rangle \simeq \widetilde{Y/\langle\alpha_Y\rangle} \arrow[r, "\beta"'] & Y/\langle\alpha_Y\rangle
\end{tikzcd}
\end{center}
where $\gamma$ blows up one times each irreducible component $F_{2i}$ for $0\le i\le 1$, since $\alpha_Y$ preserves the blown up locus it lifts to an involution $\widetilde{\alpha_Y}$ on $\widetilde{Y}$ and $\beta$ is the blow up of the singular locus of $Y/\langle \alpha_Y \rangle$. It is easy to check that the diagram commutes.
The ramification divisor of ${\pi}$ is given by 
$$R_{0} + \gamma^{-1}(F_{2})$$ 
where $R_{0}$'s are the exceptional divisors introduced by $\gamma$ over $F_{0}$'s.
By Theorem \cite[I.Theorem 9,.1 (vii)]{BHPV} and the Riemann-Hurtwiz formula \cite[I.Section 16 equation (20)]{BHPV} we have:
$$ \gamma^\ast K_Y+2R_{0}
=K_{\widetilde{Y}} \qquad K_{\widetilde{Y}}= \pi^\ast K_{Z}+R_{0} + \gamma^{-1}(F_{2}). $$
Therefore we obtain:
\begin{equation}\label{canonical_formula}
  \pi^\ast K_{Z}=R_{0} - \gamma^{-1}(F_{2}).  
\end{equation}
If there are no codimension $1$ submanifolds in $\text{Fix}(\alpha_Y)$, \emph{i.e.} there are no $F_{2}$'s components, by the formula \eqref{canonical_formula} we have $\pi^\ast K_Z$ is effective and $K_Z$ too. Hence $k(Z)\ge 0$. Since the Kodaira dimension cannot increase under quotient and it is a birational invariant we get $k(Y/\langle \alpha_Y \rangle)=k(Z)\le k(Y)=0$ and so $k(Z)=0$. 
Otherwise, $\pi^\ast K_Z$ is not effective and so $K_Z$, hence $k(Y/\langle \alpha_Y \rangle)=k(Z)=-\infty$. 
\end{proof}
\section{The Calabi-Yau $3$-folds of type $A$ with the group $\fD_4$} \label{sec: familyd4}
In this section we recall the construction of the family of Calabi-Yau $3$-folds of type $A$ with the group $\fD_4$ presented in \cite{CD2}.\\

We consider the abelian $3$-fold  $A':= E\times E\times E'$ where
\begin{equation}\label{group H1}
\begin{aligned}
& E:=E_\tau=\bC/(\bZ\oplus \tau \bZ ) \quad \tau\in \mathfrak{h} &&E':=E_{\tau'}=\bC/(\bZ\oplus \tau' \bZ)  \quad \tau'\in \mathfrak{h} \\
\end{aligned}
\end{equation}
and $\mathfrak{h}=\{z\in \bC\mid \text{Im}(z)>0\}$ is the upper half plane.
We define:
\begin{equation}\label{group H2}
\begin{aligned}
& r(\ul{z}):=(z_2, -z_1, z_3+u_3) &&s(\ul{z}):=(z_2+u_1,z_1+u_2,-z_3) \\
& (u_1,u_2) \in (E\times E)[2] \setminus\{(0,0)\} \text{ s.t. $u_1\not=u_2$} &&u_3\in E'[4]\setminus\{0\}.
\end{aligned}
\end{equation}
We denote $H:=\langle r,s \rangle$ and its faithful representation $\rho: H \xrightarrow{\qquad} \text{GL}_3(\bC)$:
\begin{equation}\label{representation rho}
    \rho(r)=\begin{pmatrix} 0 & 1 & \\ -1 & 0 & \\ & & 1 \end{pmatrix} \qquad 
    \rho(s)=\begin{pmatrix} 0 & 1 & \\ 1 & 0 & \\ & & -1 \end{pmatrix}
\end{equation}
We observe that $\rho$ decomposes into two irreducible representations $\rho_2\oplus \rho_1$ with dim$\rho_j=j$ for $j=1,2$.\\
We are free to choose $u_1, u_2, u_3$ in several way and in the following definition we choose a way.
\begin{definition}\label{def1X}
Let $E, E', r, s $ as above with  $u_1:=\dfrac{\tau+1}{2}, u_2:=\dfrac{\tau}{2}, u_3=\dfrac{1}{4}$. We denote $X:=\dfrac{A'}{\langle r,s\rangle}$ and $w:=s^2$.
\end{definition}

\begin{lemma}\label{manifoldX}
The algebraic manifold $X$ is a Calabi-Yau $3$-fold of type $A$ with the group $\fD_4$ and its Hodge numbers are $(h^{1,1}(X),h^{2,1}(X))=(2,2)$.
\end{lemma}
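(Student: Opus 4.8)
The plan is to verify the three assertions in Lemma \ref{manifoldX} in order: that $X$ is a Calabi-Yau $3$-fold, that it is of type $A$ with the group $\fD_4$, and that its Hodge numbers are $(2,2)$. First I would establish that $H=\langle r,s\rangle$ acts freely on $A'$ and contains no translations, so that $X=A'/H$ is genuinely a GHM. The group structure should be checked directly from \eqref{group H2}: computing $r^2$, $s^2=w$, and the relation $srs^{-1}=r^{-1}$ (or equivalent) from the linear parts $\rho(r),\rho(s)$ in \eqref{representation rho} identifies the linear image as $\fD_4$ of order $8$; since $\rho$ is faithful, $H\simeq\fD_4$. The freeness of the action is the crucial geometric input: for each nontrivial element $g\in H$ one must show the affine map $g(\ul z)$ has no fixed point on $A'$. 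This reduces to checking that the translation part never lies in the image of $(\rho(g)-I)$ modulo the lattice, and here the specific choices $u_1=\frac{\tau+1}{2}$, $u_2=\frac{\tau}{2}$, $u_3=\frac14$ from Definition \ref{def1X}, together with the constraints $u_1\neq u_2$ in $(E\times E)[2]$ and $u_3\in E'[4]$, are exactly what make the action fixed-point-free. I expect this fixed-point analysis — a finite but element-by-element case check over the eight group elements, using the torsion conditions on the $u_i$ — to be the main obstacle.

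Next, to conclude $X$ is Calabi-Yau I would invoke the structure already set up in Section \ref{sec: CYtypeA}. Since $\rho$ decomposes as $\rho_2\oplus\rho_1$ and $\det\rho(r)=\det\rho(s)=\cdots$ can be read off from \eqref{representation rho} (each has determinant $1$ on the relevant factors so that the holomorphic $3$-form $dz_1\wedge dz_2\wedge dz_3$ is preserved up to the sign conventions), the volume form descends to $X$, giving $\cK_X\simeq\cO_X$. The vanishing $h^{j,0}(X)=0$ for $j=1,2$ follows because $H^{j,0}(X)=H^{j,0}(A')^{H}$ consists of the $H$-invariants of $\wedge^j H^{1,0}(A')$, and one checks via the representation $\rho$ acting on $H^{1,0}(A')=\bC^3$ that the invariant subspaces of $\wedge^1$ and $\wedge^2$ are trivial. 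Because the action is free, $X$ is smooth, and by Lemma \ref{definitionCYA} its projectivity is automatic in dimension $3>1$; combining these, $X$ is a Calabi-Yau $3$-fold of type $A$, and since $\pi_1(X)\simeq\Gamma$ is governed by $H\simeq\fD_4$ via Lemma \ref{fund_group}, it is ``with the group $\fD_4$''.

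Finally, for the Hodge numbers I would compute $h^{1,1}(X)$ and $h^{2,1}(X)$ as dimensions of $H$-invariants. One has $h^{1,1}(X)=\dim\big(H^{1,1}(A')\big)^{H}$ where $H^{1,1}(A')=H^{1,0}(A')\otimes \overline{H^{1,0}(A')}\simeq \rho\otimes\bar\rho$, and $h^{2,1}(X)=\dim\big(H^{2,1}(A')\big)^{H}$ with $H^{2,1}(A')\simeq \wedge^2\rho\otimes\bar\rho$ (equivalently, by Theorem \ref{deform_CY3}, this counts invariant first-order deformations). Decomposing $\rho=\rho_2\oplus\rho_1$ into irreducibles of $\fD_4$ and using character theory (counting the multiplicity of the trivial representation in each tensor/wedge product) should yield $h^{1,1}(X)=2$ and $h^{2,1}(X)=2$; the Euler characteristic check $e(X)=2(h^{1,1}-h^{2,1})=0$ in \eqref{Euler of CY} is consistent with $X$ being a free quotient of a torus, giving a useful sanity test. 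The representation-theoretic bookkeeping here is routine once $\rho$ is pinned down, so the genuine difficulty remains the freeness verification in the first step.
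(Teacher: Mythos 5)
Your proposal has a genuine gap in its first and central step, and it is internally inconsistent there. You propose to show that $H=\langle r,s\rangle$ ``acts freely on $A'$ and contains no translations'' and to conclude $H\simeq\fD_4$ ``since $\rho$ is faithful.'' But you yourself record the relation $s^2=w$, and $w$ \emph{is} a translation: from \eqref{group H2}, $s^2(\ul{z})=(z_1+u_1+u_2,\,z_2+u_1+u_2,\,z_3)$, which is translation by $(u_1+u_2,u_1+u_2,0)\neq 0$ precisely because $u_1\neq u_2$ in $(E\times E)[2]$. Consequently $H$ contains a nontrivial translation, has order $16$ rather than $8$, and $\rho$ is \emph{not} faithful on $H$ (one has $\rho(s)^2=I$ while $s^2=w\neq \mathrm{id}$; the faithfulness in the paper refers, by abuse of notation, to the induced representation of $H/\langle w\rangle$). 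So the verification you flag as the main obstacle would fail as stated: you cannot identify $H$ with $\fD_4$, and the presentation $X=A'/H$ is not the translation-free presentation that the definition of a GHM ``with the group $\fD_4$'' requires (Remark \ref{notranslation}).

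The missing idea, which is exactly how the paper proceeds, is to pass to the quotient torus $A:=A'/\langle w\rangle$ (still an abelian $3$-fold, since $w$ is a finite-order translation) and to the group $G:=H/\langle w\rangle$, which is isomorphic to $\fD_4$, contains no translations, and acts freely on $A$; then $X=A/G$ is a projective GHM with the group $\fD_4$. Your remaining steps survive this correction essentially unchanged and agree with the paper's: $\rho(G)\subset \mathrm{SL}_3(\bC)$ (both generators have determinant $1$) gives $\cK_X\simeq\cO_X$, and the Hodge numbers are computed as invariants, $H^{p,q}(X)=H^{p,q}(A)^G$, where translations act trivially on cohomology so only the linear parts matter; decomposing $\rho=\rho_2\oplus\rho_1$ and counting trivial summands in $\rho\otimes\bar\rho$ and $\wedge^2\rho\otimes\bar\rho$ gives $h^{1,0}=h^{2,0}=0$ and $h^{1,1}=h^{2,1}=2$. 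Note also that the freeness check is then over the seven nontrivial elements of $G$ acting on $A$ (equivalently the fifteen nontrivial elements of $H$ on $A'$), not ``eight group elements.''
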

\begin{proof}
We observe $X=A/G$ where $A:=\dfrac{A'}{w}$ is an abelian $3$-fold and $G:=\dfrac{H}{w}$ acts freely on it.
By an easy direct computation, see also \cite[Theorem 1.1]{CD}, we see that $G$ defines a free action of $\mathfrak{D}_4$ on $A$ which does not contain any translation, hence $X$ is a projective GHM with the group $\mathfrak{D}_4$. Since the group $G\subset \text{SL}(3,\bC)$ then $K_Y=0$. 
Since $H^{i,j}(X)=H^{i,j}(A)^G$, by using \eqref{representation rho} we obtain $h^{1,0}(X)=h^{2,0}(X)=0$ and $h^{1,1}(X)=h^{2,1}(X)=2$. Thus, in particular, $X$ is a Calabi-Yau $3$-folds.
\end{proof}

In this paper, we consider $X$ both as the quotient $\dfrac{A'}{H}$ and $\dfrac{A}{G}$. By abuse of notation we still denote by $\rho$ the representations of $G$ on $\bC^3$.\\
Using the fact that there exists a unique faithful representation of $\mathfrak{D}_4$ over an abelian $3$-fold, see \cite[Theorem 0.1]{OS} and \cite{CD2}, and by the result \cite[Theorem 1]{CC} which describes the Teichm\"uller space of a GHM, one leads to the following Theorem.

\begin{theorem}[\cite{CD2}, Theorem $0.1$]\label{family d4}
The family above of Generalized Hyperelliptic $3$-folds $X$ with group $\mathfrak{D}_4$ forms an irreducible and $2$-dimensional family of complex manifolds. The K\"ahler manifolds with the same fundamental group as $X$ yield an open subspace of the Teichm\"uller space of $X$ parametrized by the periods $\tau$ and $\tau'$ of the elliptic curves $E$ and $E'$.
\end{theorem}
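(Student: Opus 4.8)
The plan is to combine the rigidity of the $\fD_4$-representation with the general description of the Teichm\"uller space of a GHM. First I would recall from \cite[Theorem 1]{CC} that, once the underlying real torus and the real linear action of the group on the universal cover $\bR^6$ are fixed, the locus inside the Teichm\"uller space of $X$ consisting of K\"ahler manifolds with fundamental group $\pi_1(X)$ is identified with the space $\mathcal{J}$ of $G$-invariant complex structures, that is, the $J\in GL_6(\bR)$ with $J^2=-\mathrm{Id}$ commuting with the fixed real representation $\rho_{\bR}$ of $G\simeq\fD_4$, and that this locus is open. Since every complex torus carries the flat K\"ahler metric and averaging it over the finite group $G$ produces an invariant metric descending to the quotient, each such $J$ yields a K\"ahler GHM, so $\mathcal{J}$ is exactly the K\"ahler locus. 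By \cite[Theorem 0.1]{OS} together with \cite{CD2}, the faithful action of $\fD_4$ on an abelian $3$-fold is unique up to equivalence, hence $\rho_{\bR}$ is rigid and its complexification is the representation $\rho=\rho_2\oplus\rho_1$ of \eqref{representation rho}.

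Next I would decompose $\mathcal{J}$ along the isotypic components of $\rho_{\bR}$. Any $G$-invariant complex structure preserves the isotypic decomposition, so $\mathcal{J}$ splits as a product over the distinct real irreducible constituents, and on each factor the admissible complex structures are governed by the commutant algebra $\mathrm{End}_G$, computed via Schur's lemma. Both irreducible summands of $\rho$ are of real type: $\rho_1$ has real character with values in $\{\pm1\}$, while $\rho_2$ is the complexification of the standard real $2$-dimensional representation of the dihedral group. Consequently each realification is a sum of two copies of a real irreducible constituent, and in each case $\mathrm{End}_G\simeq M_2(\bR)$, acting on the two-dimensional multiplicity space.

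It then remains to identify, for each factor, the set $\{\,j\in M_2(\bR):j^2=-\mathrm{Id}\,\}$ of compatible complex structures with a period domain. This set has two connected components interchanged by orientation, each biholomorphic to the upper half-plane $\mathfrak{h}$; selecting the component compatible with the given orientation produces a single copy of $\mathfrak{h}$. For the summand $\rho_1$, acting on the coordinate $z_3$, this recovers the modulus $\tau'$ of $E'$; for the summand $\rho_2$, acting on $(z_1,z_2)$, invariance forces the diagonal complex structure on $E\times E$ and the resulting $\mathfrak{h}$ is parametrized by the common modulus $\tau$ of the two factors $E$.

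Putting the two factors together yields $\mathcal{J}\simeq\mathfrak{h}\times\mathfrak{h}$. This space is connected, whence the family is irreducible, and it has complex dimension $2$, its two coordinates being precisely the periods $\tau$ and $\tau'$; the openness of the K\"ahler-with-fixed-$\pi_1$ locus is the content of \cite[Theorem 1]{CC}, which completes the statement. I expect the main obstacle to be the commutant computation for $\rho_2$: one must verify that the $2$-dimensional irreducible representation of $\fD_4$ is genuinely of real type, and not quaternionic, so that $\mathrm{End}_G$ equals $M_2(\bR)$ rather than $\bC$ or the quaternions, and then check that the compatible complex structures collapse to a single $\mathfrak{h}$ rather than a higher-dimensional domain. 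This is exactly what forces the family to be $2$-dimensional and not larger.
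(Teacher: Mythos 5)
Your proposal is correct and follows essentially the same route the paper indicates: it cites the uniqueness of the faithful $\fD_4$-representation on an abelian $3$-fold (\cite[Theorem 0.1]{OS}, \cite{CD2}) together with \cite[Theorem 1]{CC} on the Teichm\"uller space of a GHM, and your isotypic/commutant computation yielding $\mathfrak{h}\times\mathfrak{h}$ is just the real-structure formulation of the $G$-Hodge decomposition argument the paper itself deploys in Section \ref{section 10} (choice of $M^{1,0}_\chi\subset M_\chi$ for the two real characters). No gaps; this is the intended argument, fleshed out.
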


\begin{remark}
By Theorem \ref{family d4} we deduce that all GHM $3$-folds with the group $\fD_4$ are Calabi-Yau $3$-folds of type $A$ with the group $\fD_4$.
\end{remark}

\begin{definition}
We denote by $\cF^A_{\fD_4}$ the $2$-dimensional family of Calabi-Yau $3$-folds of type $A$ with the group $\fD_4$.
\end{definition}

\begin{remark}\label{modulispace}
Theorem \ref{family d4} tell us that all manifolds $Y\in \cF^A_{\mathfrak{D}_4}$ are given as the quotient of an abelian $3$-fold $E_\mu\times E_\mu\times E_{\mu'}$ by a free action of a group $H'$ of order $16$ which contains a normal subgroup $G'$ isomorphic to $\fD_4$ and $G'$ does not contain any translation. In particular, the space of parameters of $\cF^A_{\mathfrak{D}_4}$ is given by $\cM_{1,1}\times \cM_{1,1}$, where $\cM_{1,1}=\mathfrak{h}/\text{SL}_2(Z)$ is the moduli space of elliptic curves, see \cite[Section 4]{Ap}.
Thus $Y\in \cF^A_{\mathfrak{D}_4}$ is associated to the pair $(\mu,\mu')\in \cM_{1,1}\times \cM_{1,1}$ and viceversa.
\end{remark}

\section{The Picard group of $X\in \cF^A_{\fD_4}$}\label{sec: neronseveri}

In this section we explicitly describe the Picard group of $X\in \cF^A_{\fD_4}$. It is worth recalling that abstractly the Picard group of a Generalized Hyperelliptic Manifolds is described in \cite[Theorem 2.5]{Cat23}. Although, in this situation we know that the GHMs in $\cF^A_{\fD_4}$ are Calabi-Yau threefolds and so we can use the characterization of their Picard group given in Lemma \ref{Picgroup}.\\

By Lemma \ref{fund_group} we known that $X=A'/H=\bC^3/\Gamma$ with $\Gamma\le \text{Iso}(\bC^3)$ crystallographic group and $\Gamma\simeq \pi_1(Y)$. Moreover it holds
\begin{center}
    \begin{tikzcd}
0\longrightarrow\pi_1(A')\simeq \Lambda\longrightarrow\pi_1(X)\simeq\Gamma \longrightarrow H \longrightarrow 0
    \end{tikzcd}
    \footnote{We just observe that in this case $\Lambda$ is not the maximal abelian and normal subgroup of finite index.}
\end{center}

We denote by $\ol{r}, \ol{s}\in \Gamma$ lifts of $r,s\in H$ to $\bC^3$, respectively and we write $\Lambda=\{ \lambda(\ul{z})=(z_1+t_1,z_2+t_2, z_3+t_3) \mid t_i\in \Lambda_i \}$ where $\Lambda_1=\Lambda_2=\tau\bZ\oplus \bZ\simeq \pi_1(E)$ and $\Lambda_3=\tau'\bZ\oplus \bZ\simeq \pi_1(E')$.
We define three elements in $\Lambda$: 
\begin{align*}
    &\lambda_{1}: (z_1,z_2,z_3)\xmapsto{\quad} (z_1+1, z_2, z_3) \\
& \lambda_{2}: (z_1,z_2,z_3)\xmapsto{\quad} (z_1, z_2+1, z_3) \\
& \lambda_{3}: (z_1,z_2,z_3)\xmapsto{\quad} (z_1+\tau, z_2+\tau, z_3)
\end{align*}
and we consider $\Sigma_1:=\langle \lambda_1,\lambda_2,\lambda_3, [\overline{r},\overline{s}]\rangle$ a specific subgroup of $\Gamma\simeq \pi_1(X)$.
    
\begin{theorem}\label{commutator group}
Let $X$ be as in Definition \ref{def1X}, we have the following isomorphism:
$$ \bA\text{b}(\pi_1(X)) \simeq (\bZ/4\bZ)\times (\bZ/4\bZ)\times (\bZ/2\bZ)=\langle \overline{r}, \overline{s}, \lambda_{\tau'} \rangle$$ where $\lambda_{\tau'}(\ul{z})=(z_1,z_2,z_3+\tau')$.
\end{theorem}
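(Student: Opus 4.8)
The plan is to compute the abelianization $\bA\text{b}(\pi_1(X)) = \Gamma/[\Gamma,\Gamma]$ directly from the crystallographic presentation of $\Gamma$. I would start by writing down an explicit presentation of $\Gamma$ with generators $\ol{r},\ol{s}$ together with the lattice translations $\lambda_1,\lambda_2,\lambda_3$ (and the generators of $\Lambda_3$ coming from $\pi_1(E')$), and relations coming from two sources: the conjugation action of $\ol{r},\ol{s}$ on $\Lambda$ (dictated by $\rho(r),\rho(s)$ in \eqref{representation rho}), and the relations expressing $\ol{r}^4$, $\ol{s}^2=w$, etc. as specific elements of $\Lambda$ (coming from the translation parts $u_1=\tfrac{\tau+1}{2}$, $u_2=\tfrac\tau2$, $u_3=\tfrac14$ of the affine maps $r,s$). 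This is the bookkeeping core of the argument.

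Next I would pass to the abelianization, where all commutators die. The key mechanism is that the conjugation relations become, after abelianizing, equalities of the form $\lambda^{\rho(g)} = \lambda$ in $\bA\text{b}(\pi_1(X))$, i.e. $(\rho(g)-I)\lambda = 0$ for each lattice generator $\lambda$ and each $g\in\{r,s\}$. Feeding in $\rho(r)$ and $\rho(s)$ forces relations among the images of $\lambda_1,\lambda_2,\lambda_3$ and the $E'$-generators; concretely, since $\rho$ swaps the first two coordinates (up to sign), the images of $\lambda_1$ and $\lambda_2$ get identified or sent to torsion, which is exactly why $\lambda_3=\lambda_1\lambda_2$-type combinations survive while individual $\lambda_i$ do not, and why $\lambda_{\tau'}$ (the $\tau'$-translation in the third factor) survives as a $\bZ/2\bZ$ class. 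I expect that after this reduction the free part collapses entirely, leaving a finite abelian group generated by the images of $\ol{r},\ol{s}$ and $\lambda_{\tau'}$.

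The orders are then pinned down by the translation-part relations. The element $u_3=\tfrac14\in E'[4]$ entering $r$ is what makes $\ol{r}$ have order $4$ in the abelianization (its fourth power lands in a nontrivial $E'$-translation that must be trivialized), and similarly the half-period data $u_1,u_2$ together with $w=s^2$ control the order of $\ol{s}$; I would check that $\ol{s}$ also has order $4$ and that $\lambda_{\tau'}$ contributes the remaining $\bZ/2\bZ$. Verifying that the group is exactly $(\bZ/4\bZ)\times(\bZ/4\bZ)\times(\bZ/2\bZ)$ — with no further identifications between $\ol r,\ol s$ and no larger order — is the delicate point: one must confirm both that the claimed generators are independent (no hidden relation merges them) and that no extra relation kills, say, the $\bZ/2\bZ$ factor.

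The main obstacle will be the careful translation-part arithmetic: tracking how the affine translation vectors $u_1,u_2,u_3$ and the commutator $[\ol r,\ol s]$ (whose class is built into $\Sigma_1$) interact under abelianization, so that one correctly identifies which multiples of $\ol r,\ol s,\lambda_{\tau'}$ become trivial. A clean way to organize this is to first quotient by the normal subgroup $\Sigma_1=\langle\lambda_1,\lambda_2,\lambda_3,[\ol r,\ol s]\rangle$ and show $\Gamma/\Sigma_1$ already surjects onto the claimed group, then verify $\Sigma_1$ lies in the commutator subgroup so that the surjection is an isomorphism onto $\bA\text{b}(\pi_1(X))$; the role of $\Sigma_1$ in the setup strongly suggests this is the intended route.
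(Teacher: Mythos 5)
Your core computation is in substance the paper's own proof: the relations you describe as $(\rho(g)-I)\lambda=0$ are exactly the commutator identities the paper writes down ($[\lambda,\ol{r}^2]$ produces the translations by $(-2t_1,-2t_2,0)$, $[\widetilde{\lambda},\ol{s}]$ those by $(0,0,-2t_3)$, and $\lambda_1,\lambda_2,\lambda_3$ are exhibited as commutators modulo $2\Lambda$), and the orders come out as you predict: $\ol{r}^4$ is the translation by $(0,0,4u_3)=(0,0,1)=[\ol{r},\ol{s}]^2$, while $\ol{s}^4=\lambda_1\lambda_2\lambda_3^2$, giving orders $4,4,2$ for the images of $\ol{r},\ol{s},\lambda_{\tau'}$.

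However, the organization proposed in your last paragraph has a genuine gap, on two counts. First, $\Sigma_1$ alone is too small: every element of the normal closure of $\Sigma_1=\langle\lambda_1,\lambda_2,\lambda_3,[\ol{r},\ol{s}]\rangle$ has third translation coordinate in $\tfrac{1}{2}\bZ$, so the commutator $[\lambda_{\tau'},\ol{s}]$, which is the translation by $(0,0,2\tau')$, survives in $\Gamma/\Sigma_1$; hence $\Gamma/\Sigma_1$ is not abelian, and this is precisely why the paper quotients by $\langle 2\Lambda,\Sigma_1\rangle$ rather than by $\Sigma_1$. Second, the inference ``$\Gamma/\Sigma_1$ surjects onto the claimed group $Q$ and $\Sigma_1\subseteq[\Gamma,\Gamma]$, hence $\bA\text{b}(\pi_1(X))\simeq Q$'' is not valid: those two facts only give a surjection $\bA\text{b}(\pi_1(X))\twoheadrightarrow Q$, and leave open the possibility that the abelianization is strictly larger. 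What is actually needed is a normal subgroup $N\subseteq[\Gamma,\Gamma]$ such that $\Gamma/N$ is itself abelian; minimality of the commutator subgroup then forces $[\Gamma,\Gamma]=N$, and one computes $\Gamma/N$ exactly. This is the paper's argument with $N=\langle 2\Lambda,\Sigma_1\rangle$. Your earlier paragraphs already contain the repair — the $(\rho(g)-I)\lambda$ relations are visibly commutators and supply exactly the missing $2\Lambda$ — so the fix is to enlarge $\Sigma_1$ by $2\Lambda$ and to replace the surjection argument by the verification that the resulting quotient is abelian.
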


\begin{proof}
Let us consider $\lambda_{1,\tau}: (z_1, z_2, z_3)\mapsto (z_1+\tau, z_2, z_3)$ and
$2\Lambda$ in $\pi_1(X)$. The following relations hold:
    \begin{align*}
    & (z_1-2t_1,z_2-2t_2,z_3)=[\lambda,(\overline{r})^2](\ul{z}) \quad \forall \lambda\in \Lambda \\
    & (z_1,z_2,z_3-2t_3)=[\widetilde{\lambda},\overline{s}](\ul{z})  \quad \forall \widetilde{\lambda}(\ul{z}) =(z_1,z_2,z_3+t_3)\in \pi_1(A')\simeq \Lambda \\
& \lambda_1=[(\overline{r}), (\overline{s})^2] \text{(mod}2\Lambda)
\qquad \lambda_2=[\overline{r}^{-1}, (\overline{s})^2]  \text{(mod}2\Lambda) \qquad \lambda_3=[\lambda_{1,\tau},\overline{r}]  \text{(mod}2\Lambda). 
\end{align*}
Thus $\langle 2\Lambda, \Sigma_1 \rangle \le [\pi_1(X),\pi_1(X)]$ and since it is normal we get the following diagram:
\begin{equation}\label{ab}
    \begin{tikzcd}
    \pi_1(X) \arrow[r,"\varphi"] \arrow[dr, two heads] &
    \Sigma:=\dfrac{\pi_1(X)}{\langle 2\Lambda,\Sigma_1\rangle}\arrow[d,"f", two heads] \\ 
    & \bA\text{b}(\pi_1(X)).
    \end{tikzcd}
\end{equation}
We get that the generators of 
$ \Sigma$ are $\{\varphi(\overline{r}),\varphi(\overline{s}), \varphi(\lambda_{\tau'}),\varphi(\lambda_{1,\tau}),\varphi(\lambda_{2,\tau}) \}$
where $\varphi(\lambda_{\tau'})(\ul{z})=(z_1, z_2, z_3+\tau')$ and $\varphi(\lambda_{2,\tau})(\ul{z})=(z_1, z_2+\tau, z_3)$. 
According to the following relations
\begin{align*}
&\varphi(\lambda_{2,\tau})=\varphi(\lambda_{1,\tau})^{-1} && \varphi(\lambda_{1,\tau})=\varphi(\overline{r}\overline{s})^2=\varphi(\overline{r})^2\varphi(\overline{s})^2.
\end{align*}
we get that $\Sigma$ is generated by 
$\varphi(\overline{r}),\varphi(\overline{s}),\varphi(\lambda_{\tau'})$. It is easy to check that $\Sigma$ is an abelian group. From the fact that $\varphi(\lambda_{\tau'})$ has order two and $\varphi(\overline{r}),\varphi(\overline{s})$ have order four the only possibility is $\Sigma\simeq \bZ/4\bZ \times \bZ/4\bZ \times \bZ/2\bZ$. Since the commutator subgroup $[\pi_1(X),\pi_1(X)]$ is the smallest subgroup of $\pi_1(X)$ such that the quotient group is abelian, we obtain $[\pi_1(X),\pi_1(X)]=\langle 2 \Lambda, \Sigma_1\rangle $. Thus $f$ in diagram \eqref{ab} is an isomorphism and $\bA$b$(\pi_1(X))\simeq\Sigma$.
\end{proof}

\begin{corollary}\label{mainth3-1}
Let $X\in \cF^A_{\mathfrak{D}_4}$ be as in Definition \ref{def1X}. 
The Picard group is: $$\textup{Pic}(X)\simeq\bZ^2 \oplus (\bZ/4\bZ)\times (\bZ/4\bZ)\times (\bZ/2\bZ) $$
where
$ \bZ/4\bZ\times \bZ/4\bZ\times \bZ/2\bZ=\langle \overline{r}, \overline{s}, \lambda_{\tau'} \rangle$  is defined in Theorem \ref{commutator group}.
\end{corollary}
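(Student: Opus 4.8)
The plan is to derive this corollary as an immediate consequence of the three preceding results, since no new geometric input is required. By Lemma \ref{manifoldX} we know that $X$ is a Calabi-Yau $3$-fold, so the structural description of the Picard group in Lemma \ref{Picgroup} applies verbatim: it gives
\[
\textup{Pic}(X) \simeq \bZ^{h^{1,1}(X)} \oplus \bA\textup{b}(\pi_1(X)).
\]
The two inputs that remain are the value of $h^{1,1}(X)$ and the identification of the torsion summand $\bA\textup{b}(\pi_1(X))$.

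Next I would substitute the known numerical data. Lemma \ref{manifoldX} records that $(h^{1,1}(X),h^{2,1}(X))=(2,2)$, so the free part of $\textup{Pic}(X)$ has rank exactly $2$, yielding the summand $\bZ^2$. For the torsion part, Theorem \ref{commutator group} furnishes the isomorphism $\bA\textup{b}(\pi_1(X)) \simeq (\bZ/4\bZ)\times(\bZ/4\bZ)\times(\bZ/2\bZ)$, together with the explicit generators $\langle \overline{r},\overline{s},\lambda_{\tau'}\rangle$. Combining the free and torsion contributions gives precisely
\[
\textup{Pic}(X) \simeq \bZ^2 \oplus (\bZ/4\bZ)\times(\bZ/4\bZ)\times(\bZ/2\bZ),
\]
with the stated generators carried over directly from Theorem \ref{commutator group}.

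There is no genuine obstacle here: the entire difficulty was absorbed into Theorem \ref{commutator group}, where the commutator subgroup $[\pi_1(X),\pi_1(X)]$ is pinned down and the abelianization is computed. The corollary is a bookkeeping step that packages Lemma \ref{Picgroup}, Lemma \ref{manifoldX}, and Theorem \ref{commutator group} into a single statement. The only point warranting a sentence of care is that one must cite Lemma \ref{manifoldX} first to justify that $X$ satisfies the Calabi-Yau hypotheses $H^{1,0}(X)=H^{2,0}(X)=0$ underlying Lemma \ref{Picgroup}, so that the first Chern class map is an isomorphism and the decomposition into free rank $h^{1,1}(X)$ plus $\bA\textup{b}(\pi_1(X))$ is valid.
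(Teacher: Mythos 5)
Your proposal is correct and matches the paper's own proof, which likewise obtains the result by combining Lemma \ref{Picgroup}, the value $h^{1,1}(X)=2$ from Lemma \ref{manifoldX}, and the computation of $\bA\textup{b}(\pi_1(X))$ in Theorem \ref{commutator group}. The only difference is that you spell out the justification (via Lemma \ref{manifoldX}) that $X$ satisfies the Calabi-Yau hypotheses needed for Lemma \ref{Picgroup}, which the paper leaves implicit.
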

\begin{proof}
By combining \ref{Picgroup}, $h^{1,1}(X)=2$ and the description of $\bA$b$(\pi_1(X))$ in Theorem \ref{commutator group}, we obtain the result.
\end{proof}

\section{Automorphism group of manifolds in $\mathcal{F}_{\mathfrak{D}_4}^A$}\label{sec: automorphismsgroup}

\noindent In this section we are going to prove the following result:

\begin{theorem}\label{mainth1}
Let us assume that $\textup{End}_{\bQ}(E)\not\simeq \bQ(\zeta_6)$.
The automorphism group of $X$ is isomorphic to $\textup{Aut}(X)\simeq (\bZ/2\bZ)^4$ whose ele\-ments are induced by order two translations on $A'$ satisfying \begin{equation}\label{translation}
 t_1+t_2\in \{0,\frac{1}{2}\} \quad t_1\in E[2] \quad t_3\in E'[2]. 
 \end{equation} In particular, every $\alpha_{X}$ preserves the volume form of $X$.
\end{theorem}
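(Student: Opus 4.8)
The plan is to use the characterization of $\textup{Aut}(X)$ from Corollary \ref{aut GHV}, which provides an epimorphism $\varphi\colon \textup{N}_{\textup{Aut}(A')}(H) \twoheadrightarrow \textup{Aut}(X)$ with kernel $H$. So the strategy reduces to computing the normalizer $\textup{N}_{\textup{Aut}(A')}(H)$ inside $\textup{Aut}(A')$ and then quotienting by $H$. First I would recall the structure of $\textup{Aut}(A')$ for the abelian threefold $A'=E\times E\times E'$: an automorphism is an affine map $\underline{z}\mapsto M\underline{z}+t$ where $M\in \textup{GL}_3(\bC)$ is a linear part preserving the lattice $\Lambda$ and $t\in \bC^3$ is a translation vector, modulo $\Lambda$. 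Since $E$ and $E'$ are generic elliptic curves (parametrized by $\tau,\tau'$ with no extra endomorphisms), the linear parts are heavily constrained.

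Next I would determine which elements $(M,t)\in \textup{Aut}(A')$ normalize $H=\langle r,s\rangle$. The key constraint is that conjugation by $(M,t)$ must send $H$ to itself, and in particular must respect the faithful representation $\rho=\rho_2\oplus\rho_1$ from \eqref{representation rho}. This splits $\bC^3$ into the $E\times E$ block and the $E'$ factor, so I expect $M$ to be block-diagonal and, because conjugation must permute the linear parts $\rho(r),\rho(s)$ of $H$, the linear part $M$ of any normalizing automorphism is itself forced to lie in the normalizer of $\rho(H)$ in $\textup{GL}_3$; a direct computation with the explicit matrices should show that the admissible linear parts already lie in $\rho(H)$ (up to the generic endomorphism restrictions), so that every normalizing automorphism is $H$ composed with a \emph{translation} $t_\alpha$. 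This is the crucial reduction: modulo $H$, every automorphism of $X$ comes from a translation of $A'$.

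Then I would pin down exactly which translations $t=(t_1,t_2,t_3)$ give well-defined automorphisms of $X$, i.e.\ normalize $H$ modulo $H$. The condition is that conjugating $r$ and $s$ by the translation $t_\alpha$ lands back in $H\cdot\Lambda$; writing out $r\circ t_\alpha \circ r^{-1}$ and $s\circ t_\alpha \circ s^{-1}$ and comparing with $t_\alpha$ using the explicit formulas for $r,s$ in \eqref{group H2}, one gets linear congruences on $(t_1,t_2,t_3)$ modulo the lattice. Because $\rho(r)$ swaps the first two coordinates (with a sign) and $\rho(s)$ swaps them and negates the third, these congruences should force $2t_i\in\Lambda_i$ — so each $t_i$ is a $2$-torsion point — together with the compatibility relation $t_1+t_2\in\{0,\tfrac12\}$ recorded in \eqref{translation}. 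Counting the solutions modulo the translations already inside $H$ (i.e.\ modulo $\ker\varphi=H$) should yield exactly $16$ classes, giving $\textup{Aut}(X)\simeq(\bZ/2\bZ)^4$; the group is elementary abelian of exponent two precisely because all the representatives are order-two translations.

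Finally, for the last assertion I would check that every such automorphism preserves the volume form $\omega_X$. Since the $\alpha_X$ are all induced by translations $t_\alpha$, their lifts to $A'$ act trivially on $H^{3,0}(A')$ (translations act as the identity on global holomorphic forms of a complex torus), hence $\alpha_X^\ast\omega_X=\omega_X$. I expect the main obstacle to be the normalizer computation in the second step: showing rigorously that no ``new'' linear part outside $\rho(H)$ can arise requires using the genericity of $\tau,\tau'$ to rule out exotic isogenies/endomorphisms of $E$ and $E'$ and carefully handling the automorphism $z\mapsto -z$ of each elliptic factor, which lies in $\rho(H)$ and so does not enlarge the quotient. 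The translation-congruence bookkeeping in the third step is routine once the linear part is controlled.
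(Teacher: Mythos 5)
Your overall skeleton (reduce to the normalizer of $H$ on the covering torus, force the linear parts into $\rho(H)$, solve translation congruences, count, and note that translations preserve the volume form) matches the paper's, and your last two steps are essentially the paper's Lemma \ref{teclemma} and the conclusion of its proof of Theorem \ref{mainth1}. However, there is a genuine gap at your very first step, and it is exactly where the paper has to work hardest. Corollary \ref{aut GHV} does \emph{not} give an epimorphism $\textup{N}_{\textup{Aut}(A')}(H)\twoheadrightarrow \textup{Aut}(X)$ with kernel $H$: that corollary applies to a GHM presented as $T/G$ with $G$ containing \emph{no translations}, whereas $H$ contains the translation $w=s^2$. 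The translation-free presentation is $X=A/G$ with $A=A'/\langle w\rangle$ and $G=H/\langle w\rangle$, so Bieberbach's theorems only guarantee that a lift of $\alpha_X$ to $\bC^3$ normalizes the \emph{maximal} abelian normal finite-index subgroup of $\pi_1(X)$, which is $\pi_1(A)$, not $\pi_1(A')$ (the paper points this out in a footnote in Section \ref{sec: neronseveri}). Hence what comes for free is $\textup{Aut}(X)\simeq \textup{N}_{\textup{Aut}(A)}(G)/G$; the assertion you start from --- that every automorphism of $X$ lifts all the way to $A'$ --- is precisely Proposition \ref{Aut(X)}, not a formal consequence of Corollary \ref{aut GHV}.

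Because of this, your computation is performed on the wrong torus, and by itself it only yields an \emph{injection} $\textup{N}_{\textup{Aut}(A')}(H)/H\hookrightarrow \textup{Aut}(X)$, i.e. the lower bound $(\bZ/2\bZ)^4\subseteq\textup{Aut}(X)$ rather than the full determination. A priori $\textup{Aut}(X)$ could contain elements induced by automorphisms of $A=B\times E'$ (with $B=(E\times E)/w_{|E\times E}$) whose linear part does not preserve the lattice of $E\times E$ and which therefore do not lift under the degree-two isogeny $E\times E\to B$; such elements are invisible to any normalizer computation carried out on $A'$. Ruling them out is the substance of the paper's Proposition \ref{Aut(X)}: it uses Oguiso's finiteness of $\textup{Aut}(X)$, the observation that $B$ cannot split as a product of two isomorphic elliptic curves, and Fujiki's classification of finite automorphism groups of abelian surfaces to show that, under the standing assumption $\textup{End}_{\bQ}(E)\neq\bQ(\zeta_6)$, the only finite automorphism group of $B$ containing $\fD_4$ is $\fD_4$ itself, whose elements do lift. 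Your closing remark that genericity must ``rule out exotic endomorphisms of $E$ and $E'$'' gestures at the right phenomenon, but applying it to the normalizer of $\rho(H)$ inside the lattice-preserving maps of $E\times E\times E'$ does not address automorphisms living only on $B$; that argument has to be run on $B$ (equivalently on $A$), as in Proposition \ref{Aut(X)}, before your remaining steps can proceed.
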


\textbf{Notation:} We denote by $\text{Aut}_0(T)$ the group of invertible endomorphism of a complex torus $T$.

\begin{remark}\label{lifting under isogeny}
Let $T_1=\bC^n/\Lambda_1$ and $T_2=\bC^n/\Lambda_2$ be two $n$-dimensional complex tori, we consider an isogeny $f: T_1\xrightarrow{ m:1 } T_2$. Let $\alpha_2\in \text{Aut}(T_2)$, $\eta: \langle \alpha_2 \rangle \longrightarrow \text{GL}_n(\bC)$ be its representation. Then $\alpha_2$ admits $m$ lifts to $T_1$ if and only if $\eta(\alpha_2)\in \text{Aut}_0(T_1)$. 
\end{remark}

\begin{remark}
 We observe that $ A=B\times E' $ where $B=\dfrac{E\times E}{w_{|E\times E}}$. We denote by $G_2=G_{|B}$ and $G_1=G_{|E'}$: it is easy to see that $G_1\simeq G_2\simeq \fD_4$.   
\end{remark}

\begin{proposition}\label{Aut(X)}
Let us assume that $\textup{End}_{\bQ}(E)\not\simeq \bQ(\zeta_6)$. Then for $X\in \cF^A_{\fD_4}$ we have $\textup{Aut}(X)\simeq\dfrac{\textup{N}_{\textup{Aut}(E\times E)\times \textup{Aut}(E')}(H) }{H}$.
\end{proposition}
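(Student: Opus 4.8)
The plan is to exploit the general description of the automorphism group of a GHM provided by Corollary \ref{aut GHV}, and then to identify the normalizer $\textup{N}_{\textup{Aut}(A)}(G)$ with the stated product normalizer by using the product structure $A=B\times E'$ together with the hypothesis on $\textup{End}_{\bQ}(E)$. Concretely, Corollary \ref{aut GHV} applied to the GHM $X=A/G$ already yields $\textup{Aut}(X)\simeq \textup{N}_{\textup{Aut}(A)}(G)/G$, so the entire content of the Proposition reduces to establishing the isomorphism of normalizers
\begin{equation*}
\textup{N}_{\textup{Aut}(A)}(G)\;\simeq\;\textup{N}_{\textup{Aut}(E\times E)\times \textup{Aut}(E')}(H),
\end{equation*}
or more precisely that the relevant quotients by $G$ and by $H$ agree. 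Thus the first step is simply to invoke Corollary \ref{aut GHV} and reduce to a statement purely about $\textup{Aut}(A)$.

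Next I would analyze $\textup{Aut}(A)$ using $A=B\times E'$ with $B=(E\times E)/w_{|E\times E}$. The key point is that an automorphism normalizing $G\simeq \fD_4$ must respect the isotypic decomposition of the $G$-representation on $H^{1,0}$, which by \eqref{representation rho} splits as $\rho_2\oplus\rho_1$ with the $2$-dimensional piece supported on $B$ (the first two coordinates) and the $1$-dimensional piece on $E'$ (the third coordinate). The plan is to show that any $\alpha\in \textup{N}_{\textup{Aut}(A)}(G)$ must preserve this splitting and hence split as a product $\alpha=\alpha_B\times \alpha_{E'}$ with $\alpha_B$ normalizing $G_2=G_{|B}$ and $\alpha_{E'}$ normalizing $G_1=G_{|E'}$. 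The hypothesis $\textup{End}_{\bQ}(E)\neq \bQ(\zeta_6)$ is exactly what rules out extra automorphisms of $E\times E$ that could mix the two factors in an unexpected way or produce an $\textup{Aut}_0$ that is too large; this is where I expect the representation-theoretic obstruction (via Remark \ref{lifting under isogeny}, controlling which linear maps actually descend to $A$ versus only to $A'$) to be invoked.

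The main technical step, and the one I expect to be the principal obstacle, is translating between the level of $A'=E\times E\times E'$ (where the group is $H$ of order $16$) and the level of $A=A'/w$ (where the group is $G=H/w\simeq\fD_4$). Since $w=s^2$ acts as the translation $\lambda\mapsto (\;\cdot\;,\;\cdot\;,z_3+2u_3)$ with $2u_3\in E'[2]$, passing to the quotient $A$ identifies points and correspondingly the normalizer $\textup{N}_{\textup{Aut}(A)}(G)$ should be computed as a normalizer upstairs in $\textup{Aut}(E\times E)\times\textup{Aut}(E')$ modulo the identifications coming from $w$. I would make this precise by showing that lifting along the isogeny $A'\to A$ (an application of Remark \ref{lifting under isogeny}) sets up a bijection between elements normalizing $G$ on $A$ and elements normalizing $H$ on $A'\!$, and that this bijection is compatible with the group structure after quotienting. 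The delicate part is checking that no spurious automorphism of $A$ arises which fails to lift to one of $A'$, and conversely that the $w$-action is correctly accounted for so that the two normalizers match on the nose; here the splitting $\textup{N}=\textup{N}_{\textup{Aut}(E\times E)}(\cdot)\times \textup{N}_{\textup{Aut}(E')}(\cdot)$, justified in the previous step, is essential because it lets me treat the $B$-factor and the $E'$-factor separately and verify the matching coordinate-wise.

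Finally I would assemble these pieces: the product decomposition of the normalizer, the level-change bijection via the $w$-isogeny, and Corollary \ref{aut GHV}, to conclude the claimed isomorphism $\textup{Aut}(X)\simeq \textup{N}_{\textup{Aut}(E\times E)\times\textup{Aut}(E')}(H)/H$. Throughout, the generic hypothesis on $\textup{End}_{\bQ}(E)$ guarantees that $\textup{Aut}_0(E\times E)$ is no larger than the $2\times 2$ matrices over $\textup{End}(E)$ compatible with the $\fD_4$-action, which keeps the normalizer finite and computable; I would flag that the excluded case $\textup{End}_{\bQ}(E)=\bQ(\zeta_6)$ is precisely where additional order-$6$ symmetries of $E$ enlarge the automorphism group, explaining why it is set aside here and presumably handled separately.
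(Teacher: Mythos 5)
Your overall architecture coincides with the paper's: reduce via Corollary \ref{aut GHV} to a statement about $\textup{N}_{\textup{Aut}(A)}(G)$, split that normalizer as $\textup{N}_{\textup{Aut}(B)}(G_2)\times\textup{N}_{\textup{Aut}(E')}(G_1)$ using the isotypic decomposition $\rho_2\oplus\rho_1$ (your justification of this splitting is actually more explicit than the paper's ``easy to observe''), and then pass between the levels $A'$ and $A=A'/w$ via Remark \ref{lifting under isogeny}. The genuine gap sits exactly at the step you yourself flag as delicate: proving that every $\alpha_B\in\textup{N}_{\textup{Aut}(B)}(G_2)$ lifts to $E\times E$. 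By Remark \ref{lifting under isogeny} this amounts to showing that the linear part of $\alpha_B$, which a priori only preserves the lattice of $B=(E\times E)/w_2$, in fact preserves the finer lattice of $E\times E$. Your proposal supplies no mechanism for this; it only asserts that the hypothesis $\textup{End}_{\bQ}(E)\neq\bQ(\zeta_6)$ ``rules out extra automorphisms''. Moreover, you locate the role of that hypothesis in bounding $\textup{Aut}_0(E\times E)$, which is beside the point: an automorphism whose linear part already lies in $\textup{Aut}_0(E\times E)$ is never problematic, since it lifts tautologically. The dangerous objects are finite-order automorphisms of $B$ whose linear parts preserve the lattice of $B$ but \emph{not} the sublattice coming from $E\times E$, and nothing in your argument excludes them.

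The paper closes this gap with three ingredients absent from your plan: first, $\textup{Aut}(X)$ is finite by \cite[Theorem 1.2 (1)]{Og}, hence $\textup{N}_{\textup{Aut}(A)}(G)$ is finite, which makes a classification of finite automorphism groups applicable; second, $B$ cannot split as a product of two isomorphic elliptic curves (shown by contradiction, using that an order-six matrix in $\textup{Aut}_0(B)$ would produce lifts normalizing $w_2$); third, Fujiki's tables \cite[Tables 8 and 9]{Fuj}, which --- precisely under the hypothesis $\textup{End}_{\bQ}(E)\neq\bQ(\zeta_6)$ and the non-splitting of $B$ --- show that the maximal finite automorphism group of $B$ containing $\fD_4$ is $\langle\rho_2(r),\rho_2(s)\rangle$ itself. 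Since these are integer matrices, they preserve the lattice of $E\times E$, and Remark \ref{lifting under isogeny} then yields the lift. Without these inputs, or a substitute for them, your argument does not go through: for example, when $\textup{End}_{\bQ}(E)=\bQ(i)$ (a case the Proposition must cover, since only $\bQ(\zeta_6)$ is excluded), the normalizer of $\rho_2(\fD_4)$ inside $\textup{GL}_2(\bQ(i))$ contains elements of order $8$, such as $\tfrac{1+i}{2}\left(\begin{smallmatrix}1 & -1\\ 1 & 1\end{smallmatrix}\right)$, and ruling these out as automorphisms of $B$ requires an explicit lattice computation or the appeal to Fujiki's classification --- not any statement about the size of $\textup{Aut}_0(E\times E)$.
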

\begin{proof}
According to Corollary \ref{aut GHV} we have $\text{Aut}(X)\simeq \dfrac{\text{N}_{\text{Aut}(A)}(G)}{G}$. We consider the following diagram:
\begin{center}
\begin{tikzcd}
 \textup{N}_{\textup{Aut}(A')}(H) \arrow[d, "\theta_2"']  \arrow[dr,"\theta" ]&  \\
  \textup{N}_{\textup{Aut}(A)}(G) \arrow[r,twoheadrightarrow, "\theta_1"']  & \textup{Aut}(X) 
 \end{tikzcd}
\end{center}
We prove the surjectivity of $\theta$ by proving the one of
$\theta_2$ and we proceed as follow.
It is easy to observe that $ \textup{N}_{\textup{Aut}(A)}(G) = \textup{N}_{\textup{Aut}(B)}(G_2) \times  \textup{N}_{\textup{Aut}(E')}(G_1)$ and $\textup{N}_{\textup{Aut}(A')}(H)=\textup{N}_{\textup{Aut}(E\times E)}(\langle G_2,w_2 \rangle)\times \textup{N}_{\textup{Aut}(E')}(G_1)$. 
Thus, to prove the surjectivity of $\theta_2$, it's enough to prove that every $\alpha_B$ admits a lift on $E\times E$. 
According to \cite[Theorem 1.2 (1)]{Og}, $\text{Aut}(X)$ is finite and since $G$ is finite then  $\text{N}_{\text{Aut}(A)}(G)$ is finite too. 
Thus, we aim to prove that the maximal automorphism group of finite order on $B$ admits a lift on $E\times E$. Furthermore, since $B$ admits an action of $\fD_4$ (induced by the action of $\fD_4$ on $E\times E$) we restrict our attention to the maximal automorphism group $G'$ of finite order on $B$ which contains a subgroup isomorphic to $\fD_4$.
We observe that $B$ can not split into the product of two isomorphic elliptic curves. If it did, the element $g:=\begin{pmatrix} 0 & -1 \\ 1 & 1 \end{pmatrix}\in \text{Aut}_0(B)$ and in particular $g$ would amdit two lifts $\widetilde{g_\epsilon}=g(\ul{z})+\epsilon w_2$ for $\epsilon\in\{0,1\}$ which belong to $\text{N}_{\text{Aut}(E\times E)}(w_2)$. This condition leads to a contradiction.
By \cite[Tables $8$ and $9$]{Fuj}  it follows that
whenever $\textup{End}_\bQ(E)\not\simeq \bQ(\zeta_6)$, $G'=\langle \rho_2(r),\rho_2(s) \rangle\simeq \fD_4 \subseteq \text{Aut}_0(E\times E)$.
Hence, by Remark \ref{lifting under isogeny}, $\alpha_B$ admits a lift to $E\times E$.
\end{proof}

\begin{corollary} \label{AutB}
With the notation above, every $\alpha_{A'}\in \textup{N}_{\textup{Aut}(E\times E)\times \textup{Aut}(E')}(H)$ admits a representation $\eta=\eta_2\oplus \eta_1$ on $\bC^3$ such that $\eta_2(\alpha_{A'})=\rho_2(r^js^i)$ for $j=0,1,2,3$ and $i=0,1$.
\end{corollary}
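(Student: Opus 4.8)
The plan is to reduce the whole statement to the factor $E\times E$ and then quote the maximality of $\rho_2(\fD_4)$ among finite automorphism groups that was already established inside the proof of Proposition \ref{Aut(X)}.

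First I would use that the normalizer is taken inside $\textup{Aut}(E\times E)\times \textup{Aut}(E')$, so $\alpha_{A'}$ respects the product decomposition $A'=(E\times E)\times E'$ and splits as $\alpha_{A'}=\alpha_{E\times E}\times\alpha_{E'}$, with $\alpha_{E\times E}\in \textup{N}_{\textup{Aut}(E\times E)}(\langle G_2,w_2\rangle)$ exactly as in Proposition \ref{Aut(X)}. Writing the analytic (linear) part of $\alpha_{A'}$ as $\eta=\eta_2\oplus\eta_1$ adapted to $\bC^3=\bC^2\oplus\bC$, the block $\eta_2$ is precisely the linear part of $\alpha_{E\times E}$. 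Since $\rho_2(w_2)=\rho_2(s)^2=I$, one has $\rho_2(\langle G_2,w_2\rangle)=\rho_2(\fD_4)=\langle\rho_2(r),\rho_2(s)\rangle=\{\rho_2(r^js^i):j=0,1,2,3,\ i=0,1\}$, so the entire assertion reduces to proving $\eta_2(\alpha_{A'})\in\rho_2(\fD_4)$.

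The key step is then a finiteness-plus-maximality argument. Taking linear parts is a group homomorphism, and since $\textup{Aut}(X)$ is finite by \cite[Theorem 1.2(1)]{Og}, the normalizer $\textup{N}_{\textup{Aut}(A')}(H)$ — hence its factor $\textup{N}_{\textup{Aut}(E\times E)}(\langle G_2,w_2\rangle)$ — is finite. Thus its image $\mathcal{L}:=\eta_2\big(\textup{N}_{\textup{Aut}(E\times E)}(\langle G_2,w_2\rangle)\big)$ is a finite subgroup of $\textup{Aut}_0(E\times E)$, all of whose elements have finite order, and it contains $\rho_2(\fD_4)$ because the $(E\times E)$-components of $r,s\in H$ have linear parts $\rho_2(r),\rho_2(s)$. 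Now I invoke the maximality proved in Proposition \ref{Aut(X)}: under the standing hypothesis $\textup{End}_\bQ(E)\neq\bQ(\zeta_6)$, \cite[Tables 8 and 9]{Fuj} give that $G'=\langle\rho_2(r),\rho_2(s)\rangle\simeq\fD_4$ is the maximal finite-order automorphism group containing a copy of $\fD_4$. Since $\mathcal{L}$ is such a group, maximality forces $\mathcal{L}\subseteq G'$, and the reverse inclusion already holds, so $\mathcal{L}=G'=\rho_2(\fD_4)$. Therefore $\eta_2(\alpha_{A'})=\eta_2(\alpha_{E\times E})\in\rho_2(\fD_4)=\{\rho_2(r^js^i)\}$, as claimed; the block $\eta_1$ merely records the (unconstrained) action of $\alpha_{E'}$ on the $E'$-factor.

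I expect the only genuine difficulty to be the assertion that $\mathcal{L}$ cannot be strictly larger than $\rho_2(\fD_4)$, i.e. that no finite-order automorphism of $E\times E$ normalizing $\rho_2(\fD_4)$ escapes $G'$. This is exactly where the hypothesis $\textup{End}_\bQ(E)\neq\bQ(\zeta_6)$ enters — it excludes the extra symmetries coming from the hexagonal lattice — and it is precisely the content of the Fujiki classification already exploited in Proposition \ref{Aut(X)}. Once that input is granted, the corollary is immediate.
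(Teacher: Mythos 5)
Your reduction steps (splitting $\alpha_{A'}$ along $A'=(E\times E)\times E'$, reducing to $\eta_2(\alpha_{A'})\in\rho_2(\fD_4)$, and getting finiteness of the normalizer from \cite{Og} via Corollary \ref{aut GHV}) are sound and match what the paper does implicitly; the gap is in the maximality step, which you apply to the wrong torus, and on that torus it is false. The Fujiki input exploited in Proposition \ref{Aut(X)} is a statement about $B=(E\times E)/w_2$: the proposition first proves that $B$ does \emph{not} split as a product of two isomorphic elliptic curves, and only because of this do Tables 8 and 9 of \cite{Fuj} give that $\fD_4$ is the maximal finite-order automorphism group of $B$ containing a copy of $\fD_4$. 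Your $\mathcal{L}$, however, is only known to be a finite subgroup of $\textup{Aut}_0(E\times E)$, and $E\times E$ visibly does split as a product of two isomorphic elliptic curves. For such a torus the maximality fails under the standing hypothesis: only $\textup{End}_\bQ(E)=\bQ(\zeta_6)$ is excluded, so $E$ may have complex multiplication by $\bZ[i]$, in which case the scalar $iI\in\textup{Aut}_0(E\times E)$ has finite order, centralizes (hence normalizes) $\rho_2(\fD_4)$, and generates together with it a finite group of order $16$ strictly containing $\rho_2(\fD_4)$. So your closing claim that ``no finite-order automorphism of $E\times E$ normalizing $\rho_2(\fD_4)$ escapes $G'$'' --- and that only the hexagonal lattice needs excluding --- is wrong as stated, and ``maximality forces $\mathcal{L}\subseteq G'$'' is a non sequitur, since $\mathcal{L}$ has not been shown to consist of automorphisms of $B$.

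The gap is fixable, and the fix is precisely the paper's passage to $B$. If $\alpha\in \textup{N}_{\textup{Aut}(E\times E)}(\langle G_2,w_2\rangle)$ has linear part $M$, then $\alpha w_2\alpha^{-1}$ is a nontrivial translation lying in $\langle G_2,w_2\rangle$; since the only translations in that group are $\textup{id}$ and $w_2$ (the kernel of the linear-part homomorphism restricted to it), necessarily $\alpha w_2\alpha^{-1}=w_2$, i.e. $M\bigl(\tfrac12,\tfrac12\bigr)\equiv\bigl(\tfrac12,\tfrac12\bigr)$ modulo the lattice of $E\times E$. Hence $M$ preserves the lattice of $B$, so $\mathcal{L}\subseteq\textup{Aut}_0(B)\cap\textup{Aut}_0(E\times E)$; note that the element $iI$ above fails exactly this test, because $(i/2,i/2)$ does not lie in the lattice of $B$. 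Once $\mathcal{L}$ is placed inside $\textup{Aut}_0(B)$, the maximality established in Proposition \ref{Aut(X)} applies verbatim and yields $\mathcal{L}=\rho_2(\fD_4)$, which completes your argument and recovers the corollary.
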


\noindent \textbf{Assumption:} From now on we assume $\text{End}(E)\not\simeq\bQ(\zeta_6)$.

\begin{lemma}\label{teclemma} Let $\alpha_{A'}\in \textup{N}_{\text{Aut}(E\times E)\times \text{Aut}(E')}(H)$ such that $\alpha_{A'}(\ul{z})=\eta(\alpha_{A'})\ul{z}+t_{\alpha_{A'}}$ where $t_{\alpha_{A'}}$ is the translation by the point $=(t_1, t_2, t_3)$ and $\eta$ is a representation of $\alpha_{A'}$ on $\bC^3$. The following conditions hold :
\begin{center}
    $\eta(\alpha_{A'})=\rho(r^js^i)$ \quad for some $j=0,1,2,3$ \quad $i=1,2$ \\
    $t_1+t_2\in \{0,\frac{1}{2}\} \quad t_1\in E[2]  \quad 2t_3=\begin{cases} 0 & \text{ if }j=0,2 \\
    \dfrac{1}{2} & \text{ if } j=1,3 . \end{cases}$
\end{center}
\end{lemma}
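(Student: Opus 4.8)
The plan is to characterize $\textup{N}_{\textup{Aut}(E\times E)\times\textup{Aut}(E')}(H)$ by imposing the normalizing condition element by element. Write $\alpha_{A'}(\ul z)=\eta(\alpha_{A'})\ul z+t_{\alpha_{A'}}$ with $t_{\alpha_{A'}}=(t_1,t_2,t_3)$. By Corollary \ref{AutB}, the linear part is already constrained: $\eta_2(\alpha_{A'})=\rho_2(r^js^i)$ with $j\in\{0,1,2,3\}$, $i\in\{0,1\}$, and since $\alpha_{A'}$ must preserve the factor $E'$ and normalize $G_1\simeq\fD_4$, the third-coordinate part $\eta_1(\alpha_{A'})$ is $\pm 1$ compatibly with the chosen word in $r,s$. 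First I would fix the global recipe: $\alpha_{A'}$ lies in the normalizer of $H=\langle r,s\rangle$ if and only if $\alpha_{A'} h\,\alpha_{A'}^{-1}\in H$ for $h\in\{r,s\}$; since $\rho$ is faithful and $H$ acts through $\rho$ plus the fixed translation parts $u_1,u_2,u_3$ of \eqref{group H2}, the condition splits into a \emph{linear} part (automatically satisfied because $\langle\rho(r),\rho(s)\rangle$ is a group and the $\rho_2$-action of $\alpha_{A'}$ sits inside it by Corollary \ref{AutB}) and a \emph{translation} part, which is an affine congruence on $(t_1,t_2,t_3)$ modulo the lattice.

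The core computation is the translation congruence. Conjugating $r$ and $s$ by the affine map $\alpha_{A'}$ produces new translation vectors built from $\eta(\alpha_{A'})$ applied to $u_1,u_2,u_3$ and from $(\mathrm{Id}-\eta(\alpha_{A'})\rho(h)\eta(\alpha_{A'})^{-1})$ applied to $t_{\alpha_{A'}}$; requiring the result to coincide, modulo $\Lambda$, with an actual element $r^{j'}s^{i'}$ of $H$ yields the stated constraints. I would carry this out separately in the first two coordinates and in the third. In the first two coordinates the relevant operators are $\mathrm{Id}-\rho_2(r^js^i)$ and its conjugates; reducing modulo the lattice $\Lambda_1\times\Lambda_2$ and using $u_1,u_2\in(E\times E)[2]$ forces $t_1\in E[2]$ together with the symmetric condition $t_1+t_2\in\{0,\tfrac12\}$ (the two values corresponding to whether the conjugation fixes or swaps the role of $u_1,u_2$, i.e.\ to the parity $i$). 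In the third coordinate the only data are $u_3\in E'[4]$ and the sign $\eta_1(\alpha_{A'})=(-1)^{?}$ attached to $r^js^i$; the congruence $(1-(\pm1))t_3\equiv(\text{something built from }u_3)\ \mathrm{mod}\ \Lambda_3$ collapses, when the sign is $+1$ (parity $j$ even) to a trivial condition giving $2t_3=0$, and when the sign is $-1$ (parity $j$ odd) to $2t_3=\tfrac12$, since then $2t_3$ must absorb the half-period coming from $u_3=\tfrac14$. This reproduces exactly the case split on $j=0,2$ versus $j=1,3$ in the statement.

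I would present this as: (1) reduce normalization to the two conjugation relations; (2) dispatch the linear part via Corollary \ref{AutB}; (3) solve the affine congruence in the $B$-factor to get $t_1\in E[2]$ and $t_1+t_2\in\{0,\tfrac12\}$; (4) solve it in the $E'$-factor to get the $2t_3$ dichotomy. The main obstacle I anticipate is step (3)–(4) bookkeeping: one must track that the lattice translations $u_1,u_2,u_3$ transform correctly under $\eta(\alpha_{A'})$ and that equality is only required \emph{modulo} the relation $w=s^2$ (recall $X=A'/H=A/G$ with $A=A'/w$), so some congruences that look nontrivial on $A'$ are already satisfied after passing to $A$. Concretely, the delicate point is justifying that no further constraints on $t_3$ arise from conjugating $s$ (whose third-coordinate action is $z_3\mapsto -z_3$): one must check that the potential condition on $t_3$ coming from $s$-conjugation is implied by, rather than independent of, the condition coming from $r$-conjugation, which is where the precise value $u_3=\tfrac14$ and the order-four structure of $E'[4]$ enter. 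I would verify this compatibility explicitly, as it is the step most prone to an off-by-a-half-period error.
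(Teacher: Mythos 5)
Your overall strategy coincides with the paper's (whose proof is a one-line ``compute the normalizer condition using Corollary \ref{AutB}''): impose $\alpha_{A'}h\alpha_{A'}^{-1}\in H$ for $h\in\{r,s\}$, and your general formula for the translation part of the conjugate, $(\mathrm{Id}-\eta(\alpha_{A'})\rho(h)\eta(\alpha_{A'})^{-1})t_{\alpha_{A'}}+\eta(\alpha_{A'})u_h$, is correct. But your case analysis misattributes where each constraint comes from, most seriously in the third coordinate, and as written that step would fail. Since $\eta_2(\alpha_{A'})=\rho_2(r^js^i)$, the conjugate of $r$ is congruent mod $\Lambda$ to $r^{(-1)^i}$ or $r^{(-1)^i}w$ (here $w=s^2$ is the translation by $(\tfrac12,\tfrac12,0)$), whose third-coordinate linear part is $+1$; so the operator acting on $t_3$ is $1-1=0$ and the third-coordinate congruence reads $\eta_1(\alpha_{A'})u_3\equiv \pm u_3 \pmod{\Lambda_3}$. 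This imposes \emph{no} condition on $t_3$; what it does is force the sign coupling $\eta_1(\alpha_{A'})=(-1)^i$, which is precisely the nontrivial part of the claim $\eta(\alpha_{A'})=\rho(r^js^i)$ that you gloss over by saying the third coordinate is ``$\pm1$ compatibly with the chosen word''. Conversely, the whole $t_3$ dichotomy comes from conjugating $s$: the conjugate is congruent to $r^{2j}s$ or $r^{2j}sw$, which acts on the third coordinate by $z_3\mapsto -z_3+2ju_3$, so the congruence is always $2t_3\equiv 2ju_3\pmod{\Lambda_3}$, and the split $2t_3=0$ ($j$ even) versus $2t_3=\tfrac12$ ($j$ odd) comes from the inhomogeneous term $2ju_3\in\{0,\tfrac12\}$, not from a sign $(1-(\pm1))$ in front of $t_3$. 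Your sentence ``when the sign is $+1$ \dots\ a trivial condition giving $2t_3=0$'' is a non sequitur (a trivial identity gives no constraint), and the sign you attach to the parity of $j$ is in fact $(-1)^i$, attached to the parity of $i$. Consequently your flagged ``delicate point'' is inverted: it is not that $s$-conjugation might add constraints on $t_3$ beyond those from $r$-conjugation; $r$-conjugation gives none, and $s$-conjugation gives all of them.

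A smaller slip of the same kind: the two values $t_1+t_2\in\{0,\tfrac12\}$ do not correspond to the parity of $i$. Both values occur for every pair $(i,j)$ --- as the paper's equation \eqref{representation on A'} and Theorem \ref{mainth1} require --- and they correspond instead to whether the conjugated generator is matched in $H$ with $h'$ or with $h'w$, i.e.\ to absorbing the central translation $w$. With these corrections the computation closes exactly as the paper intends: $r$-conjugation in the first two coordinates gives $t_1\in E[2]$ and $t_1+t_2\in\{0,\tfrac12\}$ and in the third coordinate forces $\eta_1(\alpha_{A'})=(-1)^i$; $s$-conjugation gives $2t_3=2ju_3$ and only redundant conditions on $(t_1,t_2)$.
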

\begin{proof} 
By computing the condition $\alpha_{A'}\in \text{N}_{\text{A'}}(H)$ and using Corollary \ref{AutB}, one find the relations in the statement.
\end{proof}

An easy check shows that every $\alpha_{A'}\in\textup{N}_{\text{Aut}(A')\
}(H)$ as the following characterization:
\begin{equation}\label{representation on A'}
\alpha_{A'}(z)=\rho(r^js^i)(z)+\begin{cases}t_r+t_{\alpha_{A'}} & \text{ if } j=1,3, \forall i \\  
t_{\alpha_{A'}} & \text{ if } j=0,2, \forall i \end{cases} \text{ with $t_{\alpha_{A'}}$ satisfying \eqref{translation}}. 
\end{equation}

\begin{proof}[Proof of Theorem \ref{mainth1}]
By Lemma \ref{teclemma} we have $\abs{\text{N}_{\text{Aut}(A)}(H)}=2^8$, hence $\abs{\text{Aut}(X)}=2^4$.
Let us consider the following translations in $\text{N}_{\text{Aut}(A)}(H)$:
\begin{equation}\label{trasn2}
\begin{cases}t_r+t_{\alpha_{A'}}-t_{r^js^i} & \text{ if }j=1,3, \forall i \\ t_{\alpha_{A'}}+ t_{r^js^i} & \text{ if }j=0,2, \forall i\end{cases}
\end{equation}
with $t_{\alpha_{A'}}$ satisfying $\eqref{translation}$.
An easy computation shows that the translations above and every $\alpha_{A'}$ in $\text{N}_{\text{Aut}(A')}(H)$ differ by an element in $H$, hence they induce the same automorphism on $X$. Therefore, every $\alpha_X\in \text{Aut}(X)$ is induced by a translation on $A'$ by a point $(t_1,t_2,t_3)$ satisfying \eqref{translation}. Thus $\text{Aut}(X)$ is abelian and so $\text{Aut}(X)\simeq (\bZ/2\bZ)^4$. Last statement follows since every $\alpha_X$ is induced by a translation $\alpha_{A'}$ and $\omega_X=(\pi_H)_\ast\bigl(\omega_{A'}\bigr)$.
\end{proof}
\section{The quotients of $X\in \cF^A_{\fD_4}$}\label{sec: quotient X}
\noindent In this section we classify all the possible quotients of $X\in \cF^A_{\fD_4}$.\\

We observe that for $\alpha_X\in \text{Aut}(X)$
\begin{equation}\label{fix-equation}
 \text{Fix}(\alpha_X)=\pi_H(\{(\ul{z})\in A'   \mid  \exists h\in H \text{ with } \alpha_{A'}(\ul{z})=h(\ul{z})\})   
\end{equation}
where $\pi_H: A'\longrightarrow X=\dfrac{A'}{H}$ and $\alpha_{A'}$ is a lift of $\alpha_X$ on $A'$.

\begin{theorem}\label{mainth2}
Let $X$ be as in Definition \ref{manifoldX} and $\Upsilon\le \textup{Aut}(X)$. Then, each quotient $X/\Upsilon$ admits a crepant resolution $\beta: Y \longrightarrow X/\Upsilon$ where $Y$ is a Calabi-Yau $3$-fold.
In particular, there exist exactly $2$ automorphisms $(\alpha_1)_X$ and $(\alpha_2)_X$ acting freely on $X$ induced respectively by 
$ \alpha_1(\ul{z}):=(z_1, z_2, z_3+\frac{\tau'}{2})$ and $\alpha_2(\ul{z}):=(z_1, z_2, z_3+\frac{\tau'}{2}+\frac{1}{2}) $. Moreover, the $\dfrac{X}{(\alpha_j)_X}$'s belong to $\cF^A_{\mathfrak{D}_4}$.

\end{theorem}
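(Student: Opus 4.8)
The plan is to analyze the quotient $X/\Upsilon$ by lifting everything to the abelian cover $A'$ and exploiting the fact, established in Theorem \ref{mainth1}, that every $\alpha_X\in\textup{Aut}(X)$ is induced by an order-two translation on $A'$ satisfying \eqref{translation}, and hence preserves the volume form $\omega_X$. First I would fix $\Upsilon\le\textup{Aut}(X)$ and observe that since $\Upsilon$ is a subgroup of the abelian group $(\bZ/2\bZ)^4$, it preserves $\omega_X$. The strategy then splits into determining the fixed loci using \eqref{fix-equation}: for each nontrivial $\upsilon\in\Upsilon$ with lift $\alpha_{A'}$, I would solve $\alpha_{A'}(\ul z)=h(\ul z)$ for $h\in H$ up to the lattice, translating the problem into a finite system of equations on the torus. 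Because the translations satisfy \eqref{translation} and have order two, each such $\upsilon$ is an involution preserving $\omega_X$, so Proposition \ref{quotientCYthreefold} applies: its fixed locus is either empty or a finite disjoint union of smooth curves.

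Next I would establish the crepant resolution claim. Since each nontrivial element of $\Upsilon$ is a volume-preserving involution, the local linearization near any fixed point is $\textup{diag}(-1,-1,1)$ by \cite[Lemma 1]{Car}, so the singularities of $X/\Upsilon$ are transverse $A_1$-type (compound du Val) singularities along the fixed curves. Such singularities admit a crepant resolution $\beta\colon Y\to X/\Upsilon$ — one blows up the singular locus, and by the local model the exceptional divisor contributes discrepancy zero, giving $\cK_Y=\beta^\ast\cK_{X/\Upsilon}$. To conclude $Y$ is Calabi-Yau I would invoke the argument in the proof of Proposition \ref{quotientCYthreefold}: triviality of $\cK_Y$ follows from \cite[Theorem 1]{Roan96}, and the vanishing $h^{1,0}(Y)=h^{2,0}(Y)=0$ follows since these are birational invariants and vanish on $X$ (hence on $X/\Upsilon$).

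The remaining and most delicate part is the explicit count of free automorphisms and the identification of the free quotients with members of $\cF^A_{\fD_4}$. I would run the fixed-point analysis of \eqref{fix-equation} over all $15$ nontrivial elements of $\textup{Aut}(X)$, showing that freeness forces the linear part $\eta(\alpha_{A'})$ to be the identity (so $j=0$, $i=0$) and then solving the resulting translation conditions $t_1+t_2=0$, $t_1=0$, together with the requirement that no $h\in H$ admits a fixed point of $\alpha_{A'}h^{-1}$; this should single out exactly the two translations $\alpha_1(\ul z)=(z_1,z_2,z_3+\tfrac{\tau'}{2})$ and $\alpha_2(\ul z)=(z_1,z_2,z_3+\tfrac{\tau'}{2}+\tfrac12)$. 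For these, $X/(\alpha_j)_X=A'/H'$ where $H'=\langle H,\alpha_j\rangle$ is a free group extension of order $16$ containing $H\simeq\fD_4$ as a normal subgroup without translations, so by Remark \ref{modulispace} and Theorem \ref{family d4} the quotient again lies in $\cF^A_{\fD_4}$. I expect the main obstacle to be the bookkeeping in the fixed-point computation over the $4$-torsion point $u_3\in E'[4]$: the translations in the $z_3$-factor interact with $s$ (which acts as $-z_3$) and with the $4$-torsion shift, so verifying that exactly the two listed translations act freely — and that all other nontrivial automorphisms genuinely fix curves — requires careful case analysis on the elliptic curve $E'$.
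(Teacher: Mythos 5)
Your overall strategy matches the paper's (reduce to Theorem \ref{mainth1}, analyze fixed loci via \eqref{fix-equation}, count free translations, identify the free quotients), but two steps as you've written them contain genuine gaps.

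First, the crepant resolution step. You claim that because each nontrivial $\upsilon\in\Upsilon$ linearizes as $\textup{diag}(-1,-1,1)$ near its fixed points, the singularities of $X/\Upsilon$ are transverse $A_1$ and a single blow-up of the singular locus is crepant. This does not follow: the local structure of $X/\Upsilon$ at a point $x$ is governed by the full stabilizer $\Upsilon_x$, not by individual elements. If the fixed curves of two distinct involutions in $\Upsilon$ meet at $x$, then $\Upsilon_x\supseteq(\bZ/2\bZ)^2$ and the local model is $\bC^3/(\bZ/2\bZ)^2$, which is not a transverse $A_1$ singularity (it still admits a crepant resolution, but not by your argument). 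Ruling this out requires a case check of the fixed curves in Table \ref{table2}, which you do not perform. Your fallback appeal to Proposition \ref{quotientCYthreefold} is also not directly available, since that proposition is stated only for a single involution. The paper circumvents all of this by iteration: quotient by one involution $\alpha_X$ at a time, resolve to a Calabi-Yau $\widetilde X$ via Proposition \ref{quotientCYthreefold}, lift the residual abelian group $\Upsilon/\langle\alpha_X\rangle$ to a volume-preserving action on $\widetilde X$, and repeat. Either that iteration, or a direct appeal to Roan's theorem for the full (not cyclic) volume-preserving group, is needed here.

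Second, the identification $X/(\alpha_j)_X\in\cF^A_{\fD_4}$. Your argument rests on the assertion that $H'=\langle H,\alpha_j\rangle$ has order $16$ and contains $H\simeq\fD_4$ as a normal subgroup without translations. This is false on all counts: $H=\langle r,s\rangle$ itself has order $16$, it is not isomorphic to $\fD_4$ (rather $H/\langle w\rangle\simeq\fD_4$ with $w=s^2$), and it \emph{does} contain a translation, namely $w$. Hence $\langle H,\alpha_j\rangle$ has order $32$ and is not of the shape required by Remark \ref{modulispace}, so membership in the family does not follow as you claim. The missing idea is precisely the paper's maneuver: realize $Y_j=X/(\alpha_j)_X$ as the quotient of the \emph{abelian} $3$-fold $A_j=A'/\langle\alpha_j,w\rangle$ by the group $\langle\alpha_j,H\rangle/\langle\alpha_j,w\rangle$, which acts freely, has order $8$, and contains no translations; then the Oguiso--Sakurai classification \cite[Theorem 0.1]{OS} forces this group to be $\fD_4$, so $Y_j$ is a Calabi-Yau $3$-fold of type $A$ with group $\fD_4$ and therefore lies in $\cF^A_{\fD_4}$ by Theorem \ref{family d4}. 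Without passing to this intermediate abelian cover, the last claim of the theorem is unproved.
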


\begin{proof}
By Theorem \ref{mainth1}, $\Upsilon\simeq (\bZ/2\bZ)^m$ for some $1\le m\le 4$ and every $\upsilon$ in $\Upsilon$ preserves the volume form of $X$. Since $\Upsilon$ is abelian, we can split the quotient $X/\Upsilon$ in a subsequent quotients of order two. 
Let $\alpha_X\in \Upsilon$: according to Proposition \ref{quotientCYthreefold}, $X/\alpha_X$ is birational to a Calabi-Yau $3$-fold $\widetilde{X}$. 
$\Upsilon_1:=\Upsilon/\langle\alpha_X\rangle\simeq  (\bZ/2\bZ)^{m-1}$: since $\Upsilon$ is abelian and preserves $\omega_X$ then 
$\Upsilon_1$ lifts to an action on $\widetilde{X}$ which preserves the volume form $\omega_{\widetilde{X}}$. Thus, by iterating the argument above we conclude that $X/\Upsilon$ admits resolution of singularities $\beta: Y \rightarrow X/\Upsilon$ with $Y$ a Calabi-Yau $3$-fold and so $\beta$ is a crepant resolution.\\
Let $\alpha_{A'}(\ul{z})=(z_1+t_1, z_2+t_2, z_3+t_3)\in \text{N}_{\text{Aut}(A')}(H)$. It induces a free action on $X$ if and only if  for every $h\in H$ the equation $\alpha_{A'}(\ul{z})=h(\ul{z})$ has no solutions: by a direct computation this happens if and only if $t_1=t_2\in \{0,\frac{1}{2}\}$ and $t_3\in\{\frac{\tau'}{2},\frac{\tau'+1}{2}\}$.
Fixed $t_3$ we have 
$(z_1+\frac{1}{2}, z_2+\frac{1}{2}, z_3+t_3)$ and $(z_1, z_2, z_3+t_3)$ differ by $w\in H$,
hence they define the same automorphism on $X$. Therefore, there are only two auto\-morphisms which act freely on $X$ denoted by $(\alpha_j)_X$ as in the statement for $j=1,2$.
Let us denote by $Y_j:=X/(\alpha_j)_X$. As we prove above, they are Calabi-Yau $3$-folds. By construction $Y_j= \dfrac{A'}{\langle \alpha_{j},H \rangle}$.
It is easy to see that $Y_j$ can be also obtain as free quotients of the abelian $3$-fold $A_j=\dfrac{A'}{\langle \alpha_{j},w \rangle}$  by the free action of $\dfrac{\langle \alpha_j, H\rangle }{\langle \alpha_{j},w \rangle}$. This latter group  does not contain any translation and has cardinality $8$. Thus $Y_j$'s are Calabi-Yau $3$-folds of type $A$ with a group of order $8$. By the classification \cite[Theorem 0.1]{OS} the only possibility is $\langle r_{A_j},s_{A_j}\rangle \simeq \mathfrak{D}_4$. 
\end{proof}
 
We compute $\text{Fix}(\alpha_X)$ as varying $\alpha_X\in \text{Aut}(X)$.
We use \eqref{fix-equation} and we denote by $\alpha_{A'}(\ul{z})=(z_1+t_1, z_2+t_2, z_3+t_3)$ a lift of $\alpha_X$ on $A'$ as in Theorem \ref{mainth1}. 
We refer to \ref{sec: fixlocus} for the proof. 
Let us fix $t_1\in E[2]$, $t_1+t_2\in \{0,\frac{1}{2}\}$ and $t_3\in E'[2]$ and we define the elliptic curves:
\begin{equation}\label{elliptic-curves}
 \resizebox{0.6\hsize}{!}{
    $\begin{split}
        &C^{1}_{p,q}=\{ (p,q,l)\in A' \mid l\in E', 2p=t_1, 2q=t_2\}  \\
        &C^{2}_{p,q}=\{ (p,q,l)\in A' \mid l\in E', 2p=t_1+\frac{1}{2}, 2q=t_2+\frac{1}{2}\}\\
        &C^{3}_{q,t}=\{ (p,q,l)\in A' \mid p\in E, 2q=\frac{1}{2}, 2l=\frac{1}{4}+t_3\} \\
        &C^{4}_{p,t}=\{ (p,q,l)\in A' \mid q\in E, 2p=\frac{1}{2}, 2l=\frac{3}{4}+t_3\} \\
        &C^{5,t_1}_l=\{(p,-p+u_1+t_1,l)\in A' \mid p\in E, 2l=\frac{1}{2}+t_3\} \\
        &C^{6,t_1}_l=\{(p,-p+u_2+t_1,l)\in A' \mid p\in E, 2l=\frac{1}{2}+t_3\} \\
        &C^{7,t_1}_l=\{(p,p+u_1+t_1,l)\in A' \mid p\in E, 2l=t_3\} \\
        &C^{8,t_1}_l=\{(p,p+u_2+t_1,l)\in A' \mid p\in E, 2l=t_3\} \\
        &C^{9}_{q,l}=\{(p,q,l)\in A' \mid p\in E, 2q=0, 2l=\frac{1}{4}+t_3\} \\
        &C^{10}_{p,l}=\{(p,q,l)\in A' \mid q\in E, 2p=0, 2l=\frac{3}{4}+t_3\} 
    \end{split}$
    }
\end{equation}

\begin{table}[H]
      \scalebox{0.7}{
    \centering
\begin{tabular}{|c|c|c|c|c|}
\hline \rule[-4mm]{0mm}{1cm}
 $t_1$ & $t_2$ & $t_3$ & $\textup{Fix}(\alpha_X)$ & $\abs{\textup{Fix}(\alpha_X)}$  \\ \hline \rule[-4mm]{0mm}{1cm}
 $0$ & $0$ & $\frac{\tau'}{2}$ & $\emptyset$ & $0$\\
 \hline \rule[-4mm]{0mm}{1cm}
  $0$ & $0$ & $\frac{\tau'+1}{2}$ & $\emptyset$ & $0$ \\
 \hline \rule[-4mm]{0mm}{1cm}
$0$ & $0$ & $\frac{1}{2}$ & $\pi_H(C^1_{0,0})$,  $\pi_H(C^{1}_{\frac{\tau}{2},\frac{\tau}{2}})$,  $\pi_H(C^{1}_{0,\frac{\tau}{2}})$, $\pi_H(C^{2}_{\frac{1}{4},\frac{1}{4}})$, $\pi_H(C^{2}_{\frac{1}{4},\frac{1}{4}+\frac{\tau}{2}})$ & $5$ 
\\   \hline \rule[-4mm]{0mm}{1cm}
$\frac{\tau}{2}$ & $\frac{\tau}{2}$ & $\frac{1}{2}$ & 
$\pi_H(C^{1}_{\frac{\tau}{4},\frac{\tau}{4}})$, $\pi_H(C^{2}_{\frac{\tau+1}{4},\frac{\tau+1}{4}})$, $\pi_H(C^3_{\frac{1}{4},\frac{3}{8}})$, $\pi_H(C^3_{\frac{1}{4},\frac{3}{8}+\frac{\tau'}{2}})$ & $4$ 
\\ \hline \rule[-4mm]{0mm}{1cm}
$\frac{\tau}{2}$ & $\frac{\tau}{2}$ & $\not=\frac{1}{2}$ & $\pi_H(C^3_{\frac{1}{4},\beta})$, $\pi_H(C^3_{\frac{1}{4},\beta+\frac{\tau'}{2}})$  \small\text{ with $2\beta=\frac{1}{4}+t_3$}& $2$  \\
\hline \rule[-4mm]{0mm}{1cm}
$\frac{\tau}{2}$  & $\frac{\tau+1}{2}$ & $\frac{1}{2}$ & $\pi_H(C^{1}_{\frac{\tau}{4},\frac{\tau+1}{4}})$,  $\pi_H(C^{1}_{\frac{\tau}{4},\frac{3\tau+1}{4}})$,  $\pi_H(C^{9}_{0,\frac{3}{8}})$,  $\pi_H(C^{9}_{0,\frac{3}{8}+\frac{\tau'}{2}})$,  $\pi_H(C^{5,\frac{\tau}{2}}_{0})$, $\pi_H(C^{5,\frac{\tau}{2}}_{\frac{\tau'}{2}})$ & $6$ 
 \\  \hline \rule[-4mm]{0mm}{1cm}
 $\frac{\tau}{2}$  & $\frac{\tau+1}{2}$ & $\not=\frac{1}{2}$ &  $\pi_H(C^{9}_{0,\beta})$, $\pi_H(C^{9}_{0,\beta+\frac{\tau'}{2}})$, $\pi_H(C^{5,\frac{\tau}{2}}_{\gamma})$, $\pi_H(C^{5,\frac{\tau}{2}}_{\gamma+\frac{\tau'}{2}})$ \small\text{ with $2\beta=\frac{1}{4}+t_3$ and $2\gamma=\frac{1}{2}+t_3$} & $4$ 
  \\  \hline \rule[-4mm]{0mm}{1cm}
 
 $0$ & $\frac{1}{2}$ & $\frac{1}{2}$ &  $\pi_H(C^{1}_{0,\frac{1}{4}})$,   $\pi_H(C^{1}_{0,\frac{1}{4}+\frac{\tau}{2}})$,  $\pi_H(C^{5,0}_{0})$, $\pi_H(C^{5,0}_{\frac{\tau'}{2}})$ & $4$ 
 
 \\ \hline \rule[-4mm]{0mm}{1cm}
 $0$ & $\frac{1}{2}$ & $\not=\frac{1}{2}$ & $\pi_H(C^{5,0}_{\gamma})$, $\pi_H(C^{5,0}_{\gamma+\frac{\tau'}{2}})$  \small\text{ with $2\gamma=\frac{1}{2}+t_3$} & $2$ \\ \hline 
\end{tabular} }
\caption{The fixed locus of $\alpha_X$ on $X$}
\label{table2}
\end{table}

\section{Hodge numbers and fundamental group of quotients of $X\in \cF^A_{\fD_4}$}\label{section8}
We classify the Calabi-Yau $3$-folds $Y$ as in Theorem \ref{mainth2}.
We summarize our results in a series of tables: if $\Upsilon\simeq (\bZ/2\bZ)$ in Table \ref{table3},
if $\Upsilon\simeq (\bZ/2\bZ)^2$ in Table \ref{table10},
if $\Upsilon\simeq (\bZ/2\bZ)^3$ in Table \ref{table12},
if $\Upsilon\simeq (\bZ/2\bZ)^4$ in Table \ref{table14}.

\subsection{Hodge numbers of crepant resolution of $X/\Upsilon$}
We briefly summarize the techniques to compute Hodge number of $Y$ as in Theorem \ref{mainth2} via the orbifold cohomology (cf. \cite{CR}). \\

\noindent Let $M$ be a manifold and $\alpha_M$ be an involution on $M$.
The action of $\alpha_M$ can be locally linearized near $m\in \text{Fix}(\alpha_M)$ by $\text{diag}\bigl((-1)^{a_1(m)},\dots,(-1)^{a_n(m)}\bigr)$ with $a_i(m)\in {0,1}$.
We define \emph{the age of $\alpha_M$ at $m$} as $age(\alpha_M):=\dfrac{\sum\limits_{i=1}^n a_i(m)}{2}$.\\

We recall the formula of orbifold cohomology in the situation which is of our interest, see \cite[Definition 3.2.4]{CR}: $X$ is the Calabi-Yau $3$-fold constructed in Section \ref{sec: familyd4} and $\Upsilon\le \text{Aut(X)}$ is characterized by Theorem \ref{mainth1}. 
Since $\text{Aut(X)}$ is abelian, the set of representatives of the conjugacy classes of $\Upsilon$ is equal to $\Upsilon$ and the centralizer of each element is $\Upsilon$ itself. Let $F^i_{\alpha_X}$ be an irreducible component of $\text{Fix}(\alpha_X)$ for each $id_X\not=\alpha_X\in \Upsilon$. As we have observed in the proof of Proposition \ref{quotientCYthreefold}, the action of $\alpha_X$ can be locally linearized as $\text{diag}(1,-1,-1)$ 
hence the age of $\alpha_{X}$ in a fixed point is equal to $1$. The orbifold cohomology of $X/\Upsilon$ is:

\begin{equation}\label{orbifoldscohomology}
  H^{p,q}_{orb}(X/\Upsilon):=H^{p,q}(X)^\Upsilon\oplus \bigoplus_{id_X\not=\alpha_X\in \Upsilon} \bigoplus_{i} H^{p-1,q-1}(F^i_{\alpha_X}/\Upsilon). 
\end{equation}
We denote by $h^{p,q}_{orb} (X/\Upsilon)$ the dimension of $H^{p,q}_{orb}(X/\Upsilon)$.  We also recall the following important result:

\begin{theorem}[Theorem $5.4$ and  Corollary $6.15$, \cite{BD96}]\label{hodgeorbifold} Let $M$ be a compact, K\"ahler, complex manifold of dimension $n$ with trivial canonical bundle and equipped with an action of a finite group $G'$ such that preserves the volume form of $M$. Assume the existence of a crepant resolution $\beta: \widetilde{M}\longrightarrow M/G'$. Then $$ h^{p,q}(\widetilde{M})=h^{p,q}_{orb}(M/G').$$
\end{theorem}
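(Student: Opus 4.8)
The plan is to deduce the statement from the theory of stringy invariants and motivic (equivalently $p$-adic) integration, following Batyrev and Denef--Loeser; we work in the projective algebraic setting, which covers the application since $X$ is projective. The central bookkeeping device is the Hodge--Deligne $E$-polynomial $E(V;u,v)=\sum_{p,q}e^{p,q}(V)\,u^pv^q$, where $e^{p,q}(V)=\sum_k(-1)^k h^{p,q}\bigl(H^k_c(V,\bC)\bigr)$ records the mixed Hodge structure on compactly supported cohomology. For a smooth projective (more generally compact K\"ahler) variety purity collapses this to $E(V;u,v)=\sum_{p,q}(-1)^{p+q}h^{p,q}(V)\,u^pv^q$, so recovering all Hodge numbers is equivalent to knowing $E$.

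First I would attach to the Gorenstein canonical quotient $M/G'$ its \emph{stringy $E$-function}. Choosing a log resolution $\rho\colon W\to M/G'$ with exceptional prime divisors $D_i$ ($i\in I$) and discrepancies $a_i$, one sets
\[
E_{\mathrm{st}}(M/G';u,v)=\sum_{J\subseteq I}E(D_J^\circ;u,v)\prod_{j\in J}\frac{uv-1}{(uv)^{a_j+1}-1},
\]
where $D_J^\circ$ is the stratum meeting exactly the $D_j$ with $j\in J$. The structural input, proved via the change-of-variables formula in motivic integration, is that this is independent of the resolution. Exploiting crepancy comes next: since $\beta\colon\widetilde M\to M/G'$ is crepant all its discrepancies vanish, and comparing a log resolution factoring through $\widetilde M$ with one of $M/G'$ forces $E_{\mathrm{st}}(\widetilde M;u,v)=E_{\mathrm{st}}(M/G';u,v)$. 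As $\widetilde M$ is smooth and compact K\"ahler, the left side is the ordinary $E$-polynomial, whence $E_{\mathrm{st}}(M/G';u,v)=\sum_{p,q}(-1)^{p+q}h^{p,q}(\widetilde M)\,u^pv^q$.

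Then — and this is where the real content lies — I would compute $E_{\mathrm{st}}(M/G')$ directly from the orbifold data by equivariant integration over $M/G'$. The McKay-type orbifold $E$-function formula asserts
\[
E_{\mathrm{st}}(M/G';u,v)=\sum_{[g]}\;\sum_{i} E\!\left(F^i_g/C(g);u,v\right)(uv)^{\mathrm{age}(g)},
\]
the outer sum over conjugacy classes $[g]$ of $G'$, with $F^i_g$ the components of the $g$-fixed locus, $C(g)$ the centralizer, and $\mathrm{age}(g)$ the fermionic shift; the hypothesis that $G'$ preserves the volume form guarantees integrality of the ages, so no fractional weights occur. \textbf{This identity is the main obstacle}: it requires matching, stratum by stratum, the weighted count of $G'$-orbits of arcs on $M$ with the discrepancy weighting on a resolution of $M/G'$, through the transformation rule for motivic integrals under $M\to M/G'$.

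Finally I would match coefficients. Each piece $F^i_g$ is smooth (a fixed locus of a finite action) and projective, and $E(F^i_g/C(g))=\sum_{p,q}(-1)^{p+q}\dim H^{p,q}(F^i_g)^{C(g)}u^pv^q$ is pure; likewise the invariant part of $H^{p,q}(M)$. Reading off the $(uv)^{\mathrm{age}(g)}$ shift on both indices, the right-hand side of the orbifold formula is precisely $\sum_{p,q}(-1)^{p+q}h^{p,q}_{\mathrm{orb}}(M/G')\,u^pv^q$; in the setting of \eqref{orbifoldscohomology}, where $G'=\Upsilon$ is abelian (singleton classes, $C(g)=\Upsilon$) and every nontrivial $\alpha_X$ has age $1$, this is exactly the decomposition $H^{p,q}(X)^{\Upsilon}\oplus\bigoplus_{\alpha_X}\bigoplus_i H^{p-1,q-1}(F^i_{\alpha_X}/\Upsilon)$. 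Combining the three displayed identities gives an equality of $E$-polynomials, and because purity of all the smooth projective pieces removes the sign ambiguity of the alternating sum, comparing the coefficient of each monomial $u^pv^q$ yields $h^{p,q}(\widetilde M)=h^{p,q}_{\mathrm{orb}}(M/G')$ for every $(p,q)$, as claimed.
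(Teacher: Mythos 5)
This statement is not proved in the paper at all: it is quoted verbatim as an external result, attributed to [BD96] (Batyrev--Dais), so there is no internal argument to compare yours against. What you have written is a faithful reconstruction of the way this theorem is actually established in the literature: stringy $E$-functions defined via log resolutions and shown to be resolution-independent by motivic integration, the observation that crepancy of $\beta$ forces $E_{\mathrm{st}}(M/G')$ to equal the ordinary $E$-polynomial of $\widetilde M$, the identification of $E_{\mathrm{st}}(M/G')$ with the orbifold $E$-function, and finally purity to pass from an equality of alternating sums to an equality of individual Hodge numbers. That skeleton is correct, and your reading of the age shift against the paper's formula \eqref{orbifoldscohomology} (abelian $\Upsilon$, singleton conjugacy classes, all ages equal to $1$) is accurate.

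However, as a proof the proposal is incomplete in exactly the place you flag: the identity $E_{\mathrm{st}}(M/G')=E_{\mathrm{orb}}(M/G')$ is not "an obstacle to be handled by the transformation rule" --- it \emph{is} the substance of the cited theorem (it is the motivic McKay correspondence, proved by Batyrev via $p$-adic integration and by Denef--Loeser via motivic integration on the quotient), and invoking it unproven means you have reduced the statement to a result of the same depth rather than established it. Two smaller points: the age is in general a function of the connected component of $\mathrm{Fix}(g)$, not of $g$ alone, so the weight $(uv)^{\mathrm{age}(g)}$ should be taken componentwise (harmless here, where every age is $1$, but the formula as written is imprecise); and the motivic-integration route only covers the projective/algebraic case, whereas the theorem is stated for compact K\"ahler $M$ --- you note this restriction, and it does suffice for every application in the paper, but it should be stated as a genuine narrowing of the claim rather than an equivalent formulation.
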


\begin{proposition}\label{Hdg of quotient of X}
Let $X\in \cF^A_{\mathfrak{D}_4}$ and $\Upsilon\le\textup{Aut}(X)$. Let $Y\longrightarrow X/\Upsilon$ be the crepant resolution constructed in Theorem \ref{mainth2}, we have:
$$ h^{1,1}(Y)=h^{2,1}(Y)=2+\sum\limits_{id\not=\alpha_X\in \Upsilon}\abs{\dfrac{\textup{Fix}(\alpha_X)}{\Upsilon}}. $$ 
In particular $e(Y)=0$. 
\end{proposition}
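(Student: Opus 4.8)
The plan is to compute both Hodge numbers $h^{1,1}(Y)$ and $h^{2,1}(Y)$ via the orbifold cohomology formula \eqref{orbifoldscohomology}, invoking Theorem \ref{hodgeorbifold} to identify $h^{p,q}(Y)=h^{p,q}_{orb}(X/\Upsilon)$. This is legitimate because, by Theorem \ref{mainth2}, every $\upsilon\in\Upsilon$ preserves the volume form $\omega_X$ and $\beta\colon Y\to X/\Upsilon$ is a crepant resolution, so the hypotheses of Theorem \ref{hodgeorbifold} are met with $M=X$ and $G'=\Upsilon$. Thus the entire computation reduces to reading off the two summands in \eqref{orbifoldscohomology} for the bidegrees $(p,q)=(1,1)$ and $(p,q)=(2,1)$.

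First I would handle the \emph{untwisted} sector $H^{p,q}(X)^\Upsilon$. By Theorem \ref{mainth1} every $\alpha_X\in\Upsilon$ is induced by a \emph{translation} on $A'$, and translations act trivially on the cohomology of an abelian variety (they are homotopic to the identity). Hence $\Upsilon$ acts trivially on $H^{\bullet}(X)=H^{\bullet}(A')^H$, giving $H^{p,q}(X)^\Upsilon=H^{p,q}(X)$. Since $X$ is a Calabi-Yau threefold with $(h^{1,1}(X),h^{2,1}(X))=(2,2)$ by Lemma \ref{manifoldX}, both untwisted contributions equal $2$. This accounts for the leading $2$ in the claimed formula and is the same for both bidegrees.

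Next I would analyze the \emph{twisted} sectors. For each $id_X\neq\alpha_X\in\Upsilon$, the age computation already recorded before \eqref{orbifoldscohomology} gives $age(\alpha_X)=1$, so the shift $H^{p-1,q-1}(F^i_{\alpha_X}/\Upsilon)$ is the correct one. By Proposition \ref{quotientCYthreefold} each fixed component $F^i_{\alpha_X}$ is a smooth \emph{curve}; moreover these are elliptic curves as listed in \eqref{elliptic-curves}. For the bidegree $(1,1)$ the twisted term is $\sum_i H^{0,0}(F^i_{\alpha_X}/\Upsilon)$, and since each quotient curve is connected this contributes $1$ per component, i.e.\ $\abs{\mathrm{Fix}(\alpha_X)/\Upsilon}$ in total. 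For the bidegree $(2,1)$ the term is $\sum_i H^{1,0}(F^i_{\alpha_X}/\Upsilon)$; here I would argue that the quotient of each elliptic curve by the (free or fixed-point) residual $\Upsilon$-action is again a genus-one curve, so $h^{1,0}=1$ per component, again yielding $\abs{\mathrm{Fix}(\alpha_X)/\Upsilon}$. Summing over all nontrivial $\alpha_X$ reproduces the stated sum in both cases, proving $h^{1,1}(Y)=h^{2,1}(Y)$ and hence $e(Y)=2(h^{1,1}-h^{2,1})=0$ by \eqref{Euler of CY}.

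The main obstacle I anticipate is the twisted $(2,1)$ computation: one must be sure that each connected fixed component, after passing to the quotient by the residual group $\Upsilon/\langle\alpha_X\rangle$, has $h^{1,0}=1$, equivalently that it remains a curve of genus one rather than degenerating to a rational curve. Concretely, the residual automorphisms act on each elliptic curve $C^k$ in \eqref{elliptic-curves} by translations (since they lift to translations on $A'$), and a quotient of an elliptic curve by a finite group of translations is again an elliptic curve, so $h^{1,0}=1$ is preserved; a group element acting with fixed points on a given component would instead map that component to a different one, so it does not quotient the curve to lower genus. Verifying this translation-only action on each listed component—and that distinct components are identified in pairs or orbits exactly as counted by $\abs{\mathrm{Fix}(\alpha_X)/\Upsilon}$—is the delicate bookkeeping step, but it follows from the explicit description of $\mathrm{Fix}(\alpha_X)$ in Table \ref{table2} together with the translation structure of $\Upsilon$ from Theorem \ref{mainth1}.
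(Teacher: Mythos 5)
Your proposal is correct and follows essentially the same route as the paper's proof: both identify $h^{p,q}(Y)$ with $h^{p,q}_{orb}(X/\Upsilon)$ via Theorem \ref{hodgeorbifold}, note that $\Upsilon$ acts trivially on $H^{1,1}(X)$ and $H^{2,1}(X)$ because its elements are induced by translations, and then argue that each fixed component and its quotient by the residual $\Upsilon$-action (translation or identification of components) remains an elliptic curve, so each twisted sector contributes $1$ to both $h^{1,1}$ and $h^{2,1}$. The only cosmetic difference is that the paper first disposes of the case where $X/\Upsilon$ is smooth (so $Y\in\cF^A_{\fD_4}$) before applying the orbifold formula, whereas you treat all cases uniformly, which is equally valid.
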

\begin{proof}
If $X/\Upsilon$ is smooth, by \ref{mainth2} part (ii) we have  $Y=X/\Upsilon\in \cF^A_{\mathfrak{D}_4}$ so the result easily follows. Assume that $X/\Upsilon$ is singular and let $Y$ be as in the statement. We apply Theorem \ref{hodgeorbifold}. Since $\Upsilon$ is abelian then $\dfrac{\coprod\limits_{\alpha_X\in \Upsilon}\text{Fix}(\alpha_X)}{\Upsilon}=\coprod\limits_{\alpha_X\in \Upsilon}\dfrac{\text{Fix}(\alpha_X)}{\Upsilon}$. By Table \ref{table2} each $F^i_{\alpha_X}\in \text{Fix}(\alpha_X)$ is an elliptic curve.
If $\Upsilon$ acts on $F^i_{\alpha_X}$, since $\Upsilon$ is induced by a group of translation on $A'$,
it can either fixes $F^i_{\alpha_X}$ or acts by translation on it. If it does not preserve $F^i_{\alpha_X}$, we have identifications in the quotients $X/\Upsilon$. In any cases $F^i_{\alpha_X}/\Upsilon$ is an elliptic curve. 
Since every element of $\Upsilon$ is induced by a translation, we get $h^{1,1}(X)^\Upsilon=h^{1,1}(X)=h^{2,1}(X)=h^{2,1}(X)^\Upsilon$. Therefore, by applying \eqref{orbifoldscohomology} and Theorem \ref{hodgeorbifold} we obtain:
\begin{align*}
h^{1,1}(Y)=h_{orb}^{1,1}(X/\Upsilon)&=h^{1,1}(X)^\Upsilon+\sum\limits_{id\not=\alpha_X\in \Upsilon}\sum\limits_{i} h^{0,0}(F^i_{\alpha_X}/\Upsilon) \\
&=h^{2,1}(X)^\Upsilon+\sum\limits_{id\not=\alpha_X\in \Upsilon}\sum\limits_{i} h^{1,0}(F^i_{\alpha_X}/\Upsilon)=h_{orb}^{2,1}(X/\Upsilon)=h^{2,1}(Y).
\end{align*}
Consequently $h^{1,1}(Y)=h^{2,1}(Y)$ and $e(Y)=0$. Since $h^{0,0}(F^i_{\alpha_X}/\Upsilon)=1$  and $h^{1,1}(X)^{\Upsilon}=2$ we have the result.
\end{proof}

To compute $\abs{\dfrac{\text{Fix}(\alpha_X)}{\Upsilon}}$ one can use the description of $\text{Fix}(\alpha_X)$ in Table \ref{table2} and then need to study the the action of $\Upsilon$ on it.
 
\subsection{Fundamental group of a desingularization of $X/\Upsilon$}\label{sec: fundgroup}
Through well-known techniques, we recall how to compute the fundamental group of an orbifold space.
For a complete discussion we refer to \cite[Chapter 11]{Br}.

\begin{definition}
Let $\Gamma$ be a group acting on a topological space. The action is called \emph{discontinuous} if for every $y\in Y$, the stabilizer $\Gamma_y$ is finite and there exists an open neighborhood $V_y$ such that $\gamma V_y \cap V_y=\emptyset$ for every $\gamma \not\in \Gamma_y$.
\end{definition}

\begin{theorem}[\cite{Br} Propositions 11.2.3 and 11.5.2 (c)]\label{fund. group}
Let $V$ be a connected topological space and $\Gamma$ be a group acting on $V$. If $\Gamma$ defines a discontinuous action on $V$ and $Y$ is simply connected, the fundamental group of $V/\Gamma$ is: 
$$\pi_1\biggl(\dfrac{V}{\Gamma}\biggr)\simeq\dfrac{\Gamma}{F_{\Gamma}}$$ where $F_{\Gamma}=\{ \gamma \in \Gamma \mid \exists \text{ z }\in V \text{ s.t. } \gamma\in \Gamma_z \}\unlhd \Gamma$.
\end{theorem}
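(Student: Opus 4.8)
The final statement to prove is Theorem \ref{fund. group}, which is a standard result in the theory of discontinuous group actions on simply connected spaces, attributed to \cite{Br}. Since the theorem is quoted from the reference, my plan is to reconstruct the proof that the fundamental group of the quotient $V/\Gamma$ is the quotient $\Gamma/F_\Gamma$, where $F_\Gamma$ is the normal subgroup generated by elements that fix some point of $V$.

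First I would verify that $F_\Gamma$ is indeed a normal subgroup of $\Gamma$: if $\gamma$ fixes $z \in V$, then for any $\delta \in \Gamma$ the conjugate $\delta \gamma \delta^{-1}$ fixes $\delta z$, so the set of elements having a fixed point is closed under conjugation, and the subgroup it generates is normal. Next, the core of the argument is the general theory of covering spaces adapted to the orbifold setting. The plan is to consider the quotient map $p \colon V \to V/\Gamma$. Where $\Gamma$ acts freely (no fixed points), $p$ restricts to a genuine covering map, and since $V$ is simply connected this is the universal cover of its image, giving $\pi_1 \simeq \Gamma$. The presence of fixed points is precisely what forces us to quotient out by $F_\Gamma$: a loop in $V/\Gamma$ that lifts to a path in $V$ connecting $z$ to $\gamma z$ becomes contractible in $V/\Gamma$ exactly when $\gamma$ can be "killed" by passing through a fixed point, i.e. when $\gamma \in F_\Gamma$.

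The key steps, in order, would be: (i) establish that the discontinuity hypothesis guarantees the quotient $V/\Gamma$ is a reasonable (Hausdorff, locally well-behaved) space and that each point has a neighborhood of the form $V_y/\Gamma_y$; (ii) use the simple connectivity of $V$ together with the path-lifting properties of the quotient map to set up a surjection $\Gamma \twoheadrightarrow \pi_1(V/\Gamma)$, constructed by sending $\gamma$ to the homotopy class of the image under $p$ of any path from a fixed basepoint $v_0$ to $\gamma v_0$ (this is well-defined precisely because $V$ is simply connected, so the choice of path does not matter); (iii) identify the kernel of this surjection as exactly $F_\Gamma$. The containment $F_\Gamma \subseteq \ker$ follows because an element fixing a point $z$ yields a path from $v_0$ to $z$ and back via the fixed point, whose image is null-homotopic. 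The reverse containment $\ker \subseteq F_\Gamma$ is the substantive direction and uses the local structure near fixed points to decompose any null-homotopic loop into contributions coming from the stabilizers.

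I expect the main obstacle to be the reverse inclusion $\ker(\Gamma \twoheadrightarrow \pi_1(V/\Gamma)) \subseteq F_\Gamma$. Showing that every element inducing a contractible loop must be a product of fixed-point elements requires a Van Kampen-type or simplicial-decomposition argument: one covers the image of the null-homotopy by coordinate charts of the form $V_y/\Gamma_y$ and tracks how the loop passes through the ramification locus, accumulating stabilizer elements that multiply to the given $\gamma$ modulo the relations. Since this is precisely the content of \cite[Propositions 11.2.3 and 11.5.2(c)]{Br}, in practice I would cite these propositions directly rather than reproduce the full covering-space machinery, as the theorem is stated as a recollection of known results rather than an original contribution.
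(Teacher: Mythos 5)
The paper offers no proof of this statement at all—it is quoted directly from \cite{Br}—so your bottom line of citing Propositions 11.2.3 and 11.5.2(c) rather than reproducing the covering-space machinery is exactly the paper's approach, and your sketch (normality of $F_\Gamma$ via conjugation, the surjection $\gamma \mapsto \bigl[p(\text{path from } v_0 \text{ to } \gamma v_0)\bigr]$ well-defined by simple connectivity, the easy inclusion $F_\Gamma \subseteq \ker$, and deferral of the hard reverse inclusion) is a correct outline of the standard argument behind it. One detail in your favor: as written in the statement, $F_\Gamma$ is merely the \emph{set} of elements with a fixed point, which is closed under conjugation and inversion but not in general under products, so your reading of $F_\Gamma$ as the normal subgroup generated by the point stabilizers is the correct interpretation of the cited result.
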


\begin{lemma}\label{discoaction}
Every $\Gamma\le \textup{Iso}(\bR^n)=O(n)\ltimes \bR^n$ crystallographic group defines a discontinuous action on $\bR^n$.
\end{lemma}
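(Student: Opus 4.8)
The plan is to verify the two defining properties of a discontinuous action directly from the structure of a crystallographic group as encoded in the Bieberbach exact sequence \eqref{crystallographic sequence}. Recall that $\Gamma$ fits into $0\to \Lambda' \to \Gamma \to G' \to 0$ with $\Lambda'=\Gamma\cap \bR^n$ a lattice (maximal abelian normal subgroup of finite index) and $G'$ finite. I must show that for every $y\in \bR^n$ the stabilizer $\Gamma_y$ is finite, and that there is a neighborhood $V_y$ with $\gamma V_y\cap V_y=\emptyset$ for all $\gamma\notin\Gamma_y$.

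First I would prove finiteness of stabilizers. An element $\gamma=(M,m)\in\Gamma$ acts by $y\mapsto My+m$, so $\gamma\in\Gamma_y$ forces $My+m=y$. If two elements $(M,m),(M,m')$ both fix $y$ with the same linear part $M$, then $m=m'$, so the restriction of the projection $l\colon\Gamma\to G'$ to $\Gamma_y$ is injective. Since $G'$ is finite, $\Gamma_y$ is finite. (Equivalently: $\Gamma_y\cap\Lambda'=\{0\}$ because a nontrivial translation has no fixed point, so $\Gamma_y$ injects into the finite quotient $G'$.)

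Next I would establish the separation property. The key input is that $\Gamma$ is \emph{discrete} in $\textup{Iso}(\bR^n)$ and acts \emph{properly discontinuously}: the standard fact for discrete subgroups of $\textup{Iso}(\bR^n)$ is that for any compact $K\subset\bR^n$ the set $\{\gamma\in\Gamma \mid \gamma K\cap K\neq\emptyset\}$ is finite. I would fix $y$ and apply this with $K=\overline{B}(y,\varepsilon)$ a closed ball; properness gives a finite set $S$ of group elements $\gamma$ with $\gamma K\cap K\neq\emptyset$. Each $\gamma\in S\setminus\Gamma_y$ does not fix $y$, so $\gamma y\neq y$; by continuity of the finitely many isometries in $S\setminus\Gamma_y$, I can shrink $\varepsilon$ to $\varepsilon'$ so that $\gamma \overline{B}(y,\varepsilon')\cap \overline{B}(y,\varepsilon')=\emptyset$ for all such $\gamma$. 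Setting $V_y=B(y,\varepsilon')$ then kills every $\gamma\notin\Gamma_y$: those in $S$ are handled by the shrinking, and those outside $S$ already satisfy $\gamma K\cap K=\emptyset$ for the larger ball.

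The main obstacle is supplying the properness input cleanly, since the definition of crystallographic group only gives discreteness and cocompactness. I would derive proper discontinuity from discreteness as follows: $\textup{Iso}(\bR^n)=O(n)\ltimes\bR^n$ acts properly on $\bR^n$ (the orbit map has the property that preimages of compact sets under $(g,y)\mapsto(gy,y)$ are compact, because $O(n)$ is compact and translations are controlled by $\|gy-y\|$), and a discrete subgroup of a group acting properly again acts properly, which for a discrete group is exactly proper discontinuity. With this lemma in hand, the two verifications above are routine. I would therefore state the properness of the ambient action as the pivotal step and reduce everything else to the Bieberbach structure and elementary continuity arguments.
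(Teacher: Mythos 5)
Your proposal is correct, and on the stabilizer-finiteness half it coincides with the paper's argument: both observe that the translation part of a stabilizing element $(M,m)$ is uniquely determined by its linear part $M$ (equivalently, $\Gamma_y\cap\Lambda'=\{0\}$, since a nontrivial translation fixes no point), so that $\Gamma_y$ injects via the Bieberbach sequence into the finite group $G'$. Where you genuinely diverge is on the separation property. The paper dispatches it in one line: ``since $\bR^n$ is a Hausdorff topological space, for every $\ul{z}$ there exists $V_{\ul{z}}$ such that $\gamma V_{\ul{z}}\cap V_{\ul{z}}=\emptyset$ for every $\gamma\notin\Gamma_{\ul{z}}$.'' As stated, that justification is too weak: Hausdorffness separates $y$ from each of the finitely many points you care to name, but by itself gives no control over the infinitely many group elements simultaneously (a dense orbit of a non-discrete translation group on $\bR$ shows Hausdorffness alone cannot suffice; discreteness of $\Gamma$ must enter). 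Your argument supplies exactly this missing input: you derive proper discontinuity of the discrete subgroup $\Gamma$ from properness of the ambient $O(n)\ltimes\bR^n$-action (using compactness of $O(n)$), reduce to the finite set $S=\{\gamma\in\Gamma \mid \gamma K\cap K\neq\emptyset\}$ for a closed ball $K$ around $y$, and shrink the ball to kill the finitely many elements of $S\setminus\Gamma_y$. So your route is longer but closes a real gap in the paper's one-line justification; what the paper's version buys is brevity, at the cost of silently leaning on the standard fact that crystallographic groups act properly discontinuously --- the very fact you prove.
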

\begin{proof}
Since $\bR^n$ is an Hausdorff topological space, for every $\ul{z}\in \bR^n$ there exists $V_{\ul{z}}$ such that $\gamma V_{\ul{z}} \cap V_{\ul{z}}=\emptyset$ for every $\gamma\in \Gamma$ which is not in the stabilizer $\Gamma_{\ul{z}}$ of $\ul{z}$. 
We prove that the stabilizer of $\ul{z}$ is finite. 
According to Remark \ref{exact-sequence}, $\Gamma$ fits in:
\begin{equation}
     0\longrightarrow \Lambda' \xrightarrow{\quad i\quad} \Gamma \xrightarrow{\quad l\quad} G' \longrightarrow 0
\end{equation}
where $\Lambda=\Gamma \cap \bR^n$ and $G'$ is finite.
We have:
$$ \Gamma_{\ul{z}}=\{ (M,\lambda')\in\Gamma \mid l(M,\lambda)=M\in G', i^{-1}(M,\lambda')=\lambda'\in \Lambda' \text{ such that } \lambda'(\ul{z})=M^{-1}(\ul{z})\}. $$ 
We see that $\lambda'$ is uniquely determined by $M$ and $M$ varies in the finite group $G'$, we deduce that $\Gamma_{\ul{z}}$ is finite.
\end{proof}

In our situation: we have $X/\Upsilon$ with $X\in \cF^A_{\fD_4}$ and $\Upsilon\le \text{Aut}(X)$. According to Theorem \ref{aut GHV}, $X/\Upsilon$ can be obtained as finite quotient of $A$ and so there exist $\Gamma \le \text{Iso}(\bC^3)$ such that $X/\Upsilon=\bC^3/\Gamma$. The group $\Gamma$ is a crystallographic group since it can be viewed as finite extension of $\pi_1(A)$ by the group $\langle \Upsilon_{A}, G\rangle $. 
Thus $\Gamma$ defines a discontinuous action on $\bC^3$ and we can apply Lemma \ref{fund. group} to compute $\pi_1(X/\Upsilon)$.

\begin{theorem}[Theorem 7.8 \cite{Kollar}]
Let $Y_1$ be a normal analytic space and let us consider a resolution of singularities $f: Y_2\longrightarrow Y_1$. If $Y_1$ has quotient singularities then $\pi_1(Y_2)$ and $\pi_1(Y_1)$ are isomorphic.
\end{theorem}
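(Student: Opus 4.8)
The plan is to compare $\pi_1(Y_2)$ and $\pi_1(Y_1)$ through the common open locus on which $f$ is an isomorphism, and to reduce the whole statement to a local simple-connectivity property of resolutions of quotient singularities. Write $\Sigma=\mathrm{Sing}(Y_1)$ and $E=f^{-1}(\Sigma)$; since $f$ restricts to a biholomorphism $Y_2\setminus E\xrightarrow{\ \sim\ }Y_1\setminus\Sigma=:Y_1^{\mathrm{sm}}$, I would identify these spaces and work with the single group $\pi_1(Y_1^{\mathrm{sm}})$. First I would record two surjections out of it: the inclusion $Y_1^{\mathrm{sm}}\hookrightarrow Y_1$ induces a surjection $i_\ast\colon\pi_1(Y_1^{\mathrm{sm}})\twoheadrightarrow\pi_1(Y_1)$ (every loop can be homotoped off the complex-codimension-$\ge2$ singular set $\Sigma$), while the inclusion $Y_2\setminus E\hookrightarrow Y_2$ induces a surjection $j_\ast\colon\pi_1(Y_1^{\mathrm{sm}})\twoheadrightarrow\pi_1(Y_2)$ because $E$ is removed from the \emph{smooth} manifold $Y_2$. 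Since $i_\ast=f_\ast\circ j_\ast$, the map $f_\ast$ is automatically surjective and one has $\ker(j_\ast)\subseteq\ker(i_\ast)$; thus $f_\ast$ is an isomorphism if and only if $\ker(i_\ast)\subseteq\ker(j_\ast)$.

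Next I would identify both kernels by van Kampen and localize at the singularities. Replacing $Y_2$ by a log resolution (which does not change $\pi_1(Y_2)$) I may assume $E$ is a simple normal crossing divisor, so that $\ker(j_\ast)$ is the normal closure of the meridians of the components $E_i$. Covering $Y_1$ by $Y_1^{\mathrm{sm}}$ together with small contractible neighbourhoods $U_p=V_p/G_p$ of the quotient singularities, van Kampen shows that $\ker(i_\ast)$ is the normal closure of the images of the local kernels $\ker\big(\pi_1(U_p^{\mathrm{sm}})\to\pi_1(U_p)\big)$. Hence the statement reduces to the purely local assertion: for a quotient singularity $U=V/G$ with resolution $g\colon\widetilde U\to U$, the induced map $g_\ast\colon\pi_1(\widetilde U)\to\pi_1(U)$ is an isomorphism. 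By Chevalley--Shephard--Todd I may assume $G$ has no pseudo-reflections, so $\mathrm{Fix}(G)$ has codimension $\ge2$, $V\setminus\mathrm{Fix}(G)$ is simply connected, and therefore $\pi_1(U^{\mathrm{sm}})=\pi_1\big((V\setminus\mathrm{Fix}(G))/G\big)=G$. Since $U=V/G$ is a contractible cone, $\pi_1(U)=1$ (equivalently, Armstrong's theorem gives $\pi_1(V/G)=G/\langle\!\langle\,g:\mathrm{Fix}(g)\neq\emptyset\,\rangle\!\rangle=1$), so what remains is to prove $\pi_1(\widetilde U)=1$; as $\widetilde U$ deformation retracts onto the exceptional fibre $g^{-1}(0)$, this is exactly the simple-connectivity of that fibre.

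The hard part will be precisely this last local statement $\pi_1(\widetilde U)=1$, which carries all the arithmetic of the singularity. I would prove it with the finite-cover criterion: a connected finite étale cover $\widetilde U{}'\to\widetilde U$ restricts to a connected cover of $U^{\mathrm{sm}}$, hence to a subgroup of $\pi_1(U^{\mathrm{sm}})=G$; using that $V\to V/G$ is the orbifold universal cover and that $g$ collapses $E$ onto the codimension-$\ge2$ locus $\mathrm{Fix}(G)/G$, one checks that the only such cover extending across $E$ is the trivial one, so $\pi_1(\widetilde U)$ admits no nontrivial finite quotient; combined with the a~priori finiteness of the fundamental group of a resolution of a quotient singularity this forces $\pi_1(\widetilde U)=1$. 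Granting this, $\ker(i_\ast)$ and $\ker(j_\ast)$ coincide and $f_\ast$ is an isomorphism. (In the paper the singularities of $X/\Upsilon$ are transverse $A_1$ along the fixed elliptic curves; a small contractible neighbourhood of such a point is a product $\Delta\times(\bC^2/{\pm1})$ whose local resolution has exceptional fibre $\bP^1$, so here $\pi_1(\widetilde U)=1$ is immediate and the general argument can be bypassed.) Since this is the content of \cite[Theorem 7.8]{Kollar}, I would in practice simply invoke it after the reduction above.
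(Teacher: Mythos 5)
First, a point of comparison: the paper does not prove this statement at all. It is imported verbatim as \cite[Theorem 7.8]{Kollar} and used as a black box to deduce Corollary \ref{fund-quotient}, so there is no internal proof to measure your argument against; the only question is whether your blind attempt is complete on its own. Your global-to-local reduction is correct and is indeed the standard skeleton of the known proof: the two surjections $i_\ast\colon\pi_1(Y_1^{\mathrm{sm}})\twoheadrightarrow\pi_1(Y_1)$ and $j_\ast\colon\pi_1(Y_2\setminus E)\twoheadrightarrow\pi_1(Y_2)$, the factorization $i_\ast=f_\ast\circ j_\ast$, and the reduction (via van Kampen, which for a positive-dimensional singular locus needs slightly more care than you indicate, but is fixable) to the purely local claim that a resolution $\widetilde U$ of a germ $V/G$ is simply connected. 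Your finiteness observation is also sound, since $\pi_1(\widetilde U)$ is a quotient of $\pi_1(U^{\mathrm{sm}})\simeq G$.

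The genuine gap is the local statement itself, which is where the entire content of Kollár's theorem lives. In your ``finite-cover criterion'' paragraph, the decisive sentence --- ``one checks that the only such cover extending across $E$ is the trivial one'' --- is an assertion, not an argument, and it is essentially equivalent to what is being proved. Concretely: a connected degree-$d$ étale cover $\widetilde U{}'\to\widetilde U$ restricts over $U^{\mathrm{sm}}$ to $\bigl(V\setminus\mathrm{Fix}(G)\bigr)/H$ for some index-$d$ subgroup $H\le G$, and the Stein factorization of $\widetilde U{}'\to U$ exhibits $\widetilde U{}'$ as a resolution of $V/H$ that is étale over $\widetilde U$. Nothing elementary rules out this configuration when $H\subsetneq G$; excluding it requires genuine input --- for instance the rationality of quotient singularities combined with a holomorphic Lefschetz/fixed-point argument on the compact exceptional fibre (a nontrivial finite group acting freely there is incompatible with the vanishing of higher cohomology of $\mathcal{O}$ on the fibre), or Kollár's own argument --- none of which appears in your proposal. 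Your closing remarks are the honest way out: invoking \cite[Theorem 7.8]{Kollar} after the reduction is exactly what the paper does. One caveat on your parenthetical shortcut: it covers the transverse $A_1$ points, which suffices when $\Upsilon$ is cyclic, but for larger $\Upsilon$ the local group at special points can be $(\bZ/2\bZ)^2$, so even in the paper's application one must check simple connectivity of the corresponding local exceptional fibre (still easy, but not literally the $\Delta\times(\bC^2/\pm 1)$ model).
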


\noindent Consequently we obtain:

\begin{corollary}\label{fund-quotient}
Let $X\in \cF^A_{\mathfrak{D}_4}$ and $Y$ as in Theorem \ref{mainth2}. Then: $$\pi_1(Y)\simeq \pi_1(X/\Upsilon )\simeq\dfrac{\Gamma}{F_{\Gamma}}$$ where $Y=\bC^3/\Gamma$ as explained before.
\end{corollary}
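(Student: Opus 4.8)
The goal is to prove Corollary \ref{fund-quotient}, which asserts that for $X\in\cF^A_{\mathfrak{D}_4}$ and the crepant resolution $Y\to X/\Upsilon$ of Theorem \ref{mainth2}, one has $\pi_1(Y)\simeq\pi_1(X/\Upsilon)\simeq\Gamma/F_\Gamma$. The plan is to assemble this from the three ingredients already laid out in the excerpt, so that the proof is essentially a matter of verifying that each hypothesis is met and chaining the isomorphisms together. The two halves of the statement will be handled separately: first the identification $\pi_1(Y)\simeq\pi_1(X/\Upsilon)$, and then the computation $\pi_1(X/\Upsilon)\simeq\Gamma/F_\Gamma$.

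First I would establish the left-hand isomorphism using the immediately preceding Theorem 7.8 of \cite{Kollar}. That result requires $X/\Upsilon$ to be a normal analytic space with quotient singularities, and $Y\to X/\Upsilon$ to be a resolution of singularities. By Theorem \ref{mainth2}, $\beta\colon Y\to X/\Upsilon$ is exactly a (crepant) resolution, and the singularities of $X/\Upsilon$ are quotient singularities since $X/\Upsilon$ is realized as the quotient of the smooth Calabi--Yau threefold $X$ (equivalently of the abelian threefold $A$) by a finite group action; hence $\pi_1(Y)\simeq\pi_1(X/\Upsilon)$ follows directly. I would state this invocation explicitly, checking only that normality and the quotient-singularity condition hold, both of which are immediate from the construction as a global finite quotient.

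Next I would compute $\pi_1(X/\Upsilon)$ via Theorem \ref{fund. group} (the Armstrong-type result from \cite{Br}). The paragraph just before the corollary already does the essential bookkeeping: by Corollary \ref{aut GHV} the space $X/\Upsilon$ is a finite quotient of $A$, so there is a crystallographic group $\Gamma\le\textup{Iso}(\bC^3)$ with $X/\Upsilon=\bC^3/\Gamma$, where $\Gamma$ is the finite extension of $\pi_1(A)$ by $\langle\Upsilon_A,G\rangle$. To apply Theorem \ref{fund. group} with $V=\bC^3$, I must verify its hypotheses: $V=\bC^3$ is connected and simply connected, and the $\Gamma$-action is discontinuous. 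The discontinuity is precisely the content of Lemma \ref{discoaction}, which shows every crystallographic subgroup of $\textup{Iso}(\bR^n)$ (hence of $\textup{Iso}(\bC^3)$) acts discontinuously. Theorem \ref{fund. group} then yields $\pi_1(X/\Upsilon)=\pi_1(\bC^3/\Gamma)\simeq\Gamma/F_\Gamma$, with $F_\Gamma$ the normal subgroup generated by the elements of $\Gamma$ having a fixed point, as defined there. Combining with the previous step gives the full chain of isomorphisms in the statement.

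The proof is thus largely a matter of correctly citing and composing the three prior results, and I do not anticipate a serious obstacle in the formal argument. The one point demanding genuine care is confirming that $\Gamma$ genuinely satisfies the definition of a crystallographic group in this nonstandard situation — recall the footnote in Section \ref{sec: neronseveri} warning that $\Lambda=\pi_1(A')$ need \emph{not} be the maximal abelian normal subgroup of finite index here. The cleanest route is to argue that $\Gamma$ is discrete (as a finite extension of the lattice $\pi_1(A)$) and cocompact (since $\bC^3/\Gamma=X/\Upsilon$ is compact), which are exactly the two defining conditions of Definition \ref{def: GHV}; discreteness plus cocompactness is all that Lemma \ref{discoaction} actually consumes, so I would emphasize that the discontinuity conclusion does not rely on $\pi_1(A)$ being the maximal abelian normal subgroup. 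With that subtlety addressed, the remainder is routine.
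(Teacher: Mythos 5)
Your proposal is correct and follows exactly the paper's route: the paper states this corollary as an immediate consequence (``Consequently we obtain'') of Koll\'ar's Theorem 7.8 applied to the resolution $\beta\colon Y\to X/\Upsilon$ of the quotient-singular space $X/\Upsilon$, together with Theorem \ref{fund. group} and Lemma \ref{discoaction} applied to the crystallographic group $\Gamma$ realizing $X/\Upsilon=\bC^3/\Gamma$ as a finite extension of $\pi_1(A)$ by $\langle\Upsilon_A,G\rangle$. Your extra care in verifying that $\Gamma$ is crystallographic via discreteness and cocompactness (rather than via any maximality property of the lattice) is a sound and welcome clarification, but it does not change the argument.
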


\begin{remark}\label{computation-fund-group}  
We briefly explain how to compute $\pi_1(X/\Upsilon)$. Assume $\Upsilon=\langle\alpha_X\rangle$ and denote by $\alpha_{A'}$ a lift of $\alpha_X$ on $A'$.
We have $X/\Upsilon$ is also the quotient $\bC^3/\Gamma_\alpha$ with $\Gamma_\alpha\le \text{Iso}(\bC^3)$ where $\Gamma_\alpha$ is the finite extension of $\Lambda=\pi_1(A')$ by the group $\langle H,\alpha_{A'}\rangle$. We denote by 
$\ol{H}$ and $\ol{\alpha_{A'}}$ lifts of $H$ and $\alpha_{A'}$ to $\bC^3$, respectively. 
To compute $F_{\Gamma_{\alpha_{A'}}}$, one needs to study $\ol{\alpha_{A'}}(\ul{z})=(\lambda\circ \ol{h})(\ul{z})$ for every $\lambda\in \Lambda$, $\ol{h}\in \ol{H}$ and $\ul{z}\in \bC^3$: to do this one can use
 Remark \ref{remark: fixed locus alphaX}. This remark can be used more generally to compute $F_{\Gamma_\Upsilon}$ for every $\Upsilon \le \text{Aut}(X)$ and one can easily observe that
 as $\Upsilon$ grows then $\pi_1(Y)$ tends to the identity group, \emph{i.e.} $Y$ is simply-connected.

\end{remark}

\noindent We summarize all the computations in the tables in Appendix \ref{appendix}. In particular, we deduce the following result. 

\begin{corollary}
Let $X\in \cF^A_{\fD_4}$ and $Y$ be the Calabi-Yau $3$-fold obtained as desingularization of $X/\Upsilon$ as varying $\Upsilon \in \text{Aut}(X)$.
There are $19$ different pairs $(h^{1,1}(Y), \pi_1(Y))$ and so there are at least $19$ non homeomorphic Calabi-Yau $3$-folds.    
\end{corollary}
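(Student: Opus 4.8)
The plan is to leverage the two computable topological invariants $h^{1,1}(Y)$ and $\pi_1(Y)$ that the preceding machinery provides, and to show that among all choices of $\Upsilon \le \textup{Aut}(X) \simeq (\bZ/2\bZ)^4$ there are exactly $19$ distinct pairs $(h^{1,1}(Y), \pi_1(Y))$. Since homeomorphic manifolds have isomorphic fundamental groups and equal Betti numbers, distinct pairs force non-homeomorphic threefolds, so exhibiting $19$ distinct pairs yields the lower bound. First I would reduce the problem to a finite enumeration: by Theorem \ref{mainth1} every subgroup $\Upsilon$ is an elementary abelian $2$-group of rank $1 \le m \le 4$, and up to the action of $\textup{Aut}(X)$ (or more precisely up to isomorphism of the resulting quotients) there are only finitely many such subgroups. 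For each conjugacy/orbit class of $\Upsilon$ I would compute the two invariants by the two formulas already established.

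The value $h^{1,1}(Y)$ is handed to us by Proposition \ref{Hdg of quotient of X}: it equals $2 + \sum_{id \neq \alpha_X \in \Upsilon} \left\lvert \textup{Fix}(\alpha_X)/\Upsilon \right\rvert$. So for each $\Upsilon$ I would read off the fixed loci of its nontrivial elements from Table \ref{table2}, then quotient those finite collections of elliptic curves by the residual translation action of $\Upsilon$ (which either fixes each curve or identifies it with another, as explained in the proof of Proposition \ref{Hdg of quotient of X}), and count the orbits. The fundamental group $\pi_1(Y)$ is supplied by Corollary \ref{fund-quotient}, namely $\pi_1(Y) \simeq \Gamma/F_\Gamma$, where $\Gamma$ is the crystallographic group with $\bC^3/\Gamma = X/\Upsilon$ and $F_\Gamma$ is the normal subgroup generated by elements having fixed points. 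Following Remark \ref{computation-fund-group}, I would compute $F_\Gamma$ by determining which lifts $\ol{h} \circ \lambda$ act with a fixed point, using the explicit fixed-locus description; as $\Upsilon$ enlarges, more elements acquire fixed points, $F_\Gamma$ grows, and $\pi_1(Y)$ shrinks toward the trivial group.

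I would then assemble all these computations into the tables referenced in the statement (Table \ref{table3} through Table \ref{table14}), organized by the rank $m = \dim_{\bF_2} \Upsilon$, and simply tabulate the resulting pairs $(h^{1,1}(Y), \pi_1(Y))$. The final step is to inspect the list and verify that exactly $19$ of these pairs are pairwise distinct. The main obstacle I anticipate is \emph{not} any single formula but the sheer bookkeeping: one must correctly enumerate the orbits of subgroups $\Upsilon$ under the symmetries of $X$ to avoid both double-counting and omission, and for each one carry out the fixed-point and $F_\Gamma$ computations consistently. The subtlest points are (i) correctly accounting for how the translation action of $\Upsilon$ identifies or fixes the elliptic curves in Table \ref{table2} when passing to $\textup{Fix}(\alpha_X)/\Upsilon$, and (ii) identifying the isomorphism type of each finite group $\Gamma/F_\Gamma$ rather than merely its order, since two groups of the same order but different structure still give non-homeomorphic $Y$. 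Once the tables are correct, reading off $19$ distinct pairs and invoking the topological invariance of $\pi_1$ and $h^{1,1}$ completes the argument.
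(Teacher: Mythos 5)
Your proposal is correct and follows essentially the same route as the paper: the paper likewise computes $h^{1,1}(Y)$ from Proposition \ref{Hdg of quotient of X} together with the fixed-locus data of Table \ref{table2}, computes $\pi_1(Y)\simeq \Gamma/F_\Gamma$ via Corollary \ref{fund-quotient} and Remark \ref{computation-fund-group}, tabulates the results subgroup by subgroup in Appendix \ref{appendix} (Tables \ref{table3}--\ref{table14}), and then reads off the $19$ distinct pairs, concluding by topological invariance of $\pi_1$ and of $b_2=h^{1,1}$. The only difference is that you describe the enumeration and bookkeeping rather than execute it, which is precisely the content of the paper's tables.
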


\noindent From the description of $\pi_1(Y)$ we deduce the topology of its universal cover. 

\begin{corollary}\label{fundgroup}
Let $X$ as in Definition \ref{manifoldX} and $Y$ be a desingularization of $X/\Upsilon$ with $\Upsilon$ in $\text{Aut}(X)$ and $\pi_1(Y)$ is described in the tables in Appendix \ref{appendix}. The following cases appear.
\begin{romanenumerate}
\item If $\pi_1(Y)$ is infinite and it is a rank-$6$ lattice, $Y$ is a Calabi-Yau $3$-fold of type $A$ which belongs to $\cF^A_{\fD_4}$.
\item If $\pi_1(Y)$ is infinite and it is a finite extension of a rank-$2$ lattice, the universal cover of $Y$ is isomorphic to $\bC\times S$ where $S$ is a K$3$ surface.
\item If $\pi_1(Y)$ is finite and not trivial, its universal cover is a simply-connected Calabi-Yau $3$-fold.
\end{romanenumerate}
\end{corollary}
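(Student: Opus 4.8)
The statement to establish is Corollary~\ref{fundgroup}, which identifies the universal cover of a desingularization $Y$ of $X/\Upsilon$ in three regimes according to the structure of $\pi_1(Y)$ computed in the appendix tables. The plan is to feed the group-theoretic description of $\pi_1(Y)$ (from Corollary~\ref{fund-quotient}, $\pi_1(Y)\simeq\Gamma/F_\Gamma$) into the structure theory of crystallographic groups, and in each of the three cases deduce the geometry of the universal cover $\widetilde{Y}$. First I would record that, since $Y$ is a desingularization of a quotient of the abelian threefold $A$ by a finite group, $\widetilde{Y}$ is obtained by resolving the universal cover of $X/\Upsilon$; by Bieberbach (Remark~\ref{exact-sequence}) the fundamental group $\pi_1(Y)$ fits in an exact sequence whose maximal abelian normal subgroup of finite index is a lattice $\Lambda'$ of some rank $k\le 6$, and $\widetilde{Y}$ will be a product $\bC^{k/2}\times \widetilde{S}$ where $\widetilde{S}$ is the simply-connected factor carrying the remaining geometry.

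\emph{Case (i).} When $\pi_1(Y)$ is infinite of rank $6$ as a lattice, the quotient $X/\Upsilon$ is already smooth (the resolution $\beta$ is an isomorphism, which by Proposition~\ref{Hdg of quotient of X} occurs precisely when $\Upsilon$ acts freely, cf.\ Theorem~\ref{mainth2}), so $Y=X/\Upsilon$ is itself a GHM with a full-rank lattice as maximal abelian normal subgroup; Lemma~\ref{fund_group} and Theorem~\ref{family d4} then force $Y\in\cF^A_{\fD_4}$. \emph{Case (iii)} is the easiest: if $\pi_1(Y)$ is finite and nontrivial, then $F_\Gamma$ has finite index, the maximal lattice $\Lambda'$ has rank $0$, so the universal cover is compact and simply connected with trivial canonical bundle (since $Y$ is Calabi-Yau and $\widetilde{Y}\to Y$ is étale), hence a simply-connected Calabi-Yau threefold.

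\emph{Case (ii)} is where the real content lies, and I expect this to be the main obstacle. Here $\pi_1(Y)$ is infinite but its maximal abelian normal subgroup is a rank-$2$ lattice, so by Bieberbach's first theorem the universal cover splits off a one-dimensional complex factor $\bC$ (the directions preserved by the lattice) times a simply-connected complex surface $S$. The work is to identify $S$ as a K3 surface rather than, say, an abelian or Enriques-type surface. The natural route is the Beauville--Bogomolov decomposition: $\widetilde{Y}$ is a simply-connected-in-the-surface-factor Calabi-Yau whose holonomy, after splitting off the flat $\bC$, must land $S$ in the class of manifolds with trivial canonical bundle and $h^{1,0}=h^{2,0}\ne 0$ forced by the holomorphic volume form $\omega_Y=dz\wedge\omega_S$; the vanishing $h^{1,0}(Y)=0$ of the Calabi-Yau $Y$ rules out $S$ being a complex torus (which would give $h^{1,0}(S)=2$), and triviality of $\cK_S$ together with $h^{1,0}(S)=0$ and $h^{2,0}(S)=1$ characterizes a K3 surface. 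I would make this rigorous by passing to the finite cover of $X$ on which $\Upsilon$ together with the relevant part of $H$ acts, exhibiting $S$ concretely as the resolution of a quotient of the product of two of the three elliptic factors $E\times E$ (the factor $E'$ supplying the $\bC$-direction, consistent with the shape of the fixing translations in Theorem~\ref{mainth1} and the curves in~\eqref{elliptic-curves}), and then checking the Hodge numbers of that Kummer-type surface directly to confirm it is K3.
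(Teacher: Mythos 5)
Your proposal is essentially correct, and for the heart of the statement --- cases (ii) and (iii) --- it follows the same route as the paper: the paper's entire proof is one application of the Beauville--Bogomolov decomposition \cite[Theorem 14.15]{GHJ} to the universal cover, $\widetilde{Y}=\bC^n\times\prod_i W_i\times\prod_j Z_j$, after which a rank-$6$ lattice forces $\widetilde{Y}=\bC^3$, a finite extension of a rank-$2$ lattice forces $\widetilde{Y}=\bC\times W_1$ with $W_1$ a K$3$ surface (the only simply connected surface with trivial canonical bundle, whether it is viewed as a Calabi--Yau or as an irreducible holomorphic symplectic factor), and finite $\pi_1(Y)$ excludes both a flat factor and any symplectic factor, since the latter exist only in even dimension. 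Where you genuinely differ is case (i): you bypass the decomposition theorem and instead identify the rank-$6$ rows of the tables with the two free quotients of Theorem \ref{mainth2}, which were already shown there to lie in $\cF^A_{\fD_4}$; this is more direct than the paper's argument (though your citation of Proposition \ref{Hdg of quotient of X} for ``smooth iff free'' is off --- that equivalence comes from the local linearization $\mathrm{diag}(1,-1,-1)$ in Proposition \ref{quotientCYthreefold}). Also note that the explicit Kummer-surface verification you sketch at the end of case (ii) is not needed for this corollary: it is precisely the content of the corollary that follows it in the paper, which identifies the $(\bZ/2\bZ)^2$-\'etale cover as Km$_2(B)\times E''$, so you would be proving a stronger statement than required.

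One point you should fix: in your opening framing you attribute the splitting $\widetilde{Y}=\bC^{k/2}\times\widetilde{S}$ to Bieberbach's theorems (Remark \ref{exact-sequence}). That step is not valid as stated. In cases (ii) and (iii) the manifold $Y$ is a nontrivial resolution, hence not flat, and $\pi_1(Y)$ is not a $6$-dimensional crystallographic group (it is finite in case (iii), and a $2$-dimensional crystallographic group in case (ii)), so Bieberbach's theorems give no information about $\widetilde{Y}$; the splitting of the universal cover of a compact K\"ahler manifold with trivial canonical bundle is exactly what Beauville--Bogomolov (via Cheeger--Gromoll) provides, and it is the tool you invoke later anyway. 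The same gloss occurs in your case (iii): the assertion that a compact simply connected $3$-fold with trivial canonical bundle has $h^{2,0}=0$, hence is a Calabi--Yau $3$-fold, is true but is itself a consequence of the decomposition/holonomy argument --- which is how the paper concludes, by observing that no odd-dimensional irreducible holomorphic symplectic factor can occur. Once the splitting is credited to the correct theorem, your argument goes through.
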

\begin{proof}
Let $\widetilde{Y}$ be the universal cover of $Y$. By the Beauville-Bogomolov decomposition theorem \cite[Theorem 14.15]{GHJ}:
$$\widetilde{Y}= \bC^n \times \prod\limits_{i} W_i \times \prod\limits_{j}Z_j$$
where $W_i$'s are simply connected Calabi-Yau manifolds and $Z_j$'s are irreducible holomorphic symplectic manifolds. If $\pi_1(Y)$ is a rank-$6$ lattice then $\widetilde{Y}=\bC^3$ and we have the first statement. If $\pi_1(Y)$ is a finite extension of a rank-$2$ lattice the only possibility is $\widetilde{Y}=\bC\times W_1$ where $W_1$ is a K$3$ surface. In the last case we cannot find a complex space as factor of $\widetilde{Y}$ and since irreducible holomorphic symplectic manifolds exist only in even dimension, $\widetilde{Y}$ is a simply-connected Calabi-Yau $3$-fold.
\end{proof}

Let us consider case (ii) of Corollary \ref{fundgroup}: these manifolds, in fact, admit the product of a K$3$ surface and an elliptic curve as étale Galois cover.  Indeed $\pi_1(Y)$ fits in the following exact sequence
\begin{equation}\label{exactfundgroup}
0\longrightarrow L_Y \longrightarrow \pi_1(Y) \longrightarrow G' \longrightarrow 0
\end{equation}
where $L_Y$ is a rank-$2$ lattice and $G'$ is a finite group; so $Y$ admits a $G'$-cover isomorphic to the product of a K$3$ surface and the elliptic curve $\bC/L_Y$.

\begin{definition}
A \emph{Calabi-Yau $3$-fold of type K} is a Calabi-Yau $3$-fold which admits the product of a K$3$ surface and an elliptic curve as finite étale Galois cover.
\end{definition}
 
\begin{remark} Similar to Remark \ref{notranslation}, we assume that the Galois group $G'$ of the cover a Calabi-Yau $3$-fold of type $K$ does not contains any element of type $(id,t)$ where $t$ is a translation on the elliptic curve. Thus, we say a \emph{Calabi-Yau $3$-fold of type $K$ with the group $G'$}.
\end{remark}

By looking at the tables in Appendix \ref{appendix}, case (ii) of Corollary \ref{fundgroup} appears only for two groups $\Upsilon \le \text{Aut}(X)$ and in both cases the desingularization $Y$'s of $X/\Upsilon$ are Calabi-Yau $3$-folds of type $K$ with the group $(\bZ/2\bZ)^2$. More precisely,
 
\begin{corollary}
Let $Y$ be a Calabi-Yau $3$-fold of type $K$ with the group $(\bZ/2\bZ)^2$ obtained as desingularization of $X/\Upsilon$ with $X\in\cF^A_{\fD_4}$ and $\Upsilon \le \textup{Aut}(X)$. The followings hold.
\begin{romanenumerate}
\item If $\Upsilon=\langle \alpha_X \rangle$ is induced by $\alpha_{A}(\ul{z})=(z_1, z_2, z_3+\dfrac{1}{2})\in \textup{Aut}(A)$, the $(\bZ/2\bZ)^2$-étale cover is isomorphic to Km$_2(B)\times E''$ where Km$_2(B)$ is the Kummer surface and $E''=E'/\alpha_{A}$.

\item If $\Upsilon=\langle \alpha_X , \beta_X \rangle$ is induced by $\alpha_{A}(\ul{z})=(z_1, z_2, z_3+\dfrac{1}{2})$ and $\beta_A(\ul{z})=(z_1, z_2, z_3+\dfrac{\tau'}{2})$ in $\textup{Aut}(A)$, the $(\bZ/2\bZ)^2$-étale cover is isomorphic to Km$_2(B)\times E'$.

\end{romanenumerate}
\end{corollary}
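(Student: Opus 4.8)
The plan is to make the étale cover explicit by climbing up the tower of quotients to the original abelian threefold $A$ and identifying the subgroup that produces a product of a K3 surface and an elliptic curve. Recall that $X=A/G$ with $A=B\times E'$, where $B=(E\times E)/w_{|E\times E}$ is an abelian surface and $G\simeq\fD_4$ acts preserving the product decomposition via $G_2=G_{|B}$ and $G_1=G_{|E'}$. In each of the two cases the relevant $Y$ is the desingularization of $X/\Upsilon=A/\langle G,\Upsilon_A\rangle$, where $\Upsilon_A$ is generated by the stated translation(s) in the $E'$-direction. First I would write down the full group $\widetilde G:=\langle G,\Upsilon_A\rangle$ acting on $A=B\times E'$ and isolate the normal subgroup $N\unlhd\widetilde G$ consisting of those elements acting as translations in the $E'$-factor (together with the identity on $B$); the quotient $\widetilde G/N$ is the Galois group $(\bZ/2\bZ)^2$ that should remain as deck transformations of the type-K cover.

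The key computation is to check that $N$ acts freely and that $A/N$ is exactly a product of the desired surface with an elliptic curve. Because $N$ is generated by translations along $E'$ and the surface part of $\widetilde G$ acts only through $G_2\simeq\fD_4$, quotienting $A=B\times E'$ by $N$ splits as $(B/N_B)\times(E'/N_{E'})$, where $N_B,N_{E'}$ are the projections of $N$ to the two factors. In case (ii) the translations $\tfrac{1}{2}$ and $\tfrac{\tau'}{2}$ generate the full $E'[2]$, so $E'/N_{E'}\simeq E'$ (an isogeny of degree $4$ back to an isomorphic curve) while the $B$-factor is acted on by the order-two element $w_2$ built into $\fD_4$; the surface that survives as the K3 is $\mathrm{Km}_2(B)$, the blow-up resolution of $B/\langle\pm 1\rangle$, appearing precisely because the residual involution in $G_2$ acts as $-1$ on the holomorphic two-form of $B$. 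In case (i) only the single translation by $\tfrac{1}{2}$ appears, so $E''=E'/\alpha_A$ is the degree-two quotient curve, and again the K3 factor is $\mathrm{Km}_2(B)$. I would verify in both cases that the residual group acting on $\mathrm{Km}_2(B)\times(E'\text{ or }E'')$ is indeed $(\bZ/2\bZ)^2$, acts without translations in the elliptic direction (matching the standing convention for type-K threefolds), and recovers $Y$ upon taking the quotient and resolving. Theorem \ref{hodgeorbifold} together with Proposition \ref{quotientCYthreefold} guarantees the resolution is crepant and Calabi-Yau, so compatibility of the two descriptions of $Y$ is automatic once the covers agree up to the finite group.

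The main obstacle I expect is the identification of the surface factor as the Kummer surface $\mathrm{Km}_2(B)$ rather than some other K3, and in particular tracking how the blow-up $\beta$ of the singular locus of $X/\Upsilon$ restricts to the Kummer resolution on the surface factor. The involution that produces the Kummer surface is the image in $G_2$ of an order-two element of $\fD_4$ acting as $-\mathrm{id}$ on $B$; one must confirm that, after passing to the quotient by $N$, this is exactly the involution whose sixteen fixed points get resolved into the $(-2)$-curves of $\mathrm{Km}_2(B)$, and that these match the exceptional locus of $\beta$ lying over the curve components of $\mathrm{Fix}$ recorded in Table \ref{table2}. Once the fixed-point analysis lines up with the entries of that table, the decomposition $\widetilde Y=\mathbb{C}\times W_1$ from Corollary \ref{fundgroup}(ii) pins down $W_1$ as $\mathrm{Km}_2(B)$ uniquely, completing the argument.
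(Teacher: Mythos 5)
Your overall route is the same as the paper's: climb to $A$, quotient first by the translations in the $E'$-direction, then by the central involution of $\fD_4$, resolve to get $\mathrm{Km}_2(B)\times E''$ (resp.\ $\mathrm{Km}_2(B)\times E'$), and check that the residual $(\bZ/2\bZ)^2$ acts freely and without translations on the elliptic factor. But two steps, as written, are wrong. First, the group-theoretic bookkeeping: with $N$ defined as the set of elements acting trivially on $B$ and by translation on $E'$, one gets $N=\langle\alpha_A\rangle$ in case (i) and $N=\langle\alpha_A,\beta_A\rangle$ in case (ii), so $\widetilde G/N$ has order $8$, not $4$: it still contains the class of $r^2$, which acts on $A/N=B\times E''$ as $(-1_B,\mathrm{id})$ and fixes $16$ copies of the elliptic curve, so it cannot be the deck group of an étale cover. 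The subgroup of $\widetilde G$ cutting out the étale cover is $\langle N,r^2\rangle$, and the Galois group is $\widetilde G/\langle N,r^2\rangle\simeq\langle r,s\rangle/\langle r^2\rangle\simeq(\bZ/2\bZ)^2$. Your text is internally inconsistent on this point: after declaring $\widetilde G/N\simeq(\bZ/2\bZ)^2$ you quotient by a further involution and still claim a residual $(\bZ/2\bZ)^2$. (Also, that involution is the class of $r^2$, not ``$w_2$'': in the paper's notation $w_2$ is the translation by $(\tfrac12,\tfrac12)$ already killed in forming $B=(E\times E)/w_2$, and it is not an element of $G_2$.)

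Second, your stated reason for the appearance of the Kummer surface is backwards. You write that $\mathrm{Km}_2(B)$ appears ``because the residual involution in $G_2$ acts as $-1$ on the holomorphic two-form of $B$.'' In fact $-1_B$ acts as $+1$ on $H^{2,0}(B)$, since $(-dz_1)\wedge(-dz_2)=dz_1\wedge dz_2$; this symplectic behaviour is precisely why $B/\langle\pm1\rangle$ has trivial canonical bundle away from the nodes and admits a crepant resolution which is a K3. An involution acting as $-1$ on the two-form would kill $H^{2,0}$ of the quotient and could never produce a K3. Finally, what makes $f\colon\mathrm{Km}_2(B)\times E''\to Y$ étale — not merely generically finite — is that the residual $(\bZ/2\bZ)^2$ acts freely, which in the paper follows because this action is induced by the free action of $\langle r,s\rangle$ on $A$; your phrase ``recovers $Y$ upon taking the quotient and resolving'' obscures this, since if the residual action is free there is nothing left to resolve after the quotient, and if it were not free the cover would not be étale. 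Your closing appeal to the Beauville--Bogomolov decomposition cannot substitute for these checks: Corollary \ref{fundgroup}(ii) only says the universal cover is $\bC\times S$ for some K3 surface $S$; identifying $S$ with $\mathrm{Km}_2(B)$ is exactly what the explicit tower of quotients is for.
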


\begin{proof}
\begin{romanenumerate}
\item Let us consider the group $\Gamma=\langle r,s,\alpha_A \rangle \le\text{Aut}(A)$. 
We obtain the following diagram:
\begin{center}
\begin{tikzcd}
A \arrow[d, "16:1" '] \arrow[r,"2:1"] & A/\alpha_A = B\times E''\arrow[d,"2:1"] \\
A/\Gamma  & (B\times E'')/r^2 \arrow[l,"4:1","g"']  \\
Y \arrow[u,"\beta"]&  Km_2(B)\times E'' \arrow[u,"\gamma"] \arrow[l, "4:1", "f"'] 
\end{tikzcd}
\end{center}
where $\beta$ and $\gamma$ are resolution of singularities.
Since $\langle \alpha_A \rangle \unlhd \Gamma$, $(\alpha_A)_{|B}=id$ and $(\alpha_{A})_{|E'}=(r_1)^2$ is a translation we obtain $\dfrac{A}{\alpha_A}= B \times \dfrac{E'}{(r_1)^2}$ and we denote the elliptic curve $E'':=\dfrac{E'}{(r_1)^2}$. The automorphism $r^2$ descends to an automorphism on $B\times E''$ which acts as $\text{diag}(-1,-1)$ on the abelian surface $B$ and as the identity on $E''$, hence $(B\times E'')/r^2$ is birational to $Km_2(B)\times E''$.
The action of the group $\Gamma/\langle \alpha_A, r^2 \rangle$  defines free action of $(\bZ/2\bZ)^2$ on $(B\times E'')/r^2$. Moreover it preserves the singular locus of $(B\times E'')/r^2$, thus it lifts to a free action of $(\bZ/2\bZ)^2$ on $Km_2(B)\times E''$ which does not contain any element $(id, t)$ with $t$ a translation on $E''$. Therefore, the quotient $f$ is the $(\bZ/2\bZ)^2$-étale cover of $Y$ whose Galois group is induced by $\langle r, s \rangle$.


\item Let us consider the group $\Gamma=\langle r, s, \alpha_A,\beta_A \rangle \le\text{Aut}(A)$. 
Similar to the previous case we obtain the following diagram:
\begin{center}
\begin{tikzcd}
A \arrow[d, "32:1" '] \arrow[r,"4:1"] & A/\langle \alpha_A,\beta_A \rangle \simeq B\times E'\arrow[d,"2:1"] \\
A/\Gamma  & (B\times E')/r^2 \arrow[l,"4:1","g"']  \\
Y \arrow[u,"\beta"]&  Km_2(B)\times E' \arrow[u,"\gamma"] \arrow[l, "4:1", "f"'] 
\end{tikzcd}
\end{center}
where we observe that $\langle \alpha_A,\beta_A\rangle \simeq E'[2]$, hence $A/\langle \alpha_A,\beta_A\rangle\simeq A=B\times E'$.
As above, the free action of the group  $\langle r, s \rangle$ induces a free action of $(\bZ/2\bZ)^2$ on $Km_2(B)\times E'$ which does not contain any element $(id, t)$ with $t$ a translation on $E'$. Therefore, the quotient $f$ is the $(\bZ/2\bZ)^2$-étale cover of $Y$ whose Galois group is induced by $\langle r, s \rangle$.
\end{romanenumerate}

\end{proof}

\section{A map of degree two on $\cF^A_{\fD_4}$}\label{sec: map on family} 
We construct a degree two map in $\cF^A_{\fD_4}$ by using the existence of quotients of $X$ which belong again to $\cF^A_{\fD_4}$.

\begin{lemma}\label{doublecover}
Let $X$ as defined in Definition \ref{manifoldX}. There are two double covers of $X$ which belong to $\cF^A_{\fD_4}$ and they are isomorphic. \end{lemma}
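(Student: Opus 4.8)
The plan is to reverse the construction of Theorem~\ref{mainth2}, where we found that $X$ itself arises as a \emph{quotient} $X/(\alpha_j)_X$ that lands back in $\cF^A_{\fD_4}$. Here I want the dual statement: to exhibit $X$ as the \emph{base} of a double cover, i.e. to produce an abelian $3$-fold $\widehat{A}$ isogenous to $A'$ and a subgroup $\widehat{H}$ acting freely with quotient $\widehat{X}\in\cF^A_{\fD_4}$ such that $\widehat{X}\to X$ is étale of degree two. Concretely, $X=A'/H$, and a degree-two cover that is again of type $A$ with group $\fD_4$ should be of the form $\widehat{X}=\widehat{A}/H$ where $\widehat{A}\to A'$ is a suitable degree-two isogeny compatible with the $\fD_4$-action. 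First I would look for index-two sublattices $\Lambda''\subset\Lambda=\pi_1(A')$ that are stable under the linear representation $\rho$ of $H$, so that $H$ descends to act on $\widehat{A}=\bC^3/\Lambda''$; the covering $\widehat{A}\to A'$ then induces $\widehat{X}=\widehat{A}/H\to X=A'/H$, and one checks this quotient is étale of degree two because $H$ acts freely on both tori and the covering is unramified.

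The key steps, in order, are as follows. (1) Identify the $\rho(H)$-invariant index-two sublattices of $\Lambda$: since $\Lambda=\Lambda_1\oplus\Lambda_2\oplus\Lambda_3$ with $\rho(r),\rho(s)$ permuting/negating the first two factors and acting as $\pm1$ on the third, the natural candidates are the sublattices obtained by doubling in the $E'$-direction (the factor $\Lambda_3=\tau'\bZ\oplus\bZ$), giving two essentially different index-two refinements according to whether one halves along $\bZ$ or along $\tau'\bZ$ — this is exactly where the two promised covers come from, and it mirrors the two free automorphisms $\alpha_1,\alpha_2$ with $t_3\in\{\tfrac{\tau'}{2},\tfrac{\tau'+1}{2}\}$ found in Theorem~\ref{mainth2}. (2) For each such sublattice verify that $H$ still acts freely and fixed-point-freely on $\widehat{A}$, using the translation data \eqref{group H2} and the same direct computation as in Lemma~\ref{manifoldX}; conclude $\widehat{X}=\widehat{A}/H$ is a GHM $3$-fold with group $\fD_4$, hence by the remark after Theorem~\ref{family d4} it lies in $\cF^A_{\fD_4}$. (3) Check that the induced map $\widehat{X}\to X$ is étale of degree two: degree two is immediate from $[\Lambda:\Lambda'']=2$, and étaleness follows because the $H$-action upstairs and downstairs are compatible and free, so no extra ramification is introduced.

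The last assertion, that the two double covers are \emph{isomorphic}, is what I expect to be the real content. The plan is to produce an explicit isomorphism between the two covers $\widehat{X}_1,\widehat{X}_2$ corresponding to the two invariant sublattices. The most natural route is to find an element of $\textup{N}_{\textup{Aut}(E')}(G_1)$ — an automorphism of $E'$ normalizing the $\fD_4$-action — that carries one index-two sublattice of $\Lambda_3$ to the other; such an automorphism would intertwine $\widehat{A}_1$ and $\widehat{A}_2$ compatibly with $H$ and thus descend to an isomorphism $\widehat{X}_1\simeq\widehat{X}_2$. Alternatively one can invoke Remark~\ref{modulispace}: both $\widehat{X}_j$ are members of the $2$-dimensional family $\cF^A_{\fD_4}$ and are therefore classified by their associated pair $(\mu,\mu')\in\cM_{1,1}\times\cM_{1,1}$; since the covering construction only rescales the lattice in the $E'$-direction, the resulting moduli pair is the same for both, and hence the two covers coincide in the family. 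I would likely present the explicit isogeny/automorphism argument as the main proof and cite Remark~\ref{modulispace} as a conceptual confirmation.

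The main obstacle, I anticipate, is pinning down exactly which index-two sublattices give covers that are still of type $A$ with group $\fD_4$ (as opposed to accidentally producing translations in the descended group, which would violate Remark~\ref{notranslation}, or a non-$\fD_4$ automorphism group), and then verifying the isomorphism claim cleanly rather than by an opaque lattice computation. The delicate bookkeeping with the half-period translations $u_1,u_2,u_3$ and with $w=s^2$ means one must be careful that the element $w$ still acts as a translation of the correct order on $\widehat{A}$, so that $\widehat{A}/\langle\widehat{w}\rangle$ remains an abelian $3$-fold and the group $\widehat{G}=\widehat{H}/\widehat{w}$ is genuinely $\fD_4$; this is the step most likely to require explicit verification.
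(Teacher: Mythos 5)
Your overall strategy (realize the covers as $\widehat{A}/\widehat{H}$ for a degree-two isogeny $\widehat{A}\to A'$ compatible with the group, then identify them via moduli) is a reasonable instinct, but your step (1) misidentifies the covers, and this breaks everything downstream. Of your two candidate index-two sublattices of $\Lambda_3=\bZ\oplus\tau'\bZ$, namely $\bZ\oplus 2\tau'\bZ$ and $2\bZ\oplus\tau'\bZ$, only the first yields a double cover. The obstruction is the element $r$: since $u_3=\tfrac14$, one has $r^4=$ translation by $(0,0,4u_3)=(0,0,1)$, which is the identity on $A'$ but is \emph{not} the identity on $\bC^3/(\Lambda_1\oplus\Lambda_2\oplus M)$ unless $1\in M$. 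For $M=2\bZ\oplus\tau'\bZ$ (and likewise for the third index-two sublattice $\langle 2,1+\tau'\rangle$, which you do not consider) the lift $\widehat{r}$ has order $8$, so $\langle\widehat{r},\widehat{s}\rangle$ has order $32$, contains the deck translation $(0,0,1)$, and $\widehat{A}/\langle\widehat{r},\widehat{s}\rangle=X$ itself: your step (3), ``degree two is immediate from $[\Lambda:\Lambda'']=2$,'' fails because the group upstairs doubles, and the construction collapses back to $X$ rather than producing a cover. A second symptom that the identification is wrong: had both of your candidates worked, their third curves would be $E_{2\tau'}$ and $E_{\tau'/2}$, which are related by a cyclic $4$-isogeny and are \emph{not} isomorphic for general $\tau'$, so the isomorphism you then set out to prove would be false; correspondingly, the automorphism of $E'$ you hope to find, carrying one sublattice to the other, does not exist, since $\textup{Aut}(E')=\{\pm 1\}$ for general $\tau'$ and $\pm 1$ preserves every sublattice.

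The missing idea is that the two covers do not differ in the lattice at all: every double cover of $X$ lying in $\cF^A_{\fD_4}$ has the same covering torus $E\times E\times E_{2\tau'}$ (note $\bZ\oplus 2\tau'\bZ=\bZ\oplus(2\tau'-1)\bZ$); what distinguishes them is the \emph{lift} of $H$, equivalently the index-two subgroup of $\pi_1(X)$: the lift of $r$ can be taken with translation part $\tfrac14$, or twisted by the deck translation to $\tfrac14+\tau'$. Stability of a sublattice under the linear representation $\rho$ can never detect this, because the twist changes only the affine part of the lift. The paper avoids this trap by arguing top-down: any cover $W\to X$ in the family exhibits $X=W/\iota$ with $\iota$ a free involution of $W$; Theorem \ref{mainth2}, applied to $W$ rather than to $X$, says $\iota$ is induced by $(0,0,\nu'/2)$ or $(0,0,(\nu'+1)/2)$, and the relation $A'=A'_j/\beta_j$ then forces the moduli $\nu'_1=2\tau'$ and $\nu'_2=2\tau'-1$, which differ by an element of $\mathrm{PSL}_2(\bZ)$, whence $Y_1\simeq Y_2$. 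Your fallback appeal to Remark \ref{modulispace} is essentially the paper's concluding step, but it only becomes available once the covers have been correctly identified, which your sublattice analysis does not accomplish.
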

\begin{proof}
According to Theorem \ref{mainth2}, there exist exactly $2$ quotients $X/(\alpha_j)_X$ of $X$ which belong again to $\cF^A_{\fD_4}$. Equivalently, $X$ is the double étale quotient of other two manifolds $Y_1, Y_2 \in \cF^A_{\fD_4}$ which fit the following commutative diagram
\begin{center}
\begin{tikzcd}[column sep=large]
A'_1 \arrow[dr] \arrow[d,"16:1"']& A' \arrow[d]& A'_2 \arrow[dl] \arrow[d,"16:1"]\\
Y_1 \arrow[r, "2:1" '] & X & Y_2 \arrow[l, "2:1"] 
\end{tikzcd}
\end{center}
where $A'_j=E_{\nu_j}\times E_{\nu_j}\times E_{\nu'_j}$, with $\nu_j, \nu'_j\in \mathfrak{h}$, are the $H_j$-étale cover of $Y_j$ by Remark \ref{modulispace} with $\abs{H_j}=16$. Moreover, by Theorem \ref{mainth2}  we have $X=Y_1/(\beta_1)_{Y_1}$ and $X=Y_2/(\beta_2)_{Y_2}$ where $\beta_j$'s are translations on $A'_j$ respectively by the points $(0,0, \frac{\nu'_1}{2})$ and $(0,0, \frac{\nu'_2+1}{2})$.
Since $A'=A'_j/\beta_j$ for $j=1,2$ we have $$\tau=\nu_1=\nu_2 \qquad 2\tau'=\nu'_1 \qquad 2\tau'=\nu'_2+1. $$
In particular, we see that $\nu'_2=\mathfrak{T}(\nu'_1)$ with $\mathfrak{T}=\begin{pmatrix} 1 & 1 \\ 0 & 1
\end{pmatrix}\in \text{SL}_2(\bZ)$. Thus $\nu'_2$ and $\nu'_1$ define the same point in $\cM_{1,1}$ and so $E'_{\nu'_1}\simeq E_{\nu'_2}$. Consequently, $Y_1\simeq Y_2$. 
\end{proof}

\begin{remark} We denote by $Y_{\mu,\mu'}$ the Calabi-Yau $3$-fold in $\cF^A_{\fD_4}$ whose $16$-étale cover is $E_\mu\times E_\mu \times E_{\mu'}$. \end{remark}

\noindent Combining Lemma \ref{doublecover} and Theorem \ref{mainth2}, we obtain the following result.
\begin{theorem}\label{mainth4}
There exists a $2:1$ map $f: \quad \cF^A_{\fD_4} \xrightarrow{\quad 2:1\quad} \cF^A_{\fD_4}$ such that the image of $Y_{\mu,\mu'}\in \cF^A_{\fD_4}$ is its étale double cover $Y_{\mu, 2\mu'}$ in $\cF^A_{\fD_4}$ and the preimages of $Y_{\mu, \mu' }\in \cF^A_{\mathfrak{D}_4}$ are its two étale quotients $Y_{\mu, \frac{\mu'}{2}}$ and $Y_{\mu, \frac{\mu'}{2}+\frac{1}{2}}$ in $\cF^A_{\fD_4}$.
\end{theorem}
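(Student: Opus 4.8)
The plan is to construct $f$ as the intrinsic operation ``pass to the canonical double cover in $\cF^A_{\fD_4}$'' and then to translate it into the moduli coordinates $(\mu,\mu')\in\cM_{1,1}\times\cM_{1,1}$ provided by Remark \ref{modulispace}. First I would set $f(Y_{\mu,\mu'})$ to be the double cover of $Y_{\mu,\mu'}$ lying in $\cF^A_{\fD_4}$. By Lemma \ref{doublecover} this cover exists, belongs to the family, and is unique up to isomorphism, so $f$ is well defined as a map of isomorphism classes. The same lemma identifies its parameters: writing $E_\mu\times E_\mu\times E_{\mu'}$ for the abelian cover, the computation in its proof shows that the cover is $Y_{\mu,2\mu'}$, since the translation by the $2$-torsion point $\mu'$ on $E_{2\mu'}=\bC/\langle 1,2\mu'\rangle$ recovers $E_{\mu'}$. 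This yields the coordinate form $f(Y_{\mu,\mu'})=Y_{\mu,2\mu'}$ asserted in the statement.

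Next I would determine the fibres using Theorem \ref{mainth2}. A manifold $Y'$ satisfies $f(Y')=Y_{\mu,\mu'}$ precisely when $Y_{\mu,\mu'}$ is the double cover of $Y'$, that is, when $Y'$ is an étale quotient of $Y_{\mu,\mu'}$ lying again in $\cF^A_{\fD_4}$. By Theorem \ref{mainth2} there are exactly two such quotients, namely the quotients of $Y_{\mu,\mu'}$ by the two free automorphisms it provides, and a short lattice computation identifies them with $Y_{\mu,\frac{\mu'}{2}}$ and $Y_{\mu,\frac{\mu'}{2}+\frac{1}{2}}$. It then remains to check consistency with the formula for $f$: indeed $f(Y_{\mu,\frac{\mu'}{2}})=Y_{\mu,\mu'}$, while $f(Y_{\mu,\frac{\mu'}{2}+\frac{1}{2}})=Y_{\mu,\mu'+1}=Y_{\mu,\mu'}$, the last equality because $E_{\mu'+1}\simeq E_{\mu'}$ in $\cM_{1,1}$. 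Hence every fibre consists of these two points and $f$ is surjective, so $f$ is a $2:1$ map with the fibres described.

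The step I expect to require the most care is the moduli bookkeeping around the doubling $\mu'\mapsto 2\mu'$. Although the double cover is intrinsically well defined by Lemma \ref{doublecover}, the naive formula depends on the chosen representative in $\mathfrak{h}$, so I would confirm that the prescription respects the $\textup{PSL}_2(\bZ)$-action: the shift $\mu'\mapsto\mu'+1$ is immediate, whereas for the inversion $\mu'\mapsto-1/\mu'$ one must match the specific index-two sublattice of $\langle 1,\mu'\rangle$ singled out by the $\fD_4$-structure rather than an arbitrary one. Finally I would verify that the two preimages $Y_{\mu,\frac{\mu'}{2}}$ and $Y_{\mu,\frac{\mu'}{2}+\frac{1}{2}}$ are genuinely distinct for generic $\mu'$, by comparing the $j$-invariants of $E_{\mu'/2}$ and $E_{\mu'/2+1/2}$, so that the generic fibre really has two elements and the ramification of $f$ is confined to the locus where these two elliptic curves coincide.
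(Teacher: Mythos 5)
Your proposal is correct and takes essentially the same route as the paper: the paper's entire proof of Theorem \ref{mainth4} is the one-line combination of Lemma \ref{doublecover} (well-definedness of the double cover and the formula $Y_{\mu,\mu'}\mapsto Y_{\mu,2\mu'}$) with Theorem \ref{mainth2} (the preimages are the two étale quotients $Y_{\mu,\mu'/2}$ and $Y_{\mu,\mu'/2+1/2}$), which is exactly your two main steps. The additional verifications you flag — $\textup{PSL}_2(\bZ)$-equivariance of $\mu'\mapsto 2\mu'$ and generic distinctness of the two preimages — are not carried out in the paper either, which instead relies implicitly on the parametrization of Remark \ref{modulispace}.
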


\begin{remark}
It is worth noting that the existence of exactly two quotients of $X$ which belong again to $\cF^A_{\fD_4}$ tell us that there exist others two constructions for an abelian $3$-folds with a free action of $\fD_4$: they are $(E_\mu\times E_\mu \times E_{\mu'})/\text{T}_j$ where $\text{T}_j=\langle w, \gamma_j\rangle$ and $\gamma_j$ are the translation by the point $(0,0,\frac{\mu'+j}{2})$ for $j=0,1$. This result was already stated in \cite[Theorem 2.7]{HK}, but we observe that  there is an error on the description of $\gamma_j$.
\end{remark}

\section{The Calabi-Yau $3$-folds of type $A$ with the group $(\bZ/2\bZ)^2$}\label{section 10}

\subsection{Action of finite group on complex tori}
We refer to \cite{CC}.

\begin{definition}
Let $\Lambda$ be a free abelian group of even rank and $G\longrightarrow \text{GL}(\Lambda)$ be a faithful representation of a finite group $G$. A \emph{$G$-Hodge decomposition} is a decomposition into $G$-invariant linear subspaces:
$$\Lambda\otimes \bC=H^{1,0} \oplus H^{0,1} \quad \ol{H^{1,0}}=H^{0,1}.$$
\end{definition}

\noindent Splitting $\Lambda\otimes \bC$ into isotypic components, we write $\Lambda \otimes \bC=\bigoplus\limits_{\chi\in \text{Irr}(G)} U_{\chi}$ and $U_\chi =W_\chi \otimes M_\chi$. Here, $W_\chi$ is the $\bZ$-module corresponding to the irreducible representation $\chi$ and $M_\chi\simeq \bC^{m_\chi}$ encodes how many times the representation with character $\chi$ appears in the decomposition. \\
Thus, we obtain:
$$V :=H^{1,0} =\bigoplus\limits_{\chi\in \text{Irr}(G)}V_\chi$$
with $V_\chi=W_\chi \otimes M_\chi^{1,0}$.

\begin{definition}
The \emph{Hodge type of a $G$-Hodge decomposition} is the collection of the dimensions $\nu(\chi)=\text{dim}_{\bC}M^{1,0}_\chi$. Here, $\chi$ runs over all non-real characters.
\end{definition}

\begin{remark}\label{Hdg decomposition}
\begin{romanenumerate}
\item For $\chi$ non real it holds $\nu(\chi)+\nu(\ol{\chi})=\text{dim}_{\bC}(M_\chi)$.
\item All $G$-Hodge decompositions of a fixed Hodge type are parametrized
as follows: for a real irreducible
character $\chi$, one chooses a $\dfrac{1}{2}$dim$_{\bC}(M_\chi$)-dimensional subspace
of $M_\chi$, and for a non-real irreducible character, one chooses a $\nu(\chi)$-dimensional subspace of $M_\chi$ . 
\item The Hodge type is a invariant by deformation, see \cite[Theorem 81]{Cat15}.
\end{romanenumerate}
\end{remark}

Therefore, to give a classification of the complex tori $T=\bC/\Lambda$ with a free action of $G$ (which does not contain any translation), one needs to determine all possible complex structure on $T$ such that the action of $G$ is holomorphic. This corresponds to determine all possible $G$-Hodge decompositions on $\Lambda$.

\subsection{The construction of the family $\cF^A_{(\bZ/2\bZ )^2}$}\label{family 2Z}
Let $A/G$ be a Calabi-Yau $3$-fold of type $A$ with $G\simeq (\bZ/2\bZ)^2$, by \cite[Theorem 0.1]{OS}
the group $G=\langle a, b \rangle$ has an unique faithful representation $\rho: G \longrightarrow \text{GL}_3(\bC)$:
\begin{equation}\label{representation rho'}
    \rho(a)=\text{diag}(-1,-1,1) \qquad 
    \rho(b)=\text{diag}(-1,1,-1)
\end{equation}
We observe that $\rho=\rho_1\oplus\rho_2\oplus\rho_3$ with $\rho_j$ one dimensional irreducible representations. 
We consider the abelian $3$-fold $A:=E_1\times E_2\times E_3$ where $E_j:= \bC/(\bZ\oplus \tau_j\bZ)$ is an elliptic curve with $\tau_j\in \mathfrak{h}$, for $j=1,2,3$ and the automorphisms:
$$ a(\ul{z})=(-z_1,-z_2,z_3+u_3) \qquad b(\ul{z})=(-z_1+u_1, z_2+u_2, -z_3) $$
where $u_j\in E_j[2]\setminus\{0\}$. One can easily prove that $G=\langle a, b \rangle$ defines a free action of $(\bZ/2\bZ)^2$ on $A$. 

\begin{definition}\label{CY Z2}
With the notation above and fixed $u_j\in E_j[2]\setminus\{0\}$ for $j=1,2,3$, we define the Calabi-Yau $3$-fold $X:=A/G$ of type $A$ with the group $(\bZ/2\bZ)^2$.
\end{definition}

\noindent By using \eqref{representation rho'} we find: 
\begin{equation}\label{Hdg of X}
 H^{1,1}(X)\simeq \langle dz_j \wedge d\ol{z_j} \rangle_{j=1,2,3} \qquad H^{2,1}(X)\simeq \langle dz_j \wedge dz_k\wedge d\ol{z_j} \rangle_{i\not=k\not=j=1,2,3}
 \end{equation}
hence $h^{1,1}(X)=h^{2,1}(X)=3$, see also \cite[Theorem 0.1]{OS}.

\begin{theorem}\label{mainth6}
There exists a $3$-dimensional family of $(\bZ/2\bZ)^2$-equivariant complex structures on a $3$-dimensional complex torus $T$. 
The family of Calabi-Yau $3$-folds of type $A$ with the group $(\bZ/2\bZ)^2$ is irreducible and $3$-dimensional.
\end{theorem}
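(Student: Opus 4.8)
The plan is to mirror the strategy behind Theorem \ref{family d4}, replacing the explicit Teichmüller description of \cite{CC} by the $G$-Hodge-decomposition count set up at the beginning of this section. I would fix the underlying real torus together with its $(\bZ/2\bZ)^2$-action, i.e. the rank-$6$ lattice $\Lambda$ and the representation $G\to \text{GL}(\Lambda)$; a $G$-equivariant complex structure on $T=\bC^3/\Lambda$ is then the same datum as a $G$-Hodge decomposition $\Lambda\otimes\bC=H^{1,0}\oplus H^{0,1}$. Since \cite[Theorem 0.1]{OS} forces $H^{1,0}$ to carry the unique faithful representation $\rho$ of \eqref{representation rho'}, the whole problem reduces to counting those $G$-Hodge decompositions whose $(1,0)$-part realises $\rho$, which is exactly the content of Remark \ref{Hdg decomposition}.

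First I would analyse the characters. The four irreducible characters of $(\bZ/2\bZ)^2$ take values in $\{\pm1\}$, so all of them are real; reading off \eqref{representation rho'} one sees that $\rho=\rho_1\oplus\rho_2\oplus\rho_3$ consists of the three distinct nontrivial characters, the trivial character being absent. Because $G$ acts on $\Lambda$ over $\bZ$, its action commutes with complex conjugation, and since each $\rho_j$ is real the conjugate part $H^{0,1}=\ol{H^{1,0}}$ carries the same representation $\rho$. Hence on $\Lambda\otimes\bC$ each $\rho_j$ occurs with multiplicity $2$ and the trivial character does not occur, so in the isotypic decomposition $\dim_\bC M_{\rho_j}=2$ for $j=1,2,3$ and every other multiplicity space vanishes.

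Next I would run the parameter count of Remark \ref{Hdg decomposition}. As all characters are real, the Hodge type (which only records $\nu(\chi)$ for non-real $\chi$) is vacuous, so by part (iii) every admissible $G$-Hodge decomposition lies in a single deformation family. By part (ii), such a decomposition is determined by a choice, for each real character $\rho_j$, of a $\frac{1}{2}\dim_\bC M_{\rho_j}=1$-dimensional subspace $M_{\rho_j}^{1,0}\subset M_{\rho_j}\cong\bC^2$, subject to $M_{\rho_j}^{1,0}\cap\ol{M_{\rho_j}^{1,0}}=0$ so that $M_{\rho_j}^{1,0}\oplus\ol{M_{\rho_j}^{1,0}}=M_{\rho_j}$. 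The set of such lines is $\bP^1\setminus\bP^1(\bR)$, a one–complex–dimensional space with two connected components each isomorphic to the upper half plane $\mathfrak{h}$; fixing the orientation of the complex structure selects one component. Running independently over the three characters yields a parameter space isomorphic to $\mathfrak{h}^3$, which is connected and of complex dimension $3$. This proves both assertions, and identifying $M_{\rho_j}^{1,0}$ with the modulus $\tau_j$ recovers the explicit tori $E_{\tau_1}\times E_{\tau_2}\times E_{\tau_3}$ of Definition \ref{CY Z2}, so $\cF^A_{(\bZ/2\bZ)^2}$ is irreducible and $3$-dimensional.

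The main obstacle is conceptual rather than computational: in contrast to the $\fD_4$ case, where the two copies of $E$ are tied together by the $2$-dimensional irreducible $\rho_2$ (leaving only $2$ moduli), here there are no non-real characters, so the customary Hodge-type invariant is empty and one must argue the whole family through the real-character clause of Remark \ref{Hdg decomposition}(ii). The points requiring care are the verification that each multiplicity space has even complex dimension $2$ (so that half-dimensional subspaces exist and vary in a genuine one-parameter family) and the observation that the freeness and no-translation requirements only constrain the discrete translation data $u_j\in E_j[2]\setminus\{0\}$ and therefore do not lower the continuous dimension of the family.
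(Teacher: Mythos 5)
Your counting argument coincides with the paper's: both fix the lattice with its $(\bZ/2\bZ)^2$-action, invoke the uniqueness of the faithful representation from \cite[Theorem 0.1]{OS} to get the isotypic decomposition $\Lambda\otimes\bC=\bigoplus_{j=1}^3 V_{\chi_j}\otimes M_{\chi_j}$ with three real nontrivial characters and $\dim_\bC M_{\chi_j}=2$, and then parametrize $G$-Hodge decompositions by a choice of a line $M^{1,0}_{\chi_j}\subset M_{\chi_j}$ for each $j$, giving dimension $3$. Where you genuinely diverge is the second assertion: the paper deduces irreducibility from deformation theory --- by Bogomolov--Tian--Todorov (Theorem \ref{deform_CY3}) the local deformation space of any member is $H^{2,1}(X)$, irreducible of dimension $3$, matching the dimension of the equivariant family --- whereas you make the parameter space explicit as a product of half-planes and argue connectedness directly. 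Your route is more elementary and has the merit of recording the nondegeneracy condition $M^{1,0}_{\chi_j}\cap\ol{M^{1,0}_{\chi_j}}=0$, which the paper's proof leaves implicit; the paper's route ties the family to the abstract Kuranishi space and thus also shows local completeness of the family, which your count alone does not address. One imprecision on your side: the space of admissible lines is $\bigl(\bP^1\setminus\bP^1(\bR)\bigr)^3$, which has $2^3=8$ components, and fixing the orientation of the underlying real $6$-torus only excludes the components with an odd number of conjugated factors, leaving $4$ components rather than one. This does not damage the conclusion: since $\bZ\oplus\tau\bZ=\bZ\oplus(-\tau)\bZ$, each component sweeps out the same tori $E_{\tau_1}\times E_{\tau_2}\times E_{\tau_3}$ with $\tau_j\in\mathfrak{h}$, so the resulting family of Calabi-Yau $3$-folds is still irreducible of dimension $3$, in agreement with the paper.
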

\begin{proof}
Let $T=V/\Lambda$ with $V=\bC^3$ be a complex torus.
By \cite[Theorem 0.1]{OS} 
we have a unique $(\bZ/2\bZ)^2$-decomposition on $\bC^3$ given by $V=V_{\chi_1}\oplus V_{\chi_2}\oplus V_{\chi_3}$ where $\chi_j$ are the irreducible characters corresponding to $\rho_j$. Hence:
$\Lambda\otimes \bC=V\oplus \ol{V}=\bigoplus\limits_{j=1}^3 V_{\chi_j}\otimes M_{\chi_j}$
where $V_{\ol{\chi_j}}=V_{\chi_j}$ since $\chi_j$ are real characters and dim$_{\bC}M_{\chi_j}=2$. 
By Remark \ref{Hdg decomposition} the parameters of the $(\bZ/2\bZ)^2$-Hodge decomposition are given by the choice of a $1$ dimensional subspace $M^{1,0}_{\chi_j}$ of $M_{\chi_j}$ for $j=1,2,3$. Hence we have a $3$-dimensional family of $(\bZ/2\bZ)^2$-equivariant complex structures on $T$.
Let $X$ be a Calabi-Yau $3$-fold of type $A$ with the group $(\bZ/2\bZ)^2$: by Theorem \ref{deform_CY3}, the space that parametrizes its local deformation is $H^{2,1}(X)$ which is irreducible and has dimension $3$, see \eqref{Hdg of X}. We conclude that the family of Calabi-Yau $3$-folds of type $A$ with the group $(\bZ/2\bZ)^2$ is irreducible and $3$-dimensional.
\end{proof}

\begin{remark}
We also deduce that the space which parametrizes the Calabi-Yau $3$-folds of type $A$ with the group $(\bZ/2\bZ)^2$ is $(\cM_{1,1})^3$. \end{remark}

\begin{definition}
We denote the family constructed in Theorem \ref{mainth6} by $\cF^A_{(\bZ/2\bZ)^2}$.
\end{definition}

\subsection{The Picard group of $X\in \cF^A_{(\bZ/2\bZ)^2}$}

\begin{theorem}\label{mainth6}
Let $X\in \cF^A_{(\bZ/2\bZ )^2}$ as in Definition \ref{CY Z2}. 
The Picard group is 
$$\textup{Pic}(X)\simeq\bZ^3 \oplus \pi_1(X). $$ 
\end{theorem}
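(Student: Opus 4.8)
The plan is to read off $\text{Pic}(X)$ from Lemma \ref{Picgroup}. Since $X\in\cF^A_{(\bZ/2\bZ)^2}$ is a Calabi--Yau $3$-fold (Definition \ref{CY Z2}), that lemma yields
\[
\text{Pic}(X)\simeq \bZ^{h^{1,1}(X)}\oplus \bA\text{b}(\pi_1(X)),
\]
where $\bA\text{b}(\pi_1(X))\simeq H_1(X,\bZ)$ is the torsion part. By \eqref{Hdg of X} we have $h^{1,1}(X)=3$, which produces the free summand $\bZ^3$. Thus proving the statement amounts to identifying the finite summand, namely the first homology / abelianized fundamental group of $X$, which is what the symbol $\pi_1(X)$ records in the formula.

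To make this summand explicit I would mimic the computation of Theorem \ref{commutator group}. By Lemma \ref{fund_group}, $\pi_1(X)\simeq\Gamma$ is a crystallographic group fitting in
\[
0\longrightarrow \Lambda \longrightarrow \Gamma \longrightarrow (\bZ/2\bZ)^2\longrightarrow 0,\qquad \Lambda=\pi_1(A)\simeq \Lambda_1\oplus\Lambda_2\oplus\Lambda_3,
\]
with $\Lambda_j=\pi_1(E_j)$; I fix lifts $\bar a,\bar b\in\Gamma$ of the generators $a,b$ of $G$. Since by \eqref{representation rho'} the linear parts $\rho(a),\rho(b)$ are commuting diagonal involutions, every commutator in $\Gamma$ has trivial linear part, hence is a translation lying in $\Lambda$. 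The basic commutators are easy to evaluate: for $\lambda\in\Lambda$ the translation by $t$, the commutator $[\bar g,\lambda]$ is the translation by $(\rho(g)-I)t$, which sweeps out $2\Lambda_1\oplus 2\Lambda_2$ for $g=a$ and $2\Lambda_1\oplus 2\Lambda_3$ for $g=b$, jointly generating $2\Lambda$; whereas $[\bar a,\bar b]$ is a translation whose $j$-th component is $\pm 2u_j$, a lattice vector that does \emph{not} lie in $2\Lambda$ because $u_j\in E_j[2]\setminus\{0\}$ forces $2u_j\notin 2\Lambda_j$. Hence I expect $[\Gamma,\Gamma]=\langle 2\Lambda,(2u_1,2u_2,2u_3)\rangle$, a finite-index sublattice of $\Lambda$, so that $\bA\text{b}(\pi_1(X))=\Gamma/[\Gamma,\Gamma]$ is the corresponding finite abelian group.

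The main obstacle is this final identification of $[\Gamma,\Gamma]$: one must verify that the subgroup $N:=\langle 2\Lambda,(2u_1,2u_2,2u_3)\rangle$ generated by the displayed commutators is already the \emph{whole} commutator subgroup, so that no relations are missed. I would do this by checking that $N$ is normal --- it is stable under conjugation by $\bar a,\bar b$, since $(\rho(g)-I)(2u_1,2u_2,2u_3)\in 2\Lambda$ --- and that the quotient $\Gamma/N$, generated by the images of $\bar a$, $\bar b$ and one residual lattice translation, is abelian; minimality of the commutator subgroup then forces $[\Gamma,\Gamma]=N$. Reading off the orders of these generators in $\Gamma/N$ yields the explicit finite abelian group, thereby describing the torsion of $\text{Pic}(X)$ and completing the proof.
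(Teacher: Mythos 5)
Your strategy is correct, but it is genuinely different from the paper's, and the comparison is instructive. The paper's entire proof is one line: it invokes Lemma \ref{Picgroup}, the equality $h^{1,1}(X)=3$, and ``the fact that $\pi_1(X)$ is abelian'', so that $\bA\text{b}(\pi_1(X))$ may be replaced by $\pi_1(X)$ itself in the conclusion. Your computation shows that this last claim is false: $[\ol{a},\ol{b}]$ is the translation by $(-2u_1,-2u_2,2u_3)$, a nonzero vector of $\Lambda$, so $\Gamma=\pi_1(X)$ is not abelian. (This is forced for any GHM: an abelian torsion-free crystallographic group coincides with its translation lattice, and the quotient would then be a torus.) In particular the theorem as literally stated cannot hold --- $\pi_1(X)$ is infinite and non-abelian, whereas the torsion of $\textup{Pic}$ of a Calabi--Yau threefold is the finite group $\bA\text{b}(\pi_1(X))$ --- and your reading, which takes the symbol $\pi_1(X)$ in the statement to mean its abelianization, is the only tenable one. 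Under that reading, your first paragraph (Lemma \ref{Picgroup} together with $h^{1,1}(X)=3$ from \eqref{Hdg of X}) already completes the proof; what the paper's argument buys (brevity) is paid for with an incorrect assertion, while your longer route is self-contained and exposes the error.

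The rest of your proposal, which makes the torsion explicit in the spirit of Theorem \ref{commutator group}, is sound: $N=\langle 2\Lambda,(2u_1,2u_2,2u_3)\rangle$ is normal, every commutator of generators of $\Gamma$ lies in $N$ (your list: $[\ol{g},\lambda]=(\rho(g)-I)t\in 2\Lambda$ and $[\ol{a},\ol{b}]\equiv(2u_1,2u_2,2u_3)$ mod $2\Lambda$), so $\Gamma/N$ is abelian and minimality gives $[\Gamma,\Gamma]=N$, the reverse inclusion being exactly your displayed commutators. One slip: $\Gamma/N$ is generated by the images of $\ol{a}$, $\ol{b}$ and of the \emph{whole} lattice, and $\Lambda/N\simeq(\bZ/2\bZ)^5$, so five residual lattice classes survive, not one. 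Finishing the computation: $\ol{a}^{\,2}$ and $\ol{b}^{\,2}$ are the translations by $(0,0,2u_3)$ and $(0,2u_2,0)$, whose classes in $\Lambda/N$ are nonzero and independent, whence $\bA\text{b}(\pi_1(X))\simeq \bZ/4\bZ\times\bZ/4\bZ\times(\bZ/2\bZ)^3$, a group of order $2^7$. This explicit finite group is what the torsion summand of $\textup{Pic}(X)$ actually is, and it is what a corrected statement of the theorem should display.
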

\begin{proof}
It follows by \eqref{Picgroup}, $h^{1,1}(X)=3$ and the fact that $\pi_1(X)$ is abelian. 
\end{proof}

\subsection{Automorphism group and quotients of $X$}
\begin{theorem}\label{mainth7}
The automorphism group $\textup{Aut}(X)\simeq (\bZ/2\bZ)^7$ and its elements are induced by automorphisms on $A$ whose linear part is in $\langle \textup{diag}(-1,1,1)\rangle$ and the translation part is any translation of order two. 
\end{theorem}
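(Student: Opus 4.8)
The plan is to apply the general description of the automorphism group of a GHM from Corollary \ref{aut GHV}, which gives $\textup{Aut}(X)\simeq \textup{N}_{\textup{Aut}(A)}(G)/G$, and then to compute the normalizer $\textup{N}_{\textup{Aut}(A)}(G)$ explicitly. Writing an element of $\textup{Aut}(A)$ as $\alpha_A(\ul z)=M\ul z+t$ with linear part $M\in\textup{Aut}_0(A)$ and translation $t=(t_1,t_2,t_3)$, the first step is to pin down $\textup{Aut}_0(A)$. Here the hypothesis that $E_1,E_2,E_3$ are pairwise non-isogenous is used: it forces $\textup{Hom}(E_i,E_j)=0$ for $i\neq j$, so $\textup{Aut}_0(A)=\textup{Aut}(E_1)\times\textup{Aut}(E_2)\times\textup{Aut}(E_3)$, and, for $E_j$ without complex multiplication, this is the group of diagonal sign matrices $\{\textup{diag}(\epsilon_1,\epsilon_2,\epsilon_3):\epsilon_j\in\{\pm1\}\}\simeq(\bZ/2\bZ)^3$.

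Next I would determine the translation condition. Since $\rho(a),\rho(b)$ in \eqref{representation rho'} and every candidate $M$ are diagonal, they all commute; hence for $g\in\{a,b\}$ the conjugate $\alpha_A g\alpha_A^{-1}$ has the same linear part $\rho(g)$ as $g$. Because the four linear parts $\rho(e),\rho(a),\rho(b),\rho(ab)$ are pairwise distinct, $\alpha_A g\alpha_A^{-1}\in G$ can only equal $g$ itself, so the normalizer condition becomes a congruence on the translation parts modulo the lattice $\Lambda$. A direct computation shows this congruence reads $(I-\rho(g))t+(M-I)u_g\equiv 0$, where $u_a=(0,0,u_3)$ and $u_b=(u_1,u_2,0)$. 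Since $I-\rho(g)$ has entries in $\{0,2\}$ and the $u_j$ are $2$-torsion, these reduce precisely to $t_1\in E_1[2]$, $t_2\in E_2[2]$, $t_3\in E_3[2]$, with no further restriction on $M$. Thus $\textup{N}_{\textup{Aut}(A)}(G)=\{(M,t):M\ \text{a sign matrix},\ t\in A[2]\}$, a group of order $2^3\cdot 2^6=2^9$.

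The final step is to read off the group structure. For $(M,t)$ in the normalizer one computes $(M,t)^2=(I,(M+I)t)$, and since $M+I$ has even entries and $t\in A[2]$ this is the identity; likewise the commutator of two such elements is trivial because $(M-I)t'\equiv 0$ for $t'\in A[2]$. Hence $\textup{N}_{\textup{Aut}(A)}(G)$ is elementary abelian of order $2^9$, and therefore so is the quotient $\textup{Aut}(X)=\textup{N}_{\textup{Aut}(A)}(G)/G$, which has order $2^9/2^2=2^7$; this yields $\textup{Aut}(X)\simeq(\bZ/2\bZ)^7$. For the description of the elements, note that the linear parts of $G$ are exactly the sign matrices of determinant $+1$ (an even number of $-1$'s), forming an index-two subgroup of $(\bZ/2\bZ)^3$; modulo $G$ every class of linear parts is therefore represented by $I$ or by $\textup{diag}(-1,1,1)$, i.e. by an element of $\langle\textup{diag}(-1,1,1)\rangle$, while the translation part may be taken to be an arbitrary point of order two. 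Checking that the resulting $2\cdot|A[2]|=2^7$ representatives are pairwise inequivalent modulo $G$ confirms they exhaust $\textup{Aut}(X)$.

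The main obstacle I anticipate is the normalizer computation together with the determination of $\textup{Aut}_0(A)$: one must be sure that the non-isogeny hypothesis (and the absence of extra automorphisms of the factors) really forces the linear parts to be sign matrices, otherwise, as in the $\fD_4$ case treated in Proposition \ref{Aut(X)}, complex multiplication on some $E_j$ would enlarge the normalizer; and one must handle the translation congruences modulo $\Lambda$ carefully to confirm that no constraint beyond $t\in A[2]$ survives and that conjugation never carries a generator to a different element of $G$.
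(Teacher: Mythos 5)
Your proposal is correct and follows essentially the same route as the paper's own proof: reduce to $\textup{N}_{\textup{Aut}(A)}(G)/G$ via Corollary \ref{aut GHV}, use pairwise non-isogeny to diagonalize the linear part, extract the congruences forcing $2$-torsion translations, and count cosets of $G$ to obtain $(\bZ/2\bZ)^7$ with the stated representatives. Your write-up is in fact more explicit than the paper's, which simply asserts that the normalizer condition yields $\eta_j(\alpha_A)\in\langle -1\rangle$ and $t_j\in E_j[2]$.

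One remark: the obstacle you flag at the end is genuine, and it afflicts the paper's proof as well, since pairwise non-isogeny does not exclude complex multiplication on an individual factor. If, for instance, $E_1=\bC/\bZ[i]$ and the construction uses $u_1=\frac{1+i}{2}$, then $(i-1)u_1=-1\equiv 0$ in $E_1$, so your congruence $(I-\rho(g))t+(M-I)u_g\equiv 0$ is satisfied by $M=\textup{diag}(i,1,1)$, $t=0$; this element normalizes $G$, and since its square $\textup{diag}(-1,1,1)$ does not lie in $G$, it induces an automorphism of $X$ of order $4$, contradicting the claimed structure $(\bZ/2\bZ)^7$. Hence the restriction to sign matrices really does require a no-CM (or genericity) hypothesis, parallel to the assumption $\textup{End}_{\bQ}(E)\neq\bQ(\zeta_6)$ imposed in Proposition \ref{Aut(X)} for the $\fD_4$ case; your proof makes this assumption explicit, whereas the paper's proof leaves it implicit.
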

\begin{proof}
By Corollary \ref{aut GHV} we have $\text{Aut}(X)\simeq \dfrac{\text{N}_{\text{Aut}(A)}(G)}{G}$. Let $\alpha_{A}\in \text{Aut}(A)$: we write $\alpha_{A}(\ul{z})=\eta(\alpha_A)(z)+t_{\alpha_A}$ where $\eta$ is its representation and $t_{\alpha_A}$ is the translation by the point $(t_1,t_2,t_3)$. The condition $\alpha_A\in \text{N}_{\text{Aut}(A)}(G)$ leads to:
 \begin{center}
     $\eta=\eta_1\oplus \eta_2 \oplus \eta_3$ $\quad$ $\eta_j(\alpha_A)\in \langle -1\rangle$ $\quad$ $t_j\in E_j[2]. $
 \end{center}
We obtain that $\textup{Aut}(X)\simeq (\bZ/2\bZ)^7$. It is easy to check that every $\alpha_{A}\in \text{N}_{\text{Aut}(A)}(G)$ such that $\text{det}[\eta(\alpha_A)]=1$ but $\eta(\alpha_A)\not=id$ differs from any translation of order two by an element in $G$. 
Let $\alpha_{1},\alpha_2,\alpha_3, \alpha_4\in \text{N}_{\text{Aut}(A)}(G)$ as follows:
\begin{align*}
	& \alpha_1(\ul{z})=\text{diag}(-1,1,1) +t
	&& \alpha_2(\ul{z})=\text{diag}(1,-1,1) +t_a+t\\
	& \alpha_3(\ul{z})=\text{diag}(1,1,-1) +t_b+t 
	&& \alpha_4(\ul{z})=\text{diag}(-1,-1,-1) +t_{ab}+t 
\end{align*}
with $t$ any translation of order two on $A$ and $t_a$,$t_b$, $t_{ab}$ are the translation parts of $a, b, ab$ respectively.
One can check that $\alpha_2$, $\alpha_3$ and $\alpha_4$ differ from $\alpha_1$ by an element in $G$. 
Therefore we find the conditions as in the statement.
\end{proof}

\begin{remark}\label{fixed-locus-alpha_X}
Let $\alpha_X\in \textup{Aut}(X)$ be induced by $\alpha_A\in \text{Aut}(A)$.
\begin{itemize} 
    \item Assume that $\alpha_A$ is a translation by the point $(t_1,t_2,t_3)\in A$ with $t_i\in E_i[2]$. An easy computation shows that the fixed locus is a finite set of elliptic curves
    given by $\bigcup\limits _{id\not=g\in G}\pi(\text{Fix}(\alpha_Ag))$. In particular,
    \begin{align*}
        & \pi(\text{Fix}(\alpha_Aa))=\pi\biggl( \{(p,q,l)\in A \mid 2p=t_1, 2q=t_2, l\in E_3\}\biggr)\subset\text{Fix}(\alpha_X) \Leftrightarrow t_3=u_3\\
        &\pi(\text{Fix}(\alpha_Ab))=\pi\biggl( \{(p,q,l)\in A \mid 2p=t_1+u_1, 2l=t_3, q\in E_2\}\biggr)\subset\text{Fix}(\alpha_X) \Leftrightarrow t_2=u_2\\
        &\pi(\text{Fix}(\alpha_Aab))=\pi\biggl( \{(p,q,l)\in A \mid 2q=t_2+u_2, 2l=t_3+u_3, p\in E_1\}\biggr)\subset\text{Fix}(\alpha_X) \Leftrightarrow t_1=u_1.
     \end{align*}

    \item  Assume that $\alpha_A(\ul{z})=(-z_1+t_1,z_2+t_2,z_3+t_3)$ with $t_i\in E_i[2]$.
        By explicit computations we have the followings.
        The $0$-dimensional subset in $\textup{Fx}(\alpha_X)$ is given by $\pi(\text{Fix}(\alpha_Aab)$ and it is never empty. The $2$-dimensional set in $\text{Fix}(\alpha_X)$ is given by 
        \begin{align*}
            & \pi(\text{Fix}(\alpha_A))\not=\emptyset \Leftrightarrow t_2=0, t_3=0\\
            & \pi(\text{Fix}(\alpha_Aa))\not=\emptyset \Leftrightarrow t_1=0, t_3=u_3\\
            & \pi(\text{Fix}(\alpha_Ab))\not=\emptyset \Leftrightarrow t_1=u_1, t_2=u_2
        \end{align*}   
        and it consists of a finite numbers of abelian surfaces.
\end{itemize}
\end{remark}

\begin{theorem}\label{mainth8}
Let $X\in \cF^A_{(\bZ/2\bZ)^2}$ and $\Upsilon\le\text{Aut}(X)$. Let $\beta: Y\rightarrow X/\Upsilon$ be the blow up of the singular locus of $X/\Upsilon$. The followings hold.
\begin{romanenumerate}

\item If $\Upsilon$ preserves the volume form of $X$, $\beta$ is a crepant resolution and $Y$ is a Calabi-Yau $3$-fold. In particular, there are exactly $3^3-1$ automorphism $(\alpha_j)_X$ which acts freely on $X$. They are induced by the translations $\alpha_j\in \textup{Aut}(A)$ by the point $(t_1, t_2, t_3)$ such that $u_i\not=t_i \in E_i[2]$ and $X/(\alpha_j)_X$ belong to $\cF^A_{(\bZ/2\bZ)^2}$.

\item If $\Upsilon$ does not  preserve the volume form of $X$, we have the following cases.
\begin{romanenumerate}
\item [(a)] If there exists at least one $\alpha_X \in \Upsilon$ such that fixes surfaces on $X$ then $Y$ has negative Kodaira dimension.
\item  [(b)] Otherwise, $Y$ has trivial Kodaira dimension and its canonical bundle is not trivial.
\end{romanenumerate}

\end{romanenumerate}
\end{theorem}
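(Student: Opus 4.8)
The plan is to treat the two cases separately, reusing the involution analysis of Section~\ref{sec: CY 3fold} exactly as in the proof of Theorem~\ref{mainth2}. By Theorem~\ref{mainth7} we have $\Upsilon\simeq(\bZ/2\bZ)^m$, and a representative preserves $\omega_X$ precisely when its linear part has determinant $1$, i.e. when it is a pure order-two translation.

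\textbf{Part (i).} Assuming $\Upsilon$ preserves $\omega_X$, every $\upsilon\in\Upsilon$ is induced by an order-two translation. Since $\Upsilon$ is abelian I would split $X/\Upsilon$ into a tower of order-two quotients and apply Proposition~\ref{quotientCYthreefold} at each step: each such involution is symplectic, its fixed locus is a disjoint union of smooth curves, the blow-up of those curves is crepant, and the quotient is birational to a Calabi--Yau $3$-fold on which the residual $(\bZ/2\bZ)^{m-1}$ still acts preserving the volume form. Iterating, $\beta\colon Y\to X/\Upsilon$ is a crepant resolution with $Y$ Calabi--Yau. For the free automorphisms I would use \eqref{fix-equation}: a translation $\alpha_{A}(\ul z)=\ul z+(t_1,t_2,t_3)$ with $t_i\in E_i[2]$ acts freely iff $\alpha_{A}(\ul z)=g(\ul z)$ has no solution for each $g\in G=\{\mathrm{id},a,b,ab\}$, and a direct computation yields the four conditions $t\neq 0$, $t_3\neq u_3$, $t_2\neq u_2$, $t_1\neq u_1$. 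Hence the free ones are exactly the $t$ with $t_i\in E_i[2]\setminus\{u_i\}$ (three choices each) other than $t=0$, giving $3^3-1$ automorphisms, all distinct modulo $G$ since $G$ contains no translation.

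\textbf{Free quotients stay in the family.} For such an $\alpha_j$ I would observe that $X/(\alpha_j)_X=A/\langle G,\alpha_j\rangle$, where $\langle G,\alpha_j\rangle$ acts freely on $A$ (the freeness conditions above say exactly that every $g\alpha_j$ is fixed-point free). Setting $A_j:=A/\langle\alpha_j\rangle$, again an abelian $3$-fold, the induced group $\overline{G}:=\langle G,\alpha_j\rangle/\langle\alpha_j\rangle\simeq(\bZ/2\bZ)^2$ acts freely on $A_j$ and contains no translation, since its elements inherit the non-trivial linear parts of $a,b,ab$. Thus $X/(\alpha_j)_X$ is a Calabi--Yau $3$-fold of type $A$ with a group of order $4$, so by the Oguiso--Sakurai classification \cite[Theorem 0.1]{OS} it lies in $\cF^A_{(\bZ/2\bZ)^2}$.

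\textbf{Part (ii).} Now $\Upsilon$ contains a non-symplectic involution. Writing $\Upsilon^+\leq\Upsilon$ for the index-two subgroup preserving $\omega_X$ and choosing a non-symplectic $\alpha_X$ with $\Upsilon=\Upsilon^+\oplus\langle\alpha_X\rangle$ (possible over $\mathbb{F}_2$), I would factor $X/\Upsilon=(X/\Upsilon^+)/\overline{\alpha_X}$. By Part (i), $X/\Upsilon^+$ is birational to a Calabi--Yau $3$-fold $Y^+$, and $\overline{\alpha_X}$ descends to a biregular involution of $Y^+$ (it commutes with $\Upsilon^+$ and lifts through the canonical resolution) which does not preserve $\omega_{Y^+}$, as the determinant of its linear part is $-1$. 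I would then apply Proposition~\ref{kodairadimension} to $(Y^+,\overline{\alpha_X})$. By Remark~\ref{fix-nonsymplectic} its fixed locus always contains isolated points and contains surfaces precisely when one of $\alpha_{A},\alpha_{A}a,\alpha_{A}b$ has fixed points; moreover only non-symplectic elements can fix surfaces (a symplectic involution has linear part $\mathrm{diag}(-1,-1,1)$ and fixes only curves). In case (a) I would take $\alpha_X$ to be the surface-fixing element, so $\overline{\alpha_X}$ fixes a surface on $Y^+$ and Proposition~\ref{kodairadimension}(i) gives $k(Y)=-\infty$; in case (b) no element of $\Upsilon$ fixes a surface, so each such $\overline{\alpha_X}$ fixes only points and Proposition~\ref{kodairadimension}(ii) gives $k(Y)=0$ with $\cK_Y$ non-trivial. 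This matches the ramification computation: writing $p\colon X\to X/\Upsilon$ one has $p^\ast\cK_{X/\Upsilon}=-R$ with $R$ the reduced ramification divisor, which is non-zero exactly in case (a), forcing $\cK_{X/\Upsilon}$ to be non-pseudoeffective.

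\textbf{Main obstacle.} The delicate point is the passage from the singular quotient to the smooth model $Y$. I must verify that the fixed surface (resp. the isolated fixed points) of the descended involution $\overline{\alpha_X}$ genuinely persists on $Y^+$ and is not absorbed into the exceptional locus of the crepant resolution of $X/\Upsilon^+$, and that the quotient singularities of $X/\Upsilon$ are canonical: the isolated ones are of type $\tfrac12(1,1,1)$, hence terminal, and the curve singularities are transversal $A_1$, hence crepant, so that $k(Y)=k(X/\Upsilon)$ and the plurigenus computation is legitimate. Controlling the interaction between the fixed loci of the symplectic part $\Upsilon^+$ and of $\overline{\alpha_X}$ at points with larger stabilizer is where the bookkeeping is heaviest.
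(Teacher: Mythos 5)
Your part (i) is correct and is essentially the paper's own argument: the tower of order-two quotients via Proposition \ref{quotientCYthreefold}, the freeness conditions $t\neq 0$, $t_i\neq u_i$ giving the count $3^3-1$, and the identification of $X/(\alpha_j)_X$ as a member of $\cF^A_{(\bZ/2\bZ)^2}$ through the abelian threefold $A/\langle\alpha_j\rangle$. Your splitting $\Upsilon=\Upsilon^+\oplus\langle\alpha_X\rangle$ in part (ii) is also the paper's decomposition $\Upsilon=\Upsilon_1\times\Upsilon_2$. The gaps are in how you finish part (ii), and both stem from the word \emph{only} in the hypotheses of Proposition \ref{kodairadimension}. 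In case (a) you descend $\alpha_X$ to an involution $\overline{\alpha_X}$ of the Calabi--Yau model $Y^+$ of $X/\Upsilon^+$ and invoke Proposition \ref{kodairadimension}(i); but that statement assumes the involution fixes \emph{only} surfaces, whereas by Remark \ref{fix-nonsymplectic} --- which you yourself quote --- every non-symplectic element of $\textup{Aut}(X)$ also fixes isolated points (those coming from $\alpha_A ab$). So the hypothesis of (i) is never satisfied by $\overline{\alpha_X}$ and the citation is invalid as it stands. The paper avoids this by reversing the order of the quotients: it passes to the lift $\alpha_A$ on the abelian threefold $A$, which (unlike $\alpha_X$ on $X$) fixes only divisors, runs the Riemann--Hurwitz argument of the \emph{proof} of Proposition \ref{kodairadimension}(i) there to get $k(A/\alpha_A)=-\infty$, and then concludes $k(Y)=-\infty$ because Kodaira dimension cannot increase under finite quotients and is a birational invariant. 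Your closing ramification remark ($p^\ast\cK_{X/\Upsilon}=-R$) is the right idea and could be promoted to an actual proof, but as written your case (a) rests on a proposition whose hypothesis fails.

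In case (b) the gap is more serious. You assert that ``each such $\overline{\alpha_X}$ fixes only points'' on $Y^+$ because no element of $\Upsilon$ fixes a surface on $X$. That settles the question on $X$, but not on the crepant resolution: a priori $\overline{\alpha_X}$ could pointwise fix one of the exceptional divisors introduced by resolving $X/\Upsilon^+$, in which case Proposition \ref{kodairadimension}(ii) would again not apply. This is exactly where the paper spends most of its effort: it shows that if the lifted involution fixed an exceptional divisor, which is a $\bP^1$-bundle over a curve $C$, then $\alpha_X$ would have to fix the curve $C$ on $X$; since a non-symplectic involution of a Calabi--Yau threefold fixes only points and surfaces (Proposition \ref{kodairadimension}), $C$ would have to lie in a surface fixed by $\alpha_X$, contradicting the case hypothesis (the paper also rules out new isolated fixed points on the fibers of these $\bP^1$-bundles, which matters for its Hodge-number computation). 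You explicitly flag this verification as your ``main obstacle'' and leave it open, so the decisive step of case (b) is missing from your proposal.
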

\begin{proof}
\begin{romanenumerate}

\item  According to Theorem \ref{mainth7} $\Upsilon\simeq (\bZ/2\bZ)^m$ with $1\le m \le 6$: using similar argument to the one in the proof of Theorem \ref{mainth2}, we conclude that $\beta : Y \longrightarrow X/\Upsilon$ is a crepant resolution and $Y$ is a Calabi-Yau $3$-fold.
By Proposition \ref{quotientCYthreefold} and \ref{kodairadimension} since $\Upsilon$ preserves the volume form of $X$, it can act freely on $X$. One can easily show that the $\alpha_X \in \text{Aut}(X)$ acting freely on $X$ are the one induced by $\alpha_A$ translation on $A$ by the point $(t_1, t_2, t_3)$ with $t_i\not= u_i$. It is easy to see that Calabi-Yau $3$-fold $Y=A/\alpha_X$ can be obtain as $\dfrac{(A/\langle\alpha_A\rangle)}{(\langle \alpha_A, G\rangle/\langle\alpha_A\rangle)}$: since $(A/\langle\alpha_A\rangle)$ is an abelian $3$-fold with an action of $(\langle \alpha_A, G\rangle/\langle\alpha_A\rangle)\simeq (\bZ/2\bZ)^2$ and does not contain any translation, we have $Y\in \cF^A_{(\bZ/2\bZ)^2}$.

\item In this case there exists at least one element in $\Upsilon$ which does not preserve the volume form of $X$. Moreover whenever we consider two elements in $\Upsilon$ which does not preserve the volume form of $X$, their composition defines an element which preserves the volume form of $X$. Therefore we can split $\Upsilon$ in the direct product of two groups $\Upsilon_1\times \Upsilon_2$ where $\Upsilon_1\simeq \bZ/2\bZ=\langle \alpha_X \rangle$ whose generator does not preserve the volume form of $X$ and $\Upsilon_2 \simeq (\bZ/2\bZ)^{k-1}$ which preserves the volume form of $X$ where $\left \vert \Upsilon \right\rvert=2^k$ for some $k=1,\ldots,7$.
\begin{enumerate}
    \item[a.] Since there exist at least one elements $\alpha_X$ that fixes surfaces (codimension $1$ submanifolds), according to Proposition \ref{kodairadimension} part (ii) $X/\langle \alpha_X \rangle $ has negative Kodaira dimension. Since the Kodaira dimension cannot increase under quotients and is a birational invariant then $X/\Upsilon$ admits a desingularization $Y$ such that $k(Y)=-\infty$ and $h^{j,0}(Y)=0$.

\item[b.] Since $\Upsilon$ does not fix surfaces we have that $\Upsilon_1$ fixes only isolated points. Moreover by Proposition \ref{quotientCYthreefold} $\Upsilon_2$ either acts freely or fixes curves.
We first consider the quotient by $\Upsilon_2$ which, up to a desingularization, produces a Calabi-Yau $3$-fold $Z$, by part (i). The action of $ \alpha_X $ lifts to an action of $\alpha_Z$ on $Z$, since $\Upsilon$ is abelian.
Moreover, $\alpha_Z$ does not preserve $\omega_Z$ since $ \alpha_X$ does not preserve $\omega_X$. By Proposition \ref{kodairadimension}, $\alpha_Z$ fixes points or divisors or both of them on $Z$. We prove that $\alpha_Z$ fixes only points. Since the resolution $\gamma\colon Z \rightarrow X/\Upsilon_2$ is an isomorphism outside the blown up locus and $\alpha_X$ fixes only points on $X$, the only divisors that (perhaps) $\alpha_Z$ could fix are the exceptional divisors. 
If $\alpha_Z$ fixes an exceptional divisor introduced by $\gamma$ on $Z$, which is a $\bP^1$-bundle over a curve $C$, then $\alpha_X$ should fix a curve $C$ on $X$: this can happen if and only if $C$ is contained in surfaces fixed by $\alpha_X$, indeed $\alpha_X$ does not fix curves on $X$ by Proposition \ref{kodairadimension}. By hypothesis $\alpha_X$ does not fixed surfaces on $X$, therefore $\alpha_Z$ fixes only points on $Z$. Thus we have a Calabi-Yau threefolds $Z$ with an involution that does not preserves its volume forms and fixes only points, by Proposition \ref{kodairadimension} we have $Z/\alpha_Z$ admits a desingularization $Y$ such that $k(Y)=0$ but not trivial canonical bundle. 
\end{enumerate}

\end{romanenumerate}
\end{proof}

\subsection{Hodge numbers of a desingularizations of quotients of $X\in \cF^A_{(\bZ/2\bZ)^2}$}
In this section we compute the Hodge numbers of $Y$ as in Theorem \ref{mainth8}.

\begin{proposition}\label{hdgnumbers}
Let $X\in \cF^A_{(\bZ/2\bZ)^2}$ and $\Upsilon\le\textup{Aut}(X)$. 
\begin{romanenumerate}
\item If $\Upsilon$ preserves the volume form of $X$, then there exist $\beta\colon Y\longrightarrow X/\Upsilon$ a crepant resolution such that
$$ h^{1,1}(Y)=h^{2,1}(Y)=3+\sum\limits_{id\not=\alpha_X\in \Upsilon}\left \lvert{\dfrac{\textup{Fix}(\alpha_X)}{\Upsilon}}\right\rvert. $$ 
In particular $e(Y)=0$.
\item If $\Upsilon=\Upsilon_1\times \Upsilon_2$ (as in proof of Theorem \ref{mainth8}) does not preserve the volume form of $X$, there exist a resolution of singularities $\beta\colon Y\longrightarrow X/\Upsilon$
such that:
\begin{align*}
     &h^{1,1}(Y) = 3+\sum\limits_{id\not=\upsilon\in \Upsilon_2}\left\lvert{\dfrac{\textup{Fix}(\upsilon)}{\Upsilon}}\right\rvert+16\\
     &h^{2,1}(Y) =
         \left \lvert\{F^i_\upsilon/\Upsilon \text{ elliptic curve }\}_{id\not=\upsilon\in \Upsilon_2} \right \rvert +16 
\end{align*}
where $F^i_\upsilon$ is a curved fixed by $\upsilon\in \Upsilon_2$ on $X$.
Furthermore, $\beta$ is the blowing up of $\textup{Sing}(X/\Upsilon)$ which introduces exactly one irreducible divisor on each irreducible submanifold blown up in $X/\Upsilon$.
\end{romanenumerate}
\end{proposition}

\begin{proof}
\begin{romanenumerate}
\item If $\Upsilon$ preserves the volume form of $X$, by Theorem \ref{mainth8} $\beta$ is a crepant resolution and so we use the orbifold cohomology formula \eqref{orbifoldscohomology} to compute the Hodge numbers of $Y$. 
We have $\text{Fix}(\Upsilon)=\coprod\limits_{\upsilon\in \Upsilon_A}\coprod\limits_{g\in G} \pi(\text{Fix}(\upsilon g))$ where $ \Upsilon_A$ is a lift of $\Upsilon$ to $A$ and $\pi: A\longrightarrow X$. According to Remark \ref{fixed-locus-alpha_X}, $\text{Fix}(\upsilon g)$ consists of a finite number of elliptic curves. Moreover, $G$ acts on each 
$\text{Fix}(\upsilon g)$ and since $G$ acts freely on $A$ it can either identify the elliptic curves in $\text{Fix}(\upsilon g)$ or act on them as a translation. In any case, $\text{Fix}(\Upsilon)$ consists of a finite numbers of elliptic curves.
We recall that each $\upsilon \in \Upsilon$ is induced by a (order two) translation on $A$.
Applying the same proof of Proposition \ref{Hdg of quotient of X} we obtain:
$$ h^{1,1}(Y)=h^{2,1}(Y)=3+\sum\limits_{id\not=\alpha_X\in \Upsilon}\left\lvert{\dfrac{\textup{Fix}(\alpha_X)}{\Upsilon}}\right\rvert. $$ 
In particular $e(Y)=0$.

\item We have that $\Upsilon$ does no preserve the volume form of $X$. Following the proof of Theorem \ref{mainth8}: we can write $\Upsilon$ as the direct product of the groups $\Upsilon_1\times \Upsilon_2$ where $\Upsilon_1=\langle \alpha_X \rangle$ is cyclic of order $2$ which does not preserve the volume form of $X$ and $\Upsilon_2 \simeq (\bZ/2\bZ)^{k-1}$ preserves the volume form of $X$ where $\left\lvert{\Upsilon}\right\rvert=2^k$ for $k=1,\dots,7$. We remark that the fixed locus of 
$\Upsilon_2$, if not empty, consists of a finite number of elliptic curves and the one of
$\Upsilon_1$ of finite numbers of isolated points and (possibly) smooth surfaces, see Remark \ref{fixed-locus-alpha_X}.
We consider the following commutative diagram:
\begin{equation}\label{diagram}
\begin{tikzcd}
& & A \arrow[d, "\pi" '] \\
& & X \arrow[d, "f" '] \\
{\overline{\Upsilon_1}}\Circlearrowright W \arrow[r, "\delta"] \arrow[d, "2:1" ', "q" ] & {\widetilde{\Upsilon_1}}\Circlearrowright Z \arrow[r,"\gamma"] & X/\Upsilon_2 \Circlearrowleft \Upsilon/\Upsilon_2  \arrow[d," 2:1", "p" '] \\
Y=W/ \overline{\Upsilon_1} \arrow[rr, "\beta" ']& & X/\Upsilon 
\end{tikzcd}
\end{equation}
Here $\gamma\colon Z \longrightarrow X/\Upsilon_2$ is the blow up of each singular curves in $X/\Upsilon_2$ and $Z$ a Calabi-Yau $3$-fold, see Theorem \ref{mainth8} part (i). Since $\Upsilon$ is abelian 
then $\Upsilon_1$ preserves the fixed locus of $\Upsilon_2$. Therefore, the action of $\Upsilon_1\simeq \Upsilon/\Upsilon_2$ on $X/\Upsilon_2$ 
extends to an action of $\widetilde{\Upsilon_1}$ on $Z$. According to Remark \ref{fixed-locus-alpha_X}, the fixed locus of $\widetilde{\Upsilon_1}$ consists of isolated fixed points and (possibly) smooth surfaces: $\delta$ blow ups the $0$-dimensional subset $\Pi$ of $\text{Fix}(\widetilde{\Upsilon_1})$. In particular, $\widetilde{\Upsilon_1}$ lifts to an action  of $\ol{\Upsilon_1}$ on $W$. 
Finally, $\beta$ is a composition of birational maps making the diagram commutative.\\
We compute the Hodge numbers of $Y$ by consi\-dering the morphisms $q$ and $(\gamma\circ \delta)$. We have $H^{i,j}(Y)\simeq H^{i,j}(W)^{\overline{\Upsilon_1}}$ and by using the formula \cite[Theorem 7.31]{V} to compute $H^{i,j}(W)$, we lead to the following:
\begin{equation}\label{com Y}
\begin{aligned} 
H^{i,j}(Y)&\simeq H^{i,j}(W)^{\overline{\Upsilon_1}}\simeq [H^{i,j}(Z) \oplus H^{0,0}(\Pi)]^{\widetilde{\Upsilon_1}}= H^{i,j}(Z) ^{\widetilde{\Upsilon_1}} \oplus  H^{0,0}(\Pi) .
\end{aligned}
\end{equation}
where the last equality follows since $\widetilde{\Upsilon_1}$ fixes $\Pi$.
Since $\gamma$ is a crepant resolution we use the formula \eqref{orbifoldscohomology} to describe the cohomology of $Z$:
\begin{equation}\label{com Z}
\begin{split}
H^{i,j}(Z)& \simeq H^{i,j}(X)^{\Upsilon_2} \oplus \bigoplus\limits_{id\not=\upsilon\in \Upsilon_2} \bigoplus\limits_{i}H^{i-1,j-1}(F^i_\upsilon/\Upsilon_2)\\
&=H^{i,j}(X) \oplus \bigoplus\limits_{id\not=\upsilon\in \Upsilon_2} \bigoplus\limits_{i}H^{i-1,j-1}(F^i_\upsilon/\Upsilon_2)
\end{split}
\end{equation}
where $F^i_\upsilon$ is a fixed elliptic curve by $\upsilon\in \Upsilon_2$ on $X$ and the last equality follows since $\Upsilon_2$ preserves the cohomology of $X$. By substituting \eqref{com Z} in \eqref{com Y} we obtain:
\begin{equation}\label{final_hdg_Y}
H^{i,j}(Y) \simeq  [H^{i,j}(X) \oplus \bigoplus\limits_{id\not=\upsilon\in \Upsilon_2} \bigoplus\limits_{i}H^{i-1,j-1}(F^i_\upsilon/\Upsilon_2)] ^{\Upsilon_1} \oplus  H^{0,0}(\Pi) .
\end{equation}
Using \eqref{Hdg of X} and the expression of the generator of $\Upsilon_1$ we obtain the followings: $H^{1,1}(X)^{\Upsilon_1}\simeq H^{1,1}(X)$ and $H^{2,1}(X)^{\Upsilon_1}=0$. 
If $\Upsilon_1\simeq \Upsilon/\Upsilon_2$ acts on an elliptic curve $F^i_\upsilon/\Upsilon_2$, the following situations situations can appear: it fixes the curve, it acts by translation on it or it is the hyperelliptic involution on it.
In the first two cases, $F^i_\upsilon/\Upsilon$ is an elliptic curve. In the second case, $F^i_\upsilon/\Upsilon$ is a rational curve.
If $\Upsilon_1\simeq \Upsilon/\Upsilon_2$ does not act on $F^i_\upsilon/\Upsilon_2$, we have identifications in the quotient $X/\Upsilon$ and $F^i_\upsilon/\Upsilon$ is still an elliptic curve. Therefore, by applying to \eqref{final_hdg_Y} we obtain:
\begin{align*}
     &h^{1,1}(Y) = 3+ \sum\limits_{id\not=\upsilon\in \Upsilon_2}\left\lvert{\dfrac{\textup{Fix}(\upsilon)}{\Upsilon}}\right\rvert+\left \lvert{\Pi}\right\rvert\\
     &h^{2,1}(Y) =
         \left \lvert\{F^i_\upsilon/\Upsilon \text{ elliptic curve }\}_{id\not=\upsilon\in \Upsilon_2} \right \rvert +\left \lvert{\Pi}\right\rvert
\end{align*}

We compute $\left \lvert{\Pi}\right\rvert$. We prove that $\Pi$ is isomorphic, via $\gamma$, to the $0$-dimensional subset fixed by $\Upsilon_1$ on $X/\Upsilon_2$.
This is equivalent
to prove that $\widetilde{\Upsilon_1}$ does not fix isolated points on the exceptional divisors introduced by $\gamma$. Since the exceptional divisors introduced by $\gamma$ on $Z$ are $\bP^1$-bundles over the curves in $\textup{Sing}(X/\Upsilon_2)$, we have to check that whenever $\widetilde{\Upsilon_1}$ acts on these $\bP^1$-bundles it does not fix isolated points on the fibers of these bundles.
We assume that $\widetilde{\Upsilon_1}$ acts on the $\bP^1$-bundles  introduced by $\gamma$. Let $E$ be one of these $\bP^1$-bundle over a curve $C=f(C')$ in $\text{Sing}(X/\Upsilon_2)$ which is the mage of a curve fixed $C'$ by $\upsilon\in \Upsilon_2$. Since $\widetilde{\Upsilon_1}$ acts on $E$, then $\alpha_X$ must acts on $C'$. We recall that $C'$ is an elliptic curve, hence $\alpha_X$ can act either as a translation or as the hyperelliptic involution. In the first case we obtain that  $\widetilde{\Upsilon_1}$ acts on $E$ but without fixing points on its fibers.
In the second case, $\alpha_X$ fixes four points on $C'$: we prove that these are not isolated fixed points by $\alpha_X$ on $X$ but that they lie on surfaces fixed by $\alpha_X\upsilon$ on $X$.
Let $X\simeq \bC^3_{(z_1,z_2,z_3)}$ and we assume that $C'$ over a point $P=(z_1,z_2)\in X$, \emph{i.e.} it has free variable on $z_3$: $\alpha_X$ acts on $C'$ if and only if $\alpha_X$ acts on $P=(z_1,z_2)\in X$ in the same way of $\upsilon$; therefore $\upsilon\alpha_X$ acts as the identity on $P=(z_1,z_2)$ and so it fixes surfaces on $X$. In particular, the four points on $C'$ fixed by $\alpha_X$ are the intersection points between the surfaces fixed by $\alpha_X\upsilon$ and $C'$. Thus, $\widetilde{\Upsilon_1}$ acts on $E$ by fixing four fibers.
This prove that $\Pi$ is isomorphic, via $\gamma$, to the $0$-dimensional subset fixed by $\Upsilon_1$ on $X/\Upsilon_2$.
By using Remark \ref{fixed-locus-alpha_X} we lead to the following description:
 \begin{equation}\label{eq-pi-points}
      \Pi=(\pi\circ f)\bigl(\bigsqcup_{\upsilon\in \Upsilon_2} I_\upsilon) \qquad I_\upsilon=\{p\in A\mid \alpha_A\upsilon ab(p)=p\}.
 \end{equation}    
 In particular, $\left \lvert{I_\upsilon}\right\rvert$ consists of $2^6$ points for every $\upsilon\in \Upsilon_2$. Since $G$ acts freely on $A$, then $G$ permutes the points in each $I_\upsilon$ and we get \begin{equation}
     \left \lvert{\pi(\bigsqcup_{\upsilon\in \Upsilon_2} I_\upsilon)}\right\rvert =\left \lvert{\Upsilon_2}\right\rvert\dfrac{2^6}{\left \lvert{G}\right\rvert}=\left \lvert{\Upsilon_2}\right\rvert2^4.
 \end{equation}
Since $\Upsilon$ is abelian then $\Upsilon_2$ acts on each $I_\upsilon/G$. In particular each $\upsilon_2\in \Upsilon_2$ permutes the points in $I_\upsilon/G$. Indeed, otherwise we would find isolated points fixed by $\alpha_X$ on a curve fixed by $\Upsilon_2$ and this situation, as explained above, cannot happen. Therefore, we lead to
 \begin{equation}
      \left \lvert\Pi\right\rvert=\dfrac{\left \lvert{\Upsilon_2}\right\rvert2^4}{\left \lvert{\Upsilon_2}\right\rvert}=2^4.
 \end{equation}

We also observe that the validity of \eqref{eq-pi-points} implies that the diagram \eqref{diagram} commutes and so $\beta$ is the blowing up of $\textup{Sing}(X/\Upsilon)$ which introduces exactly one irreducible divisor on each irreducible submanifold blown up in $X/\Upsilon$.

\end{romanenumerate}
\end{proof}

\begin{remark}
Let $X\in \cF^A_{(\bZ/2\bZ)^2}$ and $\Upsilon\le \text{Aut}(X)$. We can also compute the fundamental group of $Y$ desingularization of $X/\Upsilon$ by applying the results of Section \ref{sec: fundgroup}. In particular, in \cite[Section 1]{DW} one can find the fundamental groups and Hodge numbers of the Calabi-Yau $3$-folds $Y$ desingularizations of $X/\Upsilon$.
\end{remark}

\appendix
\section{Proof of Table \ref{table2}}\label{sec: fixlocus}

\begin{remark}\label{remark: fixed locus alphaX}
Let $\alpha(\ul{z})=(z_1+t_1, z_2+t_2, z_3+t_3)$ be a translation on $\bC^3$. Let us fix $\epsilon\in\{0,1\}$. We denote by $\ol{H}$ a lift of $H$ on $\bC^3$, we consider different $\ol{h}\in \ol{H}$ and study the equations $\alpha(\ul{z})=\ol{h}(\ul{z})$ on $\bC^3$.
In the followings, $u_1$ and $u_2$ are defined in Definition \ref{def1X}.
\begin{enumerate}

\item If $\ol{h}\in\{id, \ol{s}^2\}$: $\alpha(\ul{z})\in\{\ul{z},\ol{s}^2(\ul{z})\} \Leftrightarrow \alpha\equiv id \lor \alpha \equiv s^2 \Leftrightarrow \alpha_X=id_X.$

\item If $\ol{h}\in\{\ol{s}^{2\epsilon}\ol{r}^k\}$ for $k=1,3$:  $\alpha(\ul{z})=\ol{s}^{2\epsilon}\ol{r}^k(\ul{z})$  admit solution $\Leftrightarrow$ $t_3=\frac{1}{4}+\frac{\epsilon}{2}$.

\item If $\ol{h}\in\{\ol{s}^{2\epsilon}\ol{r}^2\}$ 
$\alpha(\ul{z})=\ol{s}^{2\epsilon}\ol{r}^2(\ul{z})$ admit solution $\Leftrightarrow$ $t_3=\dfrac{1}{2}$.

\item If $\ol{h}\in \{\ol{s}^{2\epsilon+1}\}$: $\alpha(\ul{z})\in\{\ol{s}(\ul{z}),\ol{s}^3(\ul{z})\} \Leftrightarrow t_1+t_2=\dfrac{1}{2}+\epsilon.$

\item If $\ol{h}\in \{\ol{r}^2\ol{s}^{2\epsilon+1}\}$: $\alpha(\ul{z})\in \{\ol{r}^2\ol{s}(\ul{z}),\ol{r}^2\ol{s}^3(\ul{z})\} \Leftrightarrow t_1-t_2=\dfrac{1}{2}.$

\item If $\ol{h}\in \{\ol{r}^3\ol{s}^{2\epsilon+1}\}$: $\alpha(\ul{z})\in \{\ol{r}^3\ol{s}(\ul{z}),\ol{r}^3\ol{s}^3(\ul{z})\}\Leftrightarrow t_2=u_1+\frac{\epsilon}{2}. $\item If $\ol{h}\in \{\ol{rs}^{2\epsilon+1}\}$: $\alpha(\ul{z})\in \{\ol{rs}(\ul{z}), \ol{rs}^3(\ul{z})\}  \Leftrightarrow t_1=u_2+\frac{\epsilon}{2}. $\end{enumerate}
\end{remark}

\begin{proof}[Proof of Table \ref{table2}]
Let $\alpha_X\in \text{Aut}(X)$ with $X\in \cF^A_{\fD_4}$ and $\alpha_{A'}$ its lift on $A'$ which a translation by $(t_1,t_2,t_3)$ as in Theorem \ref{mainth1}. 
According to \eqref{fix-equation},
we need to study the solutions of the equations $\alpha_{A'}(\ul{z})=h(\ul{z})$ with $h\in H$.
As evidence, we explicitly show one case.
Let us consider $t_1=t_2$ and $t_2=\frac{1}{2}$. By 
Remark \ref{remark: fixed locus alphaX} we need to consider only $\ol{h}\in\{\ol{r}^2,\ol{s}^2\ol{r}^2\}$: by explicit computations we find $\text{Fix}(\alpha_{A'})=I^1(t_1,t_2)\coprod I^2(t_1,t_2)$ where $I^j(t_1,t_2)=\coprod \limits_{2p=t_1,2q=t_2} C^j_{p,q}$. Then, we need to study the action of $H$ on the fixed locus. To do this we need to choose $t_1$. Let $t_1=0$ and $C^1_{p,q}\in \text{Fix}(\alpha_{A'})$. Since $-p=p$ and $-q=q$ we have:
\begin{center}
    \begin{tikzcd}
    r\Circlearrowright C^1_{p,p} \arrow[r,"s"] & C^1_{p+\frac{\tau+1}{2},p+\frac{\tau}{2}} \arrow[r,"s"] &C^1_{p+\frac{1}{2},p+\frac{1}{2}} \arrow[r,"s"] &C^1_{p+\frac{\tau}{2},p+\frac{\tau+1}{2}}
    \end{tikzcd} 
    \end{center}
    \begin{center}
    \begin{tikzcd}
   C^1_{p,p+\frac{\tau}{2}} \arrow[r,"s"] \arrow[d,leftrightarrow, "r"] & C^1_{p+\frac{1}{2},p+\frac{\tau}{2}} \arrow[r,"s"]  &C^1_{p+\frac{1}{2},p+\frac{\tau+1}{2}} \arrow[r,"s"]\arrow[d,leftrightarrow,"r"]  & C^1_{p,p+\frac{\tau+1}{2}}  \\
   C^1_{p+\frac{\tau}{2},p}  \arrow[r,"s"] & C^1_{p+\frac{\tau+1}{2},p} \arrow[r,"s"] &C^1_{p+\frac{\tau+1}{2},p+\frac{1}{2}}  \arrow[r,"s"]  & C^1_{p+\frac{\tau}{2},p+\frac{1}{2}} .
    \end{tikzcd}
\end{center} 
Thus for $t_1=0$: $\pi_H(I^1(t_1,t_2))=\{ \pi_H(C^1_{0,0}), \pi_H(C^1_{\frac{\tau}{2},\frac{\tau}{2}}), \pi_H(C^1_{0,\frac{\tau}{2}})\}$.
Similar computations for $C^2_{p,q}$ lead to
$\pi_H(I^2(t_1, t_2))=\{ \pi_H(C^2_{\frac{1}{4},\frac{1}{4}}), \pi_H(C^2_{\frac{1}{4},\frac{1}{4}+\frac{\tau}{2}})\}$. 
\end{proof}

\section{Tabulation of results for quotients of $X\in \cF^A_{\fD_4}$}\label{appendix}
\noindent Let $X\in \cF^A_{\fD_4}$ and $\Upsilon\le \text{Aut}(X)$.  We denote by $\alpha_X$ and $\beta_X$ and $\gamma_X$ the generators of $\Upsilon$ (depending on the cardinality of $\Upsilon$) and, as usual, by $\alpha_{A'}$ and $\beta_{A'}$ and $\gamma_{A'}$ respectively their lifts on $A'$.
We define the following sub-lattices of $\Lambda_3=\bZ\oplus \tau' \bZ$:
$\Lambda_3':=\bZ \oplus \dfrac{\tau'}{2}\bZ $, $\Lambda_3'':=\bZ \oplus \dfrac{\tau'+1}{2}\bZ$ and $\Lambda_3'''=\dfrac{1}{2}\bZ\oplus \bZ$.
We recall  the notation $\Lambda_1=\Lambda_2=\bZ\oplus \tau \bZ$.
\begin{table}[H]
    \centering
          \scalebox{0.78}{
\begin{tabular}{|c|c|c|}
\hline \rule[-4mm]{0mm}{1cm}
$\alpha_{A'}$ & $h^{1,1}(Y)=h^{2,1}(Y)$ & $\pi_1(Y)$ \\
  \hline \rule[-4mm]{0mm}{1cm}
 $(0,0,\frac{\tau'}{2})$ & $2$ & $0\rightarrow \Lambda_1\oplus \Lambda_2\oplus\Lambda_3' \rightarrow \pi_1(Y)  \rightarrow \fD_4 \rightarrow 0$ \\
  \hline \rule[-4mm]{0mm}{1cm}
 $(0,0,\frac{\tau'+1}{2})$ & $2$ & $0\rightarrow \Lambda_1\oplus \Lambda_2\oplus\Lambda_3'' \rightarrow \pi_1(Y)  \rightarrow \fD_4 \rightarrow 0$ \\
  \hline \rule[-4mm]{0mm}{1cm}
$(0,0,\frac{1}{2})$ & $7$&  $0\rightarrow \Lambda_3'''  \rightarrow \pi_1(Y)  \rightarrow  \bZ/2\bZ\times \bZ/2\bZ \rightarrow 0 $\\
\hline \rule[-4mm]{0mm}{1cm}
$(\frac{\tau}{2},\frac{\tau}{2},\frac{1}{2})$ & $6$ & $\bZ/2\bZ$ \\
 \hline \rule[-4mm]{0mm}{1cm}
$(\frac{\tau}{2},\frac{\tau}{2},\not=\frac{1}{2})$  & $4$ & $\bZ/2\bZ\times \bZ/4\bZ$ \\
 \hline \rule[-4mm]{0mm}{1cm}
$(\frac{\tau}{2},\frac{\tau+1}{2},\frac{1}{2})$  & $8$ & $\{0\}$\\
 \hline \rule[-4mm]{0mm}{1cm}
$(\frac{\tau}{2},\frac{\tau+1}{2},\not=\frac{1}{2})$  & $6$  & $\bZ/2\bZ$ \\
 \hline \rule[-4mm]{0mm}{1cm}
 $(0,\frac{1}{2},\frac{1}{2})$  & $6$ & $\bZ/2\bZ$ \\
 \hline \rule[-4mm]{0mm}{1cm}
 $(0,\frac{1}{2},\not=\frac{1}{2})$  & $4$ & $\bZ/2\bZ\times \bZ/2\bZ$\\
 \hline 
 \end{tabular}}
\caption{Hodge numbers and fundamental group of crepant resolution $Y$ of $X/(\bZ/2\bZ)$}
\label{table3}
\end{table}

\begin{table}[H]
      \centering
      \scalebox{0.74}{
        \begin{tabular}{|c|c|c|c|}
     \hline \rule[-4mm]{0mm}{1cm}
$\alpha_{A'}$ & $\beta_{A'}$ & $h^{1,1}(Y)=h^{2,1}(Y)$ & $\pi_1(Y)$ \\
\hline \rule[-4mm]{0mm}{1cm}
$(0,0,\frac{\tau'}{2})$ & $(0,0,\frac{\tau'+1}{2})$&  $7$ & $0\rightarrow \Lambda_3 \rightarrow \pi_1(Y)  \rightarrow \bZ/2\bZ\times \bZ/2\bZ \rightarrow 0 $\\
\hline \rule[-4mm]{0mm}{1cm} 

 $(0,0,\frac{\tau'}{2})$ & $(\frac{\tau}{2},\frac{\tau}{2},\frac{1}{2})$ & $6$ & $\bZ/2\bZ$ \\
\hline \rule[-4mm]{0mm}{1cm} 

 $(0,0,\frac{\tau'}{2})$ &  $(\frac{\tau}{2},\frac{\tau}{2},0)$ &$4$ & $\bZ/2\bZ\times \bZ/4\bZ$ \\
\hline \rule[-4mm]{0mm}{1cm} 

$(0,0,\frac{\tau'}{2})$ &  $(\frac{\tau}{2},\frac{\tau+1}{2},\frac{1}{2})$ & $8$ & $\{0\}$\\
\hline \rule[-4mm]{0mm}{1cm} 

$(0,0,\frac{\tau'}{2})$ &   $(\frac{\tau}{2},\frac{\tau+1}{2},0)$ & $6$ & $\bZ/2\bZ$ \\
\hline \rule[-4mm]{0mm}{1cm} 

 $(0,0,\frac{\tau'}{2})$ & $(0,\frac{1}{2},\frac{1}{2})$ &  $6$ & $\bZ/2\bZ$\\
\hline \rule[-4mm]{0mm}{1cm} 

$(0,0,\frac{\tau'}{2})$ &  $(0,\frac{1}{2},0)$ &$4$ & $\bZ/2\bZ\times \bZ/2\bZ$ \\

 $(0,0,\frac{\tau'+1}{2})$ &  $(\frac{\tau}{2},\frac{\tau}{2},\frac{1}{2})$ &$6$ & $\bZ/2\bZ$\\
\hline \rule[-4mm]{0mm}{1cm} 

 $(0,0,\frac{\tau'+1}{2})$ & $(\frac{\tau}{2},\frac{\tau}{2},0)$ &   $4$ &  $\bZ/2\bZ\times \bZ/4\bZ$\\
\hline \rule[-4mm]{0mm}{1cm} 

$(0,0,\frac{\tau'+1}{2})$ &  $(\frac{\tau}{2},\frac{\tau+1}{2},\frac{1}{2})$ & $8$ & $\{0\}$ \\
  \hline \rule[-4mm]{0mm}{1cm} 

$(0,0,\frac{\tau'+1}{2})$ &  $(\frac{\tau}{2},\frac{\tau+1}{2},0)$ & $6$ & $\bZ/2\bZ$\\

\hline \rule[-4mm]{0mm}{1cm} 

 $(0,0,\frac{\tau'+1}{2})$ &  $(0,\frac{1}{2},\frac{1}{2})$ &  $6$ & $\bZ/2\bZ$\\
\hline \rule[-4mm]{0mm}{1cm} 

$(0,0,\frac{\tau'+1}{2})$ & $(0,\frac{1}{2},0)$ & $4$ & $\bZ/2\bZ\times \bZ/2\bZ$\\
 
\hline \rule[-4mm]{0mm}{1cm}
 $(0,0,\frac{1}{2})$ &  $(\frac{\tau}{2},\frac{\tau}{2},\frac{1}{2})$ & $12$ & $\bZ/2\bZ$\\
 \hline  \rule[-4mm]{0mm}{1cm} 

 $(0,0,\frac{1}{2})$ &  $(\frac{\tau}{2},\frac{\tau}{2},\frac{\tau'}{2})$ &
  $10$ & $\bZ/2\bZ$\\
  
\hline \rule[-4mm]{0mm}{1cm} 

$(0,0,\frac{1}{2})$ & $(0,\frac{1}{2},\frac{1}{2})$ & $12$ & $\bZ/2\bZ$\\
\hline \rule[-4mm]{0mm}{1cm} 

 $(0,0,\frac{1}{2})$ &  $(0,\frac{1}{2},\frac{\tau'}{2})$ & $10$  & $\bZ/2\bZ$\\

\hline \rule[-4mm]{0mm}{1cm} 

$(\frac{\tau}{2},\frac{\tau}{2},\frac{1}{2})$ &  $(\frac{\tau}{2},\frac{\tau+1}{2},\frac{1}{2})$ & $13$ & $\{0\}$\\
 
\hline \rule[-4mm]{0mm}{1cm} 
 $(\frac{\tau}{2},\frac{\tau}{2},\frac{1}{2})$ &  $(\frac{\tau}{2},\frac{\tau+1}{2},\frac{\tau'}{2})$ & $10$ & $\{0\}$\\

\hline \rule[-4mm]{0mm}{1cm} 
 $(\frac{\tau}{2},\frac{\tau}{2},\frac{1}{2})$ &  $(\frac{\tau}{2},\frac{\tau+1}{2},\frac{\tau'+1}{2})$ &  $10$ & $\{0\}$\\
  \hline \rule[-4mm]{0mm}{1cm} 
 $(\frac{\tau}{2},\frac{\tau}{2},\frac{1}{2})$ &  $(\frac{\tau}{2},\frac{\tau+1}{2},0)$ &  $14$ & $\{0\}$ \\
\hline \rule[-4mm]{0mm}{1cm}

 $(\frac{\tau}{2},\frac{\tau}{2},0)$ & $(\frac{\tau}{2},\frac{\tau+1}{2},\frac{1}{2})$  
 & $13$ & $\{0\}$\\
 
 \hline \rule[-4mm]{0mm}{1cm} 
  $(\frac{\tau}{2},\frac{\tau}{2},0)$ & $(\frac{\tau}{2},\frac{\tau+1}{2},\frac{\tau'}{2})$ & $8$ & $\bZ/2\bZ$\\
   
\hline \rule[-4mm]{0mm}{1cm} 
 $(\frac{\tau}{2},\frac{\tau}{2},0)$& $(\frac{\tau}{2},\frac{\tau+1}{2},\frac{1+\tau'}{2})$  & $8$ & $\bZ/2\bZ$\\
 
\hline \rule[-4mm]{0mm}{1cm} 
  $(\frac{\tau}{2},\frac{\tau}{2},0)$ & $(\frac{\tau}{2},\frac{\tau+1}{2},0)$ & $10$ & $\bZ/2\bZ$\\
  
  \hline \rule[-4mm]{0mm}{1cm}
  $(\frac{\tau}{2},\frac{\tau}{2},\frac{\tau'}{2})$& $(\frac{\tau}{2},\frac{\tau+1}{2},\frac{\tau'}{2})$  & 
  $8$ & $\bZ/2\bZ$\\

  \hline \rule[-4mm]{0mm}{1cm}
   
 $(\frac{\tau}{2},\frac{\tau}{2},\frac{\tau'}{2})$ & $(\frac{\tau}{2},\frac{\tau+1}{2},\frac{1+\tau'}{2})$   & $10$ & $\bZ/2\bZ$\\
 
 \hline \rule[-4mm]{0mm}{1cm} 
 $(\frac{\tau}{2},\frac{\tau}{2},\frac{\tau'}{2})$& $(\frac{\tau}{2},\frac{\tau+1}{2},0)$  & $6$ & $\bZ/2\bZ$\\
\hline \rule[-4mm]{0mm}{1cm}
  $(\frac{\tau}{2},\frac{\tau}{2},\frac{\tau'+1}{2})$ & $(\frac{\tau}{2},\frac{\tau+1}{2},\frac{1}{2})$ & $7$ & $\{0\}$\\
 
  \hline \rule[-4mm]{0mm}{1cm} 
  $(\frac{\tau}{2},\frac{\tau}{2},\frac{\tau'+1}{2})$& $(\frac{\tau}{2},\frac{\tau+1}{2},\frac{\tau'}{2})$ & $10$ & $\bZ/2\bZ$\\

\hline \rule[-4mm]{0mm}{1cm} 
 $(\frac{\tau}{2},\frac{\tau}{2},\frac{\tau'+1}{2})$& $(\frac{\tau}{2},\frac{\tau+1}{2},\frac{1+\tau'}{2})$  &  $8$ & $\bZ/2\bZ$\\

  \hline \rule[-4mm]{0mm}{1cm} 
 $(\frac{\tau}{2},\frac{\tau}{2},\frac{\tau'+1}{2})$& $(\frac{\tau}{2},\frac{\tau+1}{2},0)$ & $6$ & $\bZ/2\bZ$\\
\hline 
\end{tabular}}
\caption{Hodge numbers and fundamental group of crepant resolution $Y$ of $X/(\bZ/2\bZ)^2$}
\label{table10}
\end{table}

\begin{table}[H]
    \centering
          \scalebox{0.78}{
\begin{tabular}{|c|c|c|c|c|}
\hline \rule[-4mm]{0mm}{1cm}
$\alpha_{A'}$ & $\beta_{A'}$ & $\gamma_{A'}$ & $h^{1,1}(Y)=h^{2,1}(Y)$ & $\pi_1(Y)$\\
\hline \rule[-4mm]{0mm}{1cm}
$(0,0,\frac{\tau'}{2})$ & $(0,0,\frac{1}{2})$ &  $(\frac{\tau}{2},\frac{\tau}{2},\frac{1}{2})$ & $12$ & $\bZ/2\bZ$ \\

\hline \rule[-4mm]{0mm}{1cm}
$(0,0,\frac{\tau'}{2})$ & $(0,0,\frac{1}{2})$ &  $(\frac{\tau}{2},\frac{\tau+1}{2},\frac{1}{2})$ & $17$ & $\{0\}$\\

\hline \rule[-4mm]{0mm}{1cm}
$(0,0,\frac{\tau'}{2})$ & $(0,0,\frac{1}{2})$ &  $(0,\frac{1}{2},\frac{1}{2})$ & $12$ & $\bZ/2\bZ$ \\

\hline \rule[-4mm]{0mm}{1cm}
$(0,0,\frac{\tau'}{2})$ & $(\frac{\tau}{2},\frac{\tau}{2},\frac{1}{2})$ &  $(\frac{\tau}{2},\frac{\tau+1}{2},\frac{1}{2})$ 
&  $13$ & $\{0\}$ \\

\hline \rule[-4mm]{0mm}{1cm}
$(0,0,\frac{\tau'}{2})$ & $(\frac{\tau}{2},\frac{\tau}{2},\frac{1}{2})$ &  $(\frac{\tau}{2},\frac{\tau+1}{2},0)$ & $14$ & $\{0\}$ \\

\hline \rule[-4mm]{0mm}{1cm}
$(0,0,\frac{\tau'}{2})$ & $(\frac{\tau}{2},\frac{\tau}{2},0)$ &  $(\frac{\tau}{2},\frac{\tau+1}{2},\frac{1}{2})$ & $13$ & $\{0\}$\\

\hline \rule[-4mm]{0mm}{1cm}
$(0,0,\frac{\tau'}{2})$ & $(\frac{\tau}{2},\frac{\tau}{2},0)$ &  $(\frac{\tau}{2},\frac{\tau+1}{2},0)$ &  $9$ & $\bZ/2\bZ$\\
\hline \rule[-4mm]{0mm}{1cm}

$(0,0,\frac{\tau'+1}{2})$ & $(\frac{\tau}{2},\frac{\tau}{2},\frac{1}{2})$ &  $(\frac{\tau}{2},\frac{\tau+1}{2},\frac{1}{2})$ & $13$ & $\{0\}$ \\

\hline \rule[-4mm]{0mm}{1cm}
$(0,0,\frac{\tau'+1}{2})$ & $(\frac{\tau}{2},\frac{\tau}{2},\frac{1}{2})$ &  $(\frac{\tau}{2},\frac{\tau+1}{2},0)$ &  $14$ & $\{0\}$\\

\hline \rule[-4mm]{0mm}{1cm}
$(0,0,\frac{\tau'+1}{2})$ & $(\frac{\tau}{2},\frac{\tau}{2},0)$ &  $(\frac{\tau}{2},\frac{\tau+1}{2},\frac{1}{2})$ & $13$ & $\{0\}$ \\

\hline \rule[-4mm]{0mm}{1cm}
$(0,0,\frac{\tau'+1}{2})$ & $(\frac{\tau}{2},\frac{\tau}{2},0)$ &  $(\frac{\tau}{2},\frac{\tau+1}{2},0)$ & $10$  & $\bZ/2\bZ$ \\
\hline \rule[-4mm]{0mm}{1cm}

$(0,0,\frac{1}{2})$ & $(\frac{\tau}{2},\frac{\tau}{2},\frac{1}{2})$ &  $(\frac{\tau}{2},\frac{\tau+1}{2},\frac{1}{2})$ 
&  $27$ & $\{0\}$ \\
\hline \rule[-4mm]{0mm}{1cm}
$(0,0,\frac{1}{2})$ & $(\frac{\tau}{2},\frac{\tau}{2},\frac{1}{2})$ &  $(\frac{\tau}{2},\frac{\tau+1}{2},\frac{\tau'}{2})$ &  $20$ & $\{0\}$ \\

\hline \rule[-4mm]{0mm}{1cm}
$(0,0,\frac{1}{2})$ & $(\frac{\tau}{2},\frac{\tau}{2},\frac{\tau'}{2})$ &  $(0,\frac{1}{2},0)$ & $18$ & $\{0\}$ \\

\hline \rule[-4mm]{0mm}{1cm}
$(0,0,\frac{1}{2})$ & $(\frac{\tau}{2},\frac{\tau}{2},\frac{\tau'}{2})$ &  $(0,\frac{1}{2},\frac{\tau'}{2})$ &  $15$ & $\{0\}$\\

\hline
\end{tabular}}
\caption{Hodge numbers and fundamental group of crepant resolution $Y$ of $X/(\bZ/2\bZ)^3$}
\label{table12}
\end{table}


\begin{table}[H]
\centering
          \scalebox{0.78}{
\begin{tabular}{|c|c|c|}
\hline \rule[-4mm]{0mm}{1cm}
$\Upsilon=\text{Aut}(X)$ & $h^{1,1}(Y)=h^{2,1}(Y)$ & $\pi_1(Y)$\\
 \hline \rule[-4mm]{0mm}{1cm}
& $27$  &  $\{0\}$ \\
\hline
\end{tabular}}
\caption{Hodge number and fundamental group of crepant resolution $Y$ of $X/(\bZ/2\bZ)^4$}
\label{table14}
\end{table}

As evidence we show explicitly one computation for $\pi_1(X/\Upsilon)$.
Let $\Upsilon=\langle \alpha_X\rangle$ and $\alpha_X$ is induced by $\alpha_{A'}(\ul{z})=(z_1,z_2,z_3+\frac{1}{2})$. Then $X/\Upsilon=\bC^3/\Gamma$ where $\Gamma$ is a finite extension of $\pi_1(A')$ by $\langle H, \alpha_{A'}\rangle$. By Corollary \ref{fund-quotient} : $\pi_1(X/\Upsilon)\simeq\dfrac{\Gamma}{F_{\Gamma}}$. To compute $F_{\Gamma}$ we
use Remark \ref{remark: fixed locus alphaX} as explained in Remark \ref{computation-fund-group}, we find 
$F_{\Gamma}=\{\lambda\ol{\alpha_{A'}} \ol{s}^2 \ol{r}^2,\lambda\ol{\alpha_{A'}} \ol{s}^2 \ol{r}^2 \mid \lambda(\ul{z})=(z_1+t_1,z_2+t_2,z_3) \text{ with } t_i\in \Lambda_i\}$. Thus, we get:
$$\pi_1(X/\Upsilon)\simeq\dfrac{\Gamma}{F_{\Gamma}}=\langle \ol{r},\ol{s},\ol{\alpha_{A'}}, \Lambda_3 \mid \ol{r}^2=(\ol{\alpha_{A'}})^{-1}, \ol{s}^2=\alpha_{A'}^2\rangle. $$
We observe that $\Lambda_3'''=\dfrac{1}{2}\bZ\oplus \bZ\subset \Lambda_3$ is the maximal abelian and normal subgroup of finite index in $\dfrac{\Gamma}{F_{\Gamma}}$ and the images of $\ol{r}$ and $\ol{s}$ in $\dfrac{\Gamma/F_{\Gamma}}{\Lambda_3'''}$ define two automorphisms of order $2$ which commute, thus we obtain the following characterization of $\pi_1(X/\Upsilon)$:
\begin{center}
    \begin{tikzcd}
        0\longrightarrow \Lambda_3''' \longrightarrow \pi_1(X/\Upsilon) \longrightarrow \bZ/2\bZ \times \bZ/2\bZ \longrightarrow 0.
    \end{tikzcd}
\end{center}

 \bibliographystyle{plain}
 \bibliography{bibliography}

\begin{thebibliography}{10}

\bibitem{Ap}
D.~Arapura.
\newblock "{A}belian {V}arieties and {M}oduli.".
\newblock \url{https://www.math.purdue.edu/~arapura/preprints/abelian.pdf}.

\bibitem{BHPV}
W.~P. Barth, K.~Hulek, C.~A.~M. Peters, and A.~Van De~Ven.
\newblock {\em Compact Complex Surfaces.}
\newblock Springer-Verlag, 2004.

\bibitem{BD96}
V.~V. Batyrev and D.~I. Dais.
\newblock Strong {M}ckay correspondence, string-theoretic {H}odge numbers and
  mirror symmetry.
\newblock {\em TOPOLOGY}, 35(4):901--929, 1996.

\bibitem{Beau83b}
A.~Beauville.
\newblock Some remarks on {K}\"ahler manifolds with $c_1=0$. {C}lassification
  of algebraic and analytic manifolds.
\newblock {\em PM}, 39:1--26, 1983.

\bibitem{Br}
Ronald Brown.
\newblock {\em Topology and groupoids}.
\newblock BookSurge, LLC, Charleston, SC, third edition, 2006.

\bibitem{Car}
Henri Cartan.
\newblock Quotient d'un espace analytique par un groupe d'automorphismes.
\newblock In {\em Algebraic geometry and topology. {A} symposium in honor of
  {S}. {L}efschetz}, pages 90--102. Princeton Univ. Press, Princeton, NJ, 1957.

\bibitem{Cat23}
F.~Catanese.
\newblock {M}anifold with trivial {C}hern classes {I}:{H}yperelliptic
  {M}anifolds and a question by {S}everi.
\newblock \url{ arXiv:2206.02646}.

\bibitem{Cat15}
F.~Catanese.
\newblock Topological methods in moduli theory.
\newblock {\em Bullettin of Mathematical Sciences}, 5(3):287--449, 2015.

\bibitem{CC}
F.~Catanese and P.~Corvaja.
\newblock Teichm\"uller spaces of {G}eneralized {H}yperelliptic {M}anifolds.
\newblock In {\em Complex and symplectic geometry}, 21, pages 39--49. Springer,
  Cham, 2017.

\bibitem{CD}
F.~Catanese and A.~Demleitner.
\newblock "{H}yperelliptic threefolds with group {$\mathfrak{D}_4$}, the
  dihedral group of order $8$.".
\newblock \url {https://arxiv.org/abs/1805.01835}.

\bibitem{CD2}
F.~Catanese and A.~Demleitner.
\newblock The classification of hyperelliptic threefolds.
\newblock {\em Groups, Geometry and Dynamics}, 14(4):1447--1454, 2020.

\bibitem{CD20}
Fabrizio Catanese and Andreas Demleitner.
\newblock Rigid group actions on complex tori are projective (after {E}kedahl).
\newblock {\em Commun. Contemp. Math.}, 22(7):1950092, 15, 2020.

\bibitem{CR}
W.~Chen and Y.~Ruan.
\newblock A new cohomology theory of orbifold.
\newblock {\em Comm. Math. Phys.}, 248(1):1--31, 2004.

\bibitem{DG23}
A.~Demleitner and C.~Gleissner.
\newblock The classification of rigid hyperelliptic fourfolds.
\newblock {\em Ann. Mat. Pura Appl. (4)}, 202(3):1425--1450, 2023.

\bibitem{DW}
R.~Donagi and K.~Wendland.
\newblock On orbifolds and free fermion constructions.
\newblock {\em J. Geom. Phys.}, 59(7):942--968, 2009.

\bibitem{Fuj}
A.~Fujiki.
\newblock Finite automorphism groups of complex tori of dimension two.
\newblock {\em Res. Inst. Math. Sci.}, 24(1):1--97, 1998.

\bibitem{GHJ}
M.~Gross, D.~Huybrechts, and D.~Joyce.
\newblock {\em Calabi-{Y}au manifolds and related geometries}.
\newblock Universitext. Springer-Verlag, Berlin, 2003.
\newblock Lectures from the Summer School held in Nordfjordeid, June 2001.

\bibitem{HL19}
M.~Halenda and R.~Lutowski.
\newblock Symmetry of complex flat manifols.
\newblock \url{ arXiv:1905.11178}.

\bibitem{Hart2}
Robin Hartshorne.
\newblock {\em Ample subvarieties of algebraic varieties}, volume Vol. 156 of
  {\em Lecture Notes in Mathematics}.
\newblock Springer-Verlag, Berlin-New York, 1970.
\newblock Notes written in collaboration with C. Musili.

\bibitem{AT}
Allen Hatcher.
\newblock {\em Algebraic topology}.
\newblock Cambridge University Press, Cambridge, 2002.

\bibitem{Hu}
Daniel Huybrechts.
\newblock {\em Complex geometry}.
\newblock Universitext. Springer-Verlag, Berlin, 2005.
\newblock An introduction.

\bibitem{HK}
Atsushi Kanazawa.
\newblock Calabi-{Y}au threefolds with infinite fundamental group.
\newblock In {\em String-{M}ath 2014}, volume~93 of {\em Proc. Sympos. Pure
  Math.}, pages 335--341. Amer. Math. Soc., Providence, RI, 2016.

\bibitem{Kollar}
J.~Koll{\'a}r.
\newblock Shafarevich maps and plurigenera of algebraic varieties.
\newblock {\em Invent. Math.}, 113(1):177--215, 1993.

\bibitem{La}
Herbert Lange.
\newblock Hyperelliptic varieties.
\newblock {\em Tohoku Math. J. (2)}, 53(4):491--510, 2001.

\bibitem{Laz}
Robert Lazarsfeld.
\newblock {\em Positivity in algebraic geometry. {I}}, volume~48 of {\em
  Ergebnisse der Mathematik und ihrer Grenzgebiete. 3. Folge. A Series of
  Modern Surveys in Mathematics [Results in Mathematics and Related Areas. 3rd
  Series. A Series of Modern Surveys in Mathematics]}.
\newblock Springer-Verlag, Berlin, 2004.
\newblock Classical setting: line bundles and linear series.

\bibitem{OS}
K.~Oguiso and J.~Sakurai.
\newblock Calabi-{Y}au threefolds of quotient type.
\newblock {\em Asian J. Math.}, 5(1):43--77, 2001.

\bibitem{Og}
Keiji Oguiso.
\newblock Automorphism groups of {C}alabi-{Y}au manifolds of {P}icard number 2.
\newblock {\em J. Algebraic Geom.}, 23(4):775--795, 2014.

\bibitem{Roan96}
Shi-Shyr Roan.
\newblock Minimal resolutions of {G}orenstein orbifolds in dimension three.
\newblock {\em Topology}, 35(2):489--508, 1996.

\bibitem{V}
Claire Voisin.
\newblock {\em Hodge theory and complex algebraic geometry. {I}}, volume~76 of
  {\em Cambridge Studies in Advanced Mathematics}.
\newblock Cambridge University Press, Cambridge, english edition, 2007.
\newblock Translated from the French by Leila Schneps.

\bibitem{Wolf11}
J.~A. Wolf.
\newblock {\em Spaces of constant curvature}.
\newblock AMS Chelsea Publishing, Providence, RI, sixth edition, 2011.

\end{thebibliography}

\end{document}